\definecolor{hanblue}{rgb}{0.27, 0.42, 0.81}
\definecolor{mordantred19}{rgb}{0.68, 0.05, 0.0}
\definecolor{red}{rgb}{0.68, 0.05, 0.0}
\definecolor{green}{rgb}{0.0, 0.5, 0.0}
\newcommand{\dual}{\mathbb{R}^{n \times d}_K}
\newcommand{\Kc}{K^\circ}
\newcommand{\primal}{\mathbb{R}^{n \times d}_{\Kc}}
\newcommand{\Rnd}{\R^{n\times d}}
\newcommand{\e}{\varepsilon}
\DeclareMathOperator{\supp}{supp}
\newcommand{\R}{\mathbb{R}}
\newcommand{\N}{\mathbb{N}}
\renewcommand{\S}{\mathbb{S}}
\newcommand{\ls}{\leqslant}
\newcommand{\1}{\mathds 1}
\renewcommand{\L}{\mathcal{L}}
\newcommand{\M}{\mathcal{M}}
\newcommand{\E}{\mathcal{E}}
\newcommand{\Ha}{\mathcal{H}}
\newcommand{\pa}{\partial^\ast}
\newcommand{\pe}{\partial^e}
\newcommand{\du}{{(1)}}
\newcommand{\dz}{{(0)}}
\newcommand{\TV}{\operatorname{TV}}
\newcommand{\TD}{\operatorname{TD}}
\newcommand{\BV}{\operatorname{BV}}
\newcommand{\BD}{\operatorname{BD}}
\newcommand{\TGV}{\operatorname{TGV}}
\newcommand{\SO}{\operatorname{SO}}
\newcommand{\Id}{\operatorname{Id}}
\newcommand{\tr}{\operatorname{tr}}
\newcommand{\Per}{\operatorname{Per}}
\newcommand{\Ext}{\operatorname{Ext}}
\newcommand{\Span}{\operatorname{Span}}
\renewcommand{\div}{\operatorname{div}}
\newcommand{\dd}{\, \mathrm{d}}
\newcommand{\mres}{\mathbin{\vrule height 1.4ex depth 0pt width
0.13ex\vrule height 0.13ex depth 0pt width 1.0ex}}
\newcommand\restr[2]{{
  \left.\kern-\nulldelimiterspace
  #1 
  \vphantom{\big|} 
  \right|_{#2}
  }}
\theoremstyle{plain}
\newtheorem{thm}{Theorem}
\numberwithin{thm}{section}
\newtheorem{lemma}[thm]{Lemma}
\newtheorem{prop}[thm]{Proposition}
\newtheorem{cor}[thm]{Corollary}
\theoremstyle{definition}
\newtheorem{rem}[thm]{Remark}
\newtheorem{example}[thm]{Example}
\theoremstyle{remark}
\title{On extremal points for some vectorial total variation seminorms\footnotetext{2020 Mathematics Subject Classification: 46N10, 46A55, 26B30, 49Q20.}}\vspace{-0.3cm}
\author{Kristian Bredies\thanks{Department of Mathematics and Scientific Computing, University of Graz, 8010 Graz, Austria\newline (\texttt{kristian.bredies@uni-graz.at})},\ \ 
Jos\'e A. Iglesias\thanks{Department of Applied Mathematics, University of Twente, 7500AE Enschede, The Netherlands \newline (\texttt{jose.iglesias{@}utwente.nl})},\ \ Daniel Walter\thanks{Institut f\"ur Mathematik, Humboldt-Universit\"at zu Berlin, 10117 Berlin, Germany \newline (\texttt{daniel.walter@hu-berlin.de})}}
\date{}
\begin{document}

\maketitle

\vspace{-0.4cm}

\begin{abstract}
We consider the set of extremal points of the generalized unit ball induced by gradient total variation seminorms for vector-valued functions on bounded Euclidean domains. These are central to the understanding of sparse solutions and sparse optimization algorithms for variational problems posed among such functions. For cases in which either the domain or the target are one dimensional or the sum of the total variations of each component is used, we prove that these extremals consist of piecewise constant functions with two regions. For definitions involving more involved matrix norms and in particular spectral norms, we produce families of examples to show that the resulting set of extremal points is larger and includes piecewise constant functions with more than two regions. We also consider the total deformation induced by the symmetrized gradient, for which minimization with linear constraints appears in problems of determination of limit loads in a number of continuum mechanical models involving plasticity. For this case, we show piecewise infinitesimally rigid functions with two pieces to be extremal under mild assumptions. Finally, as an example which is not piecewise constant, we prove that unit radial vector fields are extremal for the Frobenius total variation in the plane.
\end{abstract}
\vskip .3truecm \noindent Keywords:
	extremal points, vector measures, total variation, bounded deformation

\section{Introduction}

Convex and positively one-homogeneous functionals defined on appropriate Banach spaces are widely used as regularizers in variational approaches to signal and image processing, inverse problems, optimal control as well as, more recently, in continuous formulations of some machine learning methods. This is attributed to the empirical observation that their incorporation into minimization tasks leads to solutions which are given by a linear combination of a few number of ``simple'' atoms. We refer, e.g., to the tendency of LASSO-regression to produce solutions with few nonzero entries \cite{CheDonSau98} and its infinite-dimensional counterpart, the Beurling-LASSO, which is known to produce linear combinations of few Dirac masses under non-degeneracy conditions on the measurements \cite{DuvPey15}, a result that has been extended to general regularizers in \cite{CarDel23}. 

A recently popularized approach towards understanding these ``sparsifying'' properties for a generic proper, convex and positively one-homogeneous regularizer~$\mathcal{G}$ on a Banach space~$\mathcal{U}$ is its interpretation as the Minkowski, or gauge, functional of its generalized unit ball, i.e.,
\begin{align*}
    \mathcal{G}(u) = \inf \left\{\,\lambda\;\middle\vert\; \lambda \geq 0, \quad u \in \lambda \mathcal{B} \,\right\} \quad \text{where} \quad \mathcal{B} \coloneqq \left\{\,u \in \mathcal{U}\;\middle\vert\;\mathcal{G}(u) \leq 1\,\right\}.
\end{align*}
Assuming compactness of the latter in a suitable topology, the Krein-Milman Theorem allows to identify~$\mathcal{B}$  as the closed convex hull of its extremal points
\[\Ext(\mathcal{B}) = \left\{ u \in \mathcal{B} \, \middle \vert \,  u = \lambda u_1 + (1-\lambda) u_2 \text{ for }u_1,u_2 \in \mathcal{B} \text{ and } \lambda \in [0,1] \!\implies\!\! \big( u=u_1 \text{ or } u=u_2 \big) \right\}.\] 
This geometric perspective has several applications for both, the theoretical aspects of minimization problems with convex, positively one-homogeneous regularization well as their numerical realization. For example, \emph{Convex Representer Theorems},~\cite{BoyElAt19, BreCar20}, provide sufficient conditions for the existence of minimizers constituted by finitely many extremal points associated to the regularizer while problem-tailored  \emph{Generalized Conditional Gradient methods},~\cite{BreCarFanWal24}, numerically compute minimizers by relying on iterates formed as finite conic combinations of such extremals as well as greedy updates based on the minimization of linear functionals over~$\Ext(\mathcal{B})$.

For particular choices of the regularizer~$\mathcal{G}$ and the variable space~$\mathcal{U}$, the potential impact and benefit of these results heavily rely on a precise characterization of the set of extremal points. Moreover, the size and complexity of this set is also of crucial importance: A functional giving rise to a set of extremal points which is very large and/or hard to navigate would render representer theorems less informative and extremal point-based optimization methods less tractable.

\subsection{Contribution and related work}
Motivated by these potential benefits, the study of the extremal points associated to convex regularizers has become a fruitful area of research over the last years. Without pretence of completeness, we point out for a list of convex representer theorems in particular settings as well as, e.g., for realizations of extremal point-based solution algorithms.  
However, for other cases of practical relevance, a full characterization of~$\Ext(\mathcal{B})$ is not available. The present work puts the focus on total variation energies for vector-valued functions~$u \colon \Omega \to \R^n $,~$n \geq 1$, on a bounded domain~$\Omega \subset \R^d $,~$d \geq 1$, with Lipschitz boundary. More in detail, given a matrix norm~$|\cdot|_K$ (where $K$ denotes the respective closed unit ball) as well as~$u \in \BV(\Omega;\R^n)$, the space of vector-valued functions of bounded variation \cite[Def.~3.1, Prop.~3.6]{AmbFusPal00}, consider  
\begin{equation}\label{eq:tvdef} \TV_{\!K}(u) := \sup \left\{ \int_\Omega u \cdot \div \Phi~\mathrm{d}x \,\middle\vert\,  \Phi \in C^1_c(\Omega;\Rnd), \, |\Phi(x)|_{\Kc} \leq 1 \text{ for all } x\in \Omega \right\},\end{equation}
where $|\cdot|_{\Kc}$ denotes the canonical dual norm for~$|\cdot|_K$, with closed unit ball $\Kc$. Integrating by parts and using the density of $C^1_c(\Omega;\Rnd)$ in $C^1_0(\Omega;\Rnd)$, we note that this definition ensures
\begin{equation}\label{eq:TVKSobolev}\TV_{\!K}(u)=\int_\Omega |\nabla u|_K  \dd x \quad \text{for all} \quad u \in W^{1,1}(\Omega; \R^n),\end{equation} which justifies the employed notation in terms of~$K$ and further underlines the practical interest in this type of regularizer.

This general formulation captures, both, isotropic, i.e. $|\cdot|_K$ is the Euclidean norm $|\cdot|$, and anisotropic flavors, e.g.~$|\cdot|_K=|\cdot|_1$, of total variation for scalar-valued functions as well as vector-valued analogues involving e.g. Schatten $p$-norms. While the former are a fundamental pillar in the analysis of grey-scale images, see e.g. \cite{ChaCasCreNovPoc10}, the latter have been proposed, see e.g.~\cite{GolStrCre12}, as advantageous for regularizing color images and provide a suitable setting for the approximation of vector-valued functions of bounded variation by piecewise constant ones, see \cite{BabIur23}.

For seminorms such as~\eqref{eq:tvdef}, let us recall that to retain some notion of compactness of the generalized unit ball, and thus to ensure the existence of extremal points, the natural setting is to work with the quotient space w.r.t. the subspace of functions on which they vanish. Assuming $\Omega$ is connected, this means that we should work up to addition of constant functions, i.e. we consider
\begin{equation}\label{eq:defBK}\begin{aligned}
    \mathcal{B}_K \coloneqq \left\{\, \lbrack u \rbrack \in \BV(\Omega; \R^n)/(\R^n \1_\Omega)\;\middle\vert\;\TV_K(u) \leq 1\,\right\} \quad \text{where} \quad \lbrack u \rbrack=\left\{\, u+a \1_\Omega\;\middle\vert\;a\in\R^n\,\right\}.
\end{aligned}\end{equation}
For example, in~\cite[Theorem 4.7]{BreCar20}, the authors show that the extremal points of the isotropic total variation for scalar-valued functions, i.e.~$n=1$ and~$|\cdot|$, are given, up to a constant shift, by scaled characteristic functions of non-trivial sets~$E$ which are simple, i.e.~$E$ as well as~$\Omega \setminus E$ are indecomposable. While this structure is, mutatis mutandis, retained for some (vector-valued) instances of~$\TV_K$, the main aim of the present work is to show that, in general,~$\Ext(\mathcal{B}_K)$ is larger than one would extrapolate from the scalar-valued case and its complexity strongly depends on~$|\cdot|_K$. More in detail, we provide the following results:
\begin{itemize}
    \item We give a full characterization of the set of extremal points associated to~$\TV_K$ for the case of vector-valued functions on an interval~$\Omega=(a,b)$,
    \begin{align*}
        \Ext \left( \mathcal{B}_K \right)=\left\{\,\left\lbrack Q\1_{(x,b)} \right\rbrack\;\middle\vert\;x \in \Omega,~Q \in \operatorname{Ext}(K)\,\right\} \quad \text{where} \quad  K \coloneqq \left\{\, Q \in \R^n \;\middle\vert\;|Q|_K\leq 1\,\right\}   
    \end{align*}
    as well as for the scalar-valued case induced by a general norm $|\cdot|_K$ on $\R^d$,
    \begin{align} \label{eq:introscalar}
        \Ext \left( \mathcal{B}_K \right)=\left\{\,\left\lbrack \frac{\1_E}{\TV_K(\1_E)}\right\rbrack\;\middle\vert\; E\subset \Omega \ \ \text{simple}, \ |E| \in \big(0,|\Omega|\big)\right\}
    \end{align}
    on multidimensional domains,~$n=1$ and~$d \geq 1$. The latter might seem particularly surprising at first glance since anisotropic total variation regularization tends to favor different structural properties in the reconstructions compared to its isotropic counterpart. From this perspective, the characterization in~\eqref{eq:introscalar} implies that this feature of anisotropic total variation cannot be concluded (solely) from convex representer theorems.
    \item The main part of the paper revolves around the general vector-valued case on multidimensional domains. On the one hand, we show that the characterization of~$\Ext(\mathcal{B}_K)$ can be traced back to the scalar-valued case if~$\TV_K$ is linear w.r.t. to a decomposition of~$\BV(\Omega)$ in a direct sum of~$n$ subspaces. For example, given an arbitrary norm~$|\cdot|_k$ on~$\R^d$, we obtain for \[|A|_K = \sum^n_{j=1} |A_{j}|_k\] that
    \begin{align*}
        \Ext(\mathcal{B}_K)= \left\{\,\left\lbrack e_j \frac{\1_E}{\TV_k(\1_E)}\right\rbrack\;\middle\vert\; E \subset \Omega~\text{simple}, \ |E| \in \big(0,|\Omega|\big),~j=1,\dots,n\, \right\},
    \end{align*}
    with~$\{e_j\}^n_{j=1}$ denoting the canonical basis of~$\R^n$ and $A_j$ the corresponding rows of $A$.
    On the other hand, we prove that~$\Ext(\mathcal{B}_K)$ is significantly larger if the matrix norm $|\cdot|_K$ is highly isotropic on rank-one matrices, see \eqref{eq:rankoneiso} for the exact condition. Denoting by~$\sigma_1(A) \geq \dots \geq \sigma_d (A) \geq 0  $ the singular values of~$A\in\Rnd$, these results cover, both, Schatten as well as Ky-Fan norms,
    \begin{align*}
        |A|_K=\bigg({\sum^d_{j=1} \sigma_j(A)^p} \bigg)^{1/p} \quad \text{and} \quad |A|_K= \sum^N_{j=1} \sigma_j(A)
    \end{align*}
    for~$p \in [1,+\infty)  $ and~$1\leq N \leq d$.
Explicit families of extremal points are constructed which are not multiples of indicatrices of simple sets. While this does not provide a full characterization of the extremal points, it emphasizes that the structure of elements in~$\Ext(\mathcal{B}_K)$ in the vector-valued setting critically depends on the particular choice of the matrix norm~$|\cdot|_K$ when acting on the matrix-valued Radon measure~$Du$. In particular, strict convexity of~$|\cdot|_K$ along certain kinds of variations tends to induce larger sets of extremals.
    \item Relating to mechanical problems involving plasticity and higher-order functionals such as the total generalized variation ($\TGV$, see \cite{BreKunPoc10}), it is also of interest to investigate the extremal points of the closed unit ball of the total deformation associated to a matrix norm~$|\cdot|_K$,
        \begin{equation}\label{eq:tddef}\TD_K(u) := \sup \left\{ \int_\Omega u \cdot \div \Phi \dd x \,\middle\vert\,  \Phi \in C^1_c(\Omega;\R^{d\times d}_{\mathrm{sym}}), \, |\Phi(x)|_{\Kc} \leq 1  \text{ for all } x\in \Omega \right\},\end{equation}
        which is finite for~$u \in \BD(\Omega;\R^d)$, the space of vector fields of bounded deformation \cite{Tem83, TemStr80}. In this case, we prove that functions of the form $w \1_E$ with $w \in \mathcal{A}$ give rise to extremal points of the ball
    \begin{align*}
    \left\{ [u] \in \BD(\Omega)/\mathcal{A} \;\middle\vert\; \TD_K(u) \leq \TD_K(u_H)\right\},
    \end{align*}
    where $\mathcal{A}$ is the set of infinitesimal rigid motions, defined in \eqref{eq:infrigid} below, and $|\cdot|_K$ is assumed to satisfy a certain strict convexity condition along symmetrized tensor products, cf. \eqref{eq:symrankoneconv}. This fact can be of interest for some continuum mechanical models involving plasticity, in which the norm used for $\TD_K$ corresponds to the yield criterion in stress space, a property of the materials considered.
    \item Finally, inspired by the extremality of~$(1-|x|)_+$ for the Hessian-Schatten variation proved in~\cite{AmbBreCon23}, we show that the planar vector field~$u_H$ defined on ~$\Omega=B_1(0) \subset \R^2$ by 
    \begin{align*}
    u_H(x):=\begin{cases}\frac{x}{|x|} & \text{ if } x \neq (0,0) \\ (0,0 ) & \text{ if } x = (0,0)\end{cases}
    \end{align*}
    satisfies
    \begin{align*}
        \left\lbrack \frac{u_H}{\TV_K(u_H)} \right \rbrack \in \Ext(\mathcal{B}_K) \quad  \text{where} \quad |\cdot|_K=|\cdot|_F
    \end{align*}
    denotes the Frobenius norm.
\end{itemize}

In the existing literature there are several examples of generalized conditional gradient algorithms explicitly using characterizations of extremal points for specific instances and generalizations of the total variation. These include \cite{TraWal23} for $\TV_K$ in one dimension, \cite{IglWal22} for total generalized variation also in one dimension, \cite{CasDuvPet23} for isotropic total variation in two dimensions, and \cite{CriIglWal23} for piecewise constant discretizations on two- and three-dimensional triangulations. 

Minimization of the total deformation $\TD_F$ with zero divergence in perforated domains with nonhomogeneous Dirichlet boundary conditions is considered in \cite{IglMerChaFri20} for computing the critical yield number and limit flow profile induced by solid particles settling in viscoplastic fluids. For the corresponding scalar-valued $\TV$ problem arising from anti-plane motions, in \cite{FriIglMerPoeSch17, IglMerSch20} the existence of piecewise constant solutions is proved. Particularly in \cite{IglMerSch20}, which treats the case of multiple solid components, the extremality of rescaled indicatrices of simple sets is also directly used. In the axisymmetric case with in-plane motions leading to $\Omega \subset \R^2$ and specific particle geometries, vector fields with restrictions of the form $x=(x_1, x_2) \mapsto (-x_2, x_1)/|x|$ can be numerically observed when the solid particle has the shape of a square with a corner pointing to the direction of gravity. We are not able to prove extremality in this case, but discuss some partial results in Remark \ref{rem:noBDhedgehog}.

\subsection{Outline} \label{subsec:outline}
After summarizing some preliminaries as well as the necessary notation in Section~\ref{sec:notation}, we start by characterizing the extremal points for the~$\TV_K$-functional in the particular cases of vector-fields on an interval as well as scalar-valued functions of total variation on domains in multiple dimensions, see Section~\ref{section:onescalar}. Subsequently, in Section~\ref{sec:additive}, we show that the structure of the latter is retained in the vector-valued setting if the energy~$\TV_K$ is additive w.r.t to some decomposition of~$\BV(\Omega;\R^n)$ into a direct sum of subspaces. In Section~\ref{sec:gennorm}, we observe that the set of extremal points associated to~$\TV_K$ and a rotationally invariant~$|\cdot|_K$, e.g. any spectral norm, is significantly larger than what we would expect from the scalar-valued case, and that in particular, the set~$\mathcal{B}_K$ contains families of extremal points attaining more than two values. Moreover, still in Section~\ref{sec:gennorm}, we also prove extremality with respect to $\TD_K$ of most functions made by gluing two infinitesimal rigid motion along the reduced boundary of a simple set. Finally, in Section \ref{sec:hedgehog}, we prove that the unit radial vector field on the unit ball~$\Omega=B(0,1) \subset \R^2$ (we denote by $B(x,r)$ the Euclidean ball of radius $r >0$ centered at $x \in \R^d$) corresponds to an extremal point for $\TV_K$, if~$|\cdot|_K$ is given by the Frobenius norm.
\section{Notation} \label{sec:notation}
\paragraph*{Matrix norms.}
The central notion in this article is the total variation for locally integrable functions $u:\Omega \to \R^n$ on $\Omega \subset \R^d$, defined as in \eqref{eq:tvdef}. In it, we have used a dual pair of matrix norms $|\cdot|_{K}$ and $|\cdot|_{\Kc}$, meaning that for all for $A \in \Rnd$ we have
\begin{equation}\label{eq:Kduality}\begin{aligned}|A|_{\Kc} &= \sup \left\{ \tr\big(A^\top B\big)\,\middle\vert\, B \in \Rnd, \ |B|_K \ls 1\right\},\text{ and} \\ |A|_K &= \sup \left\{ \tr\big(A^\top B\big)\,\middle\vert\, B \in \Rnd, \ |B|_{\Kc} \ls 1\right\}.
\end{aligned}\end{equation}
In this context, $K$ always denotes the closed unit ball with respect to the norm $|\cdot|_K$, that is the symmetric convex set
\[K = \left\{ A \in \Rnd \,\middle\vert\, |A|_K \leq 1 \right\},\]
which indeed makes $|\cdot|_K$ the Minkowski or gauge functional of $K$, that is 
\[|A|_K = \inf \{ r \geq 0 \,\vert\, M \in rK \}.\]

The set $\Kc$ is then the polar set of $K \subset \Rnd$ with respect to the Frobenius inner product, that is
\[\Kc := \left\{A \in \Rnd \,\middle\vert\, \tr\big(A^\top B\big) \leq 1 \text{ for all }B \in K\right\}.\]
This notation is chosen to be consistent with it being the unit ball for the dual norm \eqref{eq:Kduality}, that is
\[\Kc = \left\{ B \in \Rnd \,\middle\vert\, |B|_{\Kc} \leq 1 \right\}.\]
An easy to interpret family of matrix norms which we use in the sequel is that given by
\begin{equation}\label{eq:KsKvNorm}|A|_K = \Big| \big(|A_1|_{K_s},\ldots,|A_n|_{K_s}\big)^\top\Big|_{K_v},\end{equation}
defined in terms of a ``space'' ball $K_s \subset \R^d$ acting on the rows $A_i$ of $A$ (that is, the distributional gradients of each component of $u$ when using $\TV_K$) and a ``value/vector ball'' $K_v \subset \R^n$. In this situation, one can write the dual norm to $|\cdot|_K$ above as 
\[|A|_{K^\circ} = \Big| \big(|A_1|_{K^\circ_s},\ldots,|A_n|_{K^\circ_s}\big)^\top\Big|_{K^\circ_v}.\]
To see this, notice that in the duality formula
\begin{equation}\label{eq:polarset}|A|_{\Kc} = \sup \left\{ \tr\big(A^\top B\big)\,\middle\vert\, B \in \Rnd, \ |B|_K \ls 1\right\}, \text{ we have }\tr\big(A^\top B\big)=\sum_{i=1}^n A_i \cdot B_i,\end{equation}
which allows us to first find a vector in $\R^n$ to realize the outer duality between $|\cdot|_{K_v}$ and $|\cdot|_{K^\circ_v}$, and extend to $\R^d$ the functionals that assign the value of the components of this vector to each of the rows of $A$, realizing the inner duality between $|\cdot|_{K_s}$ and $|\cdot|_{K^\circ_s}$ for each row. In an analogous fashion, one can also consider instead
\begin{equation}\label{eq:KvKsNorm}|A|_K = \Big| \big(\big|A^1\big|_{K_v},\ldots,\big|A^d\big|_{K_v}\big)^\top\Big|_{K_s},\end{equation}
where instead the ``values'' ball $K_v \subset \R^n$ acts first on the columns $A^i$ of $A$. The dual norm becomes then
\[|A|_{K^\circ} = \Big| \big(|A^1|_{K^\circ_v},\ldots,|A^d|_{K^\circ_v}\big)^\top\Big|_{K^\circ_s},\]
as can be readily seen by using $\tr(A^\top B) = \tr(A B^\top)$ in \eqref{eq:polarset}.

For simplicity we single out two particular cases, denoting $|\cdot|$ for the Euclidean norm of vectors and $|\cdot|_F$ for the Frobenius norm of matrices, defined by $|A|^2_F = \tr\big(A^\top A\big)$ for any $A \in \R^{n \times d}$.

Often we will make use of rank-one matrices, which can be written as tensor products $b \otimes a := b a^\top \in \R^{n \times d}$ generated by $a \in \R^d$ and $b \in \R^n$, as well as their symmetrized counterparts given by $a \odot b := \frac12 ( a \otimes b + b \otimes a )$ for $a,b \in \R^d$. We further note that for rank-one matrices, norms of the form \eqref{eq:KsKvNorm} can be computed as
\[|b \otimes a|_K = \Big| \big(b_1 |a|_{K_s},\ldots,b_n|a|_{K_s}\big)^\top\Big|_{K_v} = |b|_{K_v} |a|_{K_s},\]
which in turn equals the value obtained for norms of the type \eqref{eq:KvKsNorm}.

\paragraph*{Quotient spaces.}
Let us recall that to retain compactness of the unit ball for seminorms such as~$\TV_K$ and $\TD_K$, one needs to work with the quotient space with respect to the subspace of functions on which they vanish. Assuming that $\Omega$ is connected, this means that for $\TV_K$ we should work up to addition of constant functions, in $\BV(\Omega; \R^n)/(\R^n \1_\Omega)$. For $\TD_K$ the kernel is larger and consists of all infinitesimal rigid motions, meaning that we should work in $\BD(\Omega)/\mathcal{A}$, for 
\begin{equation}\label{eq:infrigid}\mathcal{A} := \left\{ v:\Omega \to \R^d \,\middle\vert\, v(x) = Ax + b \ \text{ for } A \in \R^{d\times d} \ \text{ with }A+A^\top = 0,\,\text{ and } b \in \R^d\right\}.\end{equation}
In both cases, we use the notation $[\cdot]$ to denote equivalence classes in a quotient space.

\paragraph*{Vector-valued measures.}
It will also be useful to separate the effect of the derivative and consider anisotropic norms on vector-valued measures. In the following we identify the dual of the finite dimensional space~$\primal=(\Rnd, {|\cdot|_{\Kc}})$ with $\dual = (\Rnd, |\cdot|_{K})$ via the Frobenius inner product $A \cdot B=\tr(A^\top B)$. Now, let~$C_0(\Omega; \primal)$ denote the set of~$\primal$-valued continuous functions on~$\Omega$ which vanish at its boundary. Together with the canonical maximum norm, i.e.,
\begin{align*}
    \|\psi\|_{C,\primal}=\max_{x \in \Omega} |\psi(x)|_{\Kc} \quad \text{for all} \quad \psi \in C_0(\Omega; \Rnd)
\end{align*}
these form a separable Banach space. By Singer's representation theorem \cite[pp.~182]{DieUhl77}, we identify its dual space with~$M(\Omega;\dual)$, the space of~$\dual$-valued vector measures with finite total variation. The corresponding duality pairing is given by
\begin{align*}
    \langle \psi, \mu \rangle= \int_\Omega \psi \cdot \mathrm{d} \mu= \int_\Omega \tr\left(\psi(x)^\top \frac{\mu}{|\mu|_K}(x)\right)~\mathrm{d}|\mu|_K(x),  
\end{align*}
for all~$\psi \in C_0(\Omega; \primal)$,~$\mu \in M(\Omega; \dual)$,
where~$|\mu|_K$ denotes the total variation measure of $\mu$ with respect to $|\cdot|_K$, see, e.g., \cite[Def.~1.4]{AmbFusPal00}), and the notation $\mu/|\mu|_K$ stands for the Radon-Nikod\'ym derivative of $\mu$ with respect to $|\mu|_K$, that is, $\mu/|\mu|_K$ is the polar of $\mu$. The latter satisfies~$|\mu/|\mu|_K(x)|_K=1$ for~$|\mu|_K$-a.e.~$x \in \Omega$. Equipping~$M(\Omega;\dual)$ with the canonical dual norm
\begin{align*}
    \|\mu\|_{M,\dual}= \sup_{\|\psi\|_{C,\primal}\leq 1} \int_\Omega \tr\left(\psi(x)^\top \frac{\mu}{|\mu|_K}(x)\right)~\mathrm{d}|\mu|_K(x) \quad \text{for all} \quad \mu \in M(\Omega;\dual),
\end{align*}
makes it a Banach space and we have $\|\mu\|_{M,\dual} = |\mu|_K(\Omega)$. To reflect the above mentioned special cases, we use the notation~$|\mu|$ and~$|\mu|_F$ for the total variation measure of~$\mu \in M(\Omega; \R^n)$ w.r.t the Euclidean norm~$|\cdot|$ and of~$\mu \in M(\Omega; \R^{n\times d})$ w.r.t the Frobenius norm~$|\cdot|_F$, respectively.

Finally, for a function~$u \colon \Omega \to \R^n$ of bounded variation (bounded deformation, respectively) its (symmetrized) distributional derivative is a finite Radon measure and we have~$\TV_K (u)=|Du|_K(\Omega)$ ($\TD_K (u)=|\E u|_K(\Omega)$).

\paragraph*{Subsets, relative perimeter, and piecewise functions.}
Throughout the paper, we will work extensively with Borel sets $O \subset \Omega \subset \R^d$, the collection of which we denote by $\mathfrak{B}(\Omega)$. For such a set, we denote by $|O|$ its Lebesgue measure, that is $|O| = \L^d(O)$. Further, denoting its characteristic function in $\Omega$ as $\1_E: \Omega \to \{0,1\}$, we define $\Per(E,\Omega)$ to be its isotropic perimeter in $\Omega$, given by
\[\Per(E,\Omega) := \TV(\1_E),\]
where $\TV$ denotes the particular instance of \eqref{eq:tvdef} in which $n=1$ and $K$ is the closed unit ball of the Euclidean norm $|\cdot|$ in $\R^d$. Whenever $\Per(E, \Omega) < +\infty$ we say that $E$ is a set of finite perimeter $E$ in $\Omega$, in which case $\Per(E,\Omega) = |D\1_E|(\Omega)$.

There are several notions of boundary which are relevant for sets of finite perimeter. The topological boundary is not a priori useful, since the definition of perimeter is invariant to modifications of Lebesgue measure zero. On the other hand, for the support $\supp D\1_E$ of the Radon measure $D\1_E$, defined to be the intersection of all closed sets $F$ such that $|D\1_E|(F)=|D\1_E|(\Omega)$, we have \cite[Prop.~12.19]{Mag12} that 
\[\supp D\1_E = \left\{ x \in \Omega \,\middle\vert\, 0 < \frac{|E \cap B(x,r)|}{|B(x,r)|} < 1 \text{ for all }r>0\right\},\]
and we may take a representative $\widetilde{E}$ such that $|\widetilde{E} \Delta E| = 0$ and $\partial \widetilde{E} \cap \Omega = \supp D\1_E$. Related to these considerations are the points of density $s \in [0,1]$ with respect to the Lebesgue measure, denoted as
\begin{equation}E^{(s)}:= \left\{x \in \R^d \,\middle\vert\, \lim_{r \to 0^+} \frac{|E \cap B(x,r)|}{|B(x,r)|}=s\right\},\end{equation}
including the special cases of the measure-theoretic exterior $E^{(0)}$ and measure-theoretic interior $E^{(1)}$. Finally, the reduced boundary $\pa E \subset \Omega$ is defined to be the set of points $x \in \supp D\1_E$ such that
\begin{equation}\lim_{r \to 0}\frac{D\1_E(B(x,r))}{|D\1_E|(B(x,r))} \text{ exists and belongs to }\S^{d-1},\end{equation}
where the limit is referred to as the measure-theoretic (inner) normal to $E$ at $x \in \pa E$, and is denoted as $\nu_E(x) \in \S^{d-1}$. By the de Giorgi structure theorem \cite[Thm.~15.9]{Mag12} and the Federer theorem \cite[Thm.~16.2]{Mag12} we have that
\begin{equation}\label{eq:D1E-Ha}D\1_E = \nu_E \Ha^{d-1}\mres \pa E, \quad\text{and }\ \Ha^{d-1}\big( E^{(1/2)} \setminus \pa E \big) = 0,\end{equation}
implying in particular that $\Per(E,\Omega) := \Ha^{d-1}(\pa E)$. 

We remark that even though all our results are stated for bounded domains $\Omega$, we have chosen to keep the second argument of the perimeter since, unlike for functions $u:\Omega \to \R^n$ and $\TV_K(u)$, the domain $\Omega$ cannot be inferred from specifying only the subset $E$, and the notation $\Per(E)$ is commonly used in the literature to mean $\Per(E,\R^d)$. The above definition of perimeter in $E$ is extended to the relative perimeter of $E$ in a Borel set $F \subset \Omega$ as
\[\Per(E,F) := |D\1_E|(F) = \Ha^{d-1}(\pa E \cap F).\]
In the sequel, quantities of the form $\TV_K(\1_E) = |D\1_E|_K(\Omega)$ for other norms $|\cdot|_K$ on $\R^d$ will also make an appearance, but we reserve the perimeter notation for the standard isotropic version above.

For a finite perimeter set $E$ in $\Omega$ we further say that it is indecomposable if, whenever
\[E = E_1 \cup E_2 \ \text{ with }|E_1 \cap E_2| = 0\ \text{ and }\Per(E,\Omega) = \Per(E_1,\Omega) + \Per(E_2,\Omega),\]
then necessarily $|E_1|=0$ or $|E_2|=0$. Further, $E$ is called simple if both $E$ and $\Omega \setminus E$ are indecomposable. 

For a function $u \in \BV(\Omega; \R^n)$, we say that it is piecewise constant \cite[Def.~4.1]{AmbFusPal00} if there is a partition of $\Omega$ into subsets $\{E_i\}_{i =1}^{\infty}$, and vectors $b_i \in \R^n$ for which
\[\sum_{i=1}^{\infty}\Per(E, \Omega) < + \infty \quad\text{and}\quad u = \sum_{i=1}^{\infty} b_i \1_{E_i}.\]
Similarly, we say that $u \in \BD(\Omega)$ is piecewise infinitesimally rigid if it can be written as 
\[u = \sum_{i=1}^{\infty} v_i \1_{E_i} \quad\text{for }v_i \in \mathcal{A}\]
with the same (Caccioppoli) type of partition $\{E_i\}_{i=1}^{\infty}$. Moreover, we remark that we will often write equalities of functions defined on $\Omega$ to mean their equivalence, that is, equality $\L^d-$almost everywhere.

\section{The one-dimensional and scalar-valued cases} \label{section:onescalar}
We start by giving a precise characterization of the extremal points of the unit ball
\[ \mathcal{B}_K=\left\{ \lbrack u \rbrack \in \BV(\Omega; \R^n)/(\R^n \1_\Omega) \,\middle\vert\, \TV_K(u) \leq 1 \right\}\]
associated to~$\TV_K$, for the particular cases of vector-valued functions on an interval, i.e.~$d=1$ and~$n \geq 1$, as well as scalar-valued functions on domains in multiple dimensions,~$d \geq 1$ and~$n=1$.  

\subsection{The one-dimensional case} \label{subsec:onedim}
Loosely speaking, if~$\Omega=(0,T) \subset \R$ is an interval, we can rely on taking primitives of vector measures~$\mu \in M(\Omega;\dual)$ in order to obtain~$\Ext(\mathcal{B}_K)$.   
More in detail, consider the linear mapping
\begin{align} \label{defL}
    L \colon \BV(\Omega; \R^n)/(\R^n \1_\Omega)  \to M(\Omega;\dual), \quad [u] \mapsto Du,
\end{align}
where $D$ denotes the distributional derivative and the quotient space~$\BV(\Omega; \R^n)/(\R^n \1_\Omega)$ is equipped with the canonical norm
\begin{align*}
    \|[u]\|_{\BV / \R^n}= \inf_{c \in \R^n} \left \lbrack \|u-c\1_\Omega\|_{L^1}+ \TV(u) \right \rbrack= \inf_{c \in \R^n} \left \lbrack \|u-c\1_\Omega\|_{L^1}+ \|Du\|_{M,\dual} \right \rbrack.
\end{align*}
\begin{lemma}
Assume that~$d=1$. The linear mapping~$L$ from~\eqref{defL} is continuous and bijective.
\end{lemma}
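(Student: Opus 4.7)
The plan is to separate continuity, injectivity, and surjectivity, each of which is essentially a reduction to a classical fact about one-dimensional $\BV$ functions.

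For continuity, I would observe that $\|L[u]\|_{M,\dual} = |Du|_K(\Omega) = \TV_K(u)$ by the definition of $\TV_K$ and of the total variation of a $\dual$-valued measure. Since we are on a finite-dimensional space $\R^n = \R^{n\times 1}$, the norm $|\cdot|_K$ is equivalent to the Euclidean norm, so there is $c>0$ with $\TV_K(u) \leq c \TV(u)$. For any representative $u + a\1_\Omega$ with $a \in \R^n$ one has $\TV(u) = \TV(u+a\1_\Omega) \leq \|u+a\1_\Omega\|_{L^1}+\TV(u+a\1_\Omega)$, and taking the infimum over $a$ yields $\TV_K(u)\leq c\,\|[u]\|_{\BV/\R^n}$, which is the desired bound on $L$.

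For injectivity, if $L[u] = Du = 0$ as an $\R^n$-valued measure, then each component of $u$ has vanishing distributional derivative and is therefore constant $\L^1$-a.e. on the connected open interval $\Omega$; hence $u \in \R^n \1_\Omega$ and $[u] = 0$.

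Surjectivity is where the one-dimensionality is genuinely used. Given $\mu \in M(\Omega;\dual)$, which as a set is just $M((0,T);\R^n)$, I define the primitive $u:(0,T)\to\R^n$ componentwise by $u(x) := \mu((0,x))$ (any of the standard choices of endpoint works up to countably many jumps, hence up to a Lebesgue negligible modification). Then $u$ is bounded and of bounded variation on $(0,T)$ with $\|u\|_{L^\infty} \leq |\mu|(\Omega)<+\infty$, so in particular $u \in L^1(\Omega;\R^n)$, and a standard integration-by-parts computation (or the classical identification $\BV(0,T)\simeq \R \oplus M(0,T)$ via primitives) shows that its distributional derivative equals $\mu$, so $L[u]=\mu$.

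The only point requiring mild care is the last one — making sure the chosen pointwise representative of the primitive indeed has $Du = \mu$ as distributions and lies in $\BV(\Omega;\R^n)$ — but this is classical one-dimensional $\BV$ theory and presents no real obstacle; the equivalence of $|\cdot|_K$ with $|\cdot|$ on the finite-dimensional fiber $\R^n$ makes the choice of norm on the target space irrelevant at the level of sets, affecting only the constant appearing in the continuity estimate.
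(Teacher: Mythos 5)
Your proof is correct and follows essentially the same route as the paper: surjectivity via the primitive $u(x)=\mu((0,x))$, injectivity from the fact that a vanishing distributional derivative on an interval forces constancy, and continuity from bounding $\|Du\|_{M,\dual}$ by the quotient norm (the paper gets constant $1$ directly since its quotient norm already uses $\|Du\|_{M,\dual}$, whereas your norm-equivalence constant is a harmless variant). The extra care you take with the representative of the primitive is fine but not a substantive difference.
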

\begin{proof}
Given~$\mu \in M(\Omega;\dual)$, we readily see that the function~$u(t)=\mu((0,t))$, defined in an a.e. sense, satisfies~$Du=\mu$. Hence,~$L$ is surjective. In order to show that~$L$ is also injective, let~$[u_1],[u_2] \in\BV(\Omega; \R^n)/(\R^n \1_\Omega)$ satisfy~$L([u_1])=L([u_2])$. Then we have~$Du_1=Du_2$ and thus~$u_1=u_2+b$, for some~$b\in\R^n$, hence $[u_1] = [u_2]$ by definition of the equivalence class. The continuity of~$L$ follows immediately noting that
\[\|L([u])\|_{M,\dual}=\|Du\|_{M,\dual} \leq \|[u]\|_{\BV / \R^n} \quad \text{for all} \quad [u] \in \BV(\Omega; \R^n)/(\R^n \1_\Omega).\qedhere\]
\end{proof}
In particular, we observe that
\begin{align*}
    L^{-1} \left( \left\{ \mu \in M(\Omega;\dual) \,\middle\vert\, \|\mu\|_{M,\dual} \leq 1 \right\}\right)= \left\{ \lbrack u \rbrack \in \BV(\Omega; \R^n)/(\R^n \1_\Omega) \,\middle\vert\, \TV_K(u) \leq 1 \right\}
\end{align*}
and thus
\begin{align*}
    & L^{-1} \left( \Ext \left( \left\{ \mu \in M(\Omega;\dual) \,\middle\vert\, \|\mu\|_{M,\dual} \leq 1 \right\}\right)\right) \\ & \quad = \Ext \big( \left\{ \lbrack u \rbrack \in \BV(\Omega; \R^n)/(\R^n \1_\Omega) \,\middle\vert\, \TV_K(u) \leq 1 \right\} \big)
\end{align*}
according to~\cite[Lemma~3.2]{BreCar20}. As a consequence, it suffices to characterize the extremal points of the unit ball in~$M(\Omega;\dual)$.
\begin{lemma} \label{lem:charvecmeas}
There holds
\begin{align*}
    \Ext \left( \left\{ \mu \in M(\Omega;\dual) \,\middle\vert\, \|\mu\|_{M,\dual} \leq 1 \right\}\right)= \left\{\,b \delta_t\,\middle\vert\,t \in \Omega,~b \in \Ext(K)\,\right\}.
\end{align*}
\end{lemma}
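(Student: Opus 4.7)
The plan is to prove the two inclusions separately. For $\supseteq$, I would fix $t \in \Omega$ and $b \in \Ext(K)$ (so $|b|_K = 1$) and assume $b\delta_t = \lambda \mu_1 + (1-\lambda)\mu_2$ with $\mu_i$ in the unit ball of $M(\Omega; \dual)$ and $\lambda \in (0,1)$. I would decompose each $\mu_i = a_i\delta_t + \nu_i$ into its atomic part at $t$ and a remainder with $\nu_i(\{t\}) = 0$; matching the atoms gives $b = \lambda a_1 + (1-\lambda)a_2$, while matching the rest forces $\lambda \nu_1 + (1-\lambda)\nu_2 = 0$. Since $a_i\delta_t$ and $\nu_i$ are mutually singular, one has $\|\mu_i\|_{M,\dual} = |a_i|_K + \|\nu_i\|_{M,\dual}$. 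Averaging with weights $\lambda$ and $1-\lambda$ and combining with $\lambda|a_1|_K + (1-\lambda)|a_2|_K \geq |b|_K = 1$, the unit-ball constraint forces $\nu_1 = \nu_2 = 0$ and $|a_1|_K = |a_2|_K = 1$; extremality of $b$ in $K$ then yields $a_1 = a_2 = b$, so $\mu_1 = \mu_2 = b\delta_t$.

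For the reverse inclusion $\subseteq$, I would take an extremal point $\mu$ of the unit ball and argue in three steps. First, $\|\mu\|_{M,\dual} = 1$: the zero measure is not extremal since $0 = \tfrac{1}{2}b\delta_{t_0} + \tfrac{1}{2}(-b\delta_{t_0})$ for any nonzero $b \in K$ and $t_0 \in \Omega$, and if $\mu \neq 0$ with $\|\mu\|_{M,\dual} < 1$ then $\mu = \tfrac{1}{2}(1+\varepsilon)\mu + \tfrac{1}{2}(1-\varepsilon)\mu$ for small $\varepsilon > 0$ exhibits $\mu$ as a non-trivial midpoint. Second, $|\mu|_K$ must be concentrated on a single point: otherwise $\supp |\mu|_K$ contains two points $t_1 \neq t_2$, which I would separate by disjoint open balls to produce a Borel partition $\Omega = A \sqcup B$ with $\lambda := |\mu|_K(A) \in (0,1)$ and $|\mu|_K(B) = 1-\lambda \in (0,1)$. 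The rescaled pieces $\mu_1 := (\mu\mres A)/\lambda$ and $\mu_2 := (\mu\mres B)/(1-\lambda)$ then lie in the unit ball, are distinct (disjoint supports), and satisfy $\mu = \lambda \mu_1 + (1-\lambda)\mu_2$, contradicting extremality. Third, writing $\mu = b\delta_t$ with $|b|_K = 1$, if $b \notin \Ext(K)$ I would use $b = \lambda b_1 + (1-\lambda)b_2$ inside $K$ with $b_1 \neq b_2$ to obtain $\mu = \lambda b_1\delta_t + (1-\lambda)b_2\delta_t$, again violating extremality.

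The one slightly delicate ingredient is the additivity $\|\nu + \eta\|_{M,\dual} = \|\nu\|_{M,\dual} + \|\eta\|_{M,\dual}$ for vector-valued measures with mutually singular supports, which follows from the identity $\|\mu\mres E\|_{M,\dual} = |\mu|_K(E)$ valid for any Borel $E \subset \Omega$, combined with disjointness of the supports. Everything else is routine bookkeeping separating atomic from non-atomic parts of the candidate decomposition and invoking the finite-dimensional definition of $\Ext(K)$.
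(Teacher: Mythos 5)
Your proof is correct. The inclusion $\subseteq$ follows the paper's argument essentially verbatim: normalization to $\|\mu\|_{M,\dual}=1$, a restriction-and-rescaling decomposition $\mu = \lambda\,\frac{\mu\mres A}{\lambda} + (1-\lambda)\,\frac{\mu\mres(\Omega\setminus A)}{1-\lambda}$ when the mass is not concentrated at one point, and finally splitting $b$ inside $K$ if $b\notin\Ext(K)$. The $\supseteq$ inclusion is where you take a genuinely different route. The paper passes to the scalar measures $|\mu_1|_K,|\mu_2|_K$, shows that $\gamma=(1-\lambda)|\mu_1|_K+\lambda|\mu_2|_K$ dominates $\delta_t$ and has unit mass, hence $\gamma=\delta_t$, and then invokes the known characterization $\Ext(\mathcal{P})=\{\delta_t \st t\in\Omega\}\cup\{0\}$ of the extremal points of the set of nonnegative subprobability measures (cited from \cite{BreCar20}) to conclude $|\mu_1|_K=|\mu_2|_K=\delta_t$, so that $\mu_i=b_i\delta_t$. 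You instead decompose each $\mu_i$ into its atom at $t$ plus a remainder $\nu_i$ with $\nu_i(\{t\})=0$, and kill the remainders by the additivity $\|\mu_i\|_{M,\dual}=|a_i|_K+\|\nu_i\|_{M,\dual}$ over mutually singular pieces together with the unit-ball constraint; the conclusion then follows from extremality of $b$ in $K$ alone. Your version is more self-contained — it does not need the auxiliary extremality result for scalar measures, effectively re-proving the relevant special case inline via mass bookkeeping — while the paper's version makes the reduction from vector to scalar measures explicit and reuses an existing lemma. Two cosmetic points: the two disjoint balls around $t_1\neq t_2$ do not themselves form the partition (you take one ball and its complement, each of positive $|\mu|_K$-mass), and in the atom-matching step the equality $|a_1|_K=|a_2|_K=1$ is not actually needed — $a_1,a_2\in K$ already suffices to apply extremality of $b$; neither affects correctness.
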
 
\begin{proof}
We start by noting that both,~$|\cdot|_K$ and~$\|\cdot\|_{M,\dual}$, are norms. Hence, vectors~$b \in \Ext(K)$ and measures 
\begin{align*}
  \mu \in \Ext \left( \left\{ \mu \in M(\Omega;\dual) \,\middle\vert\, \|\mu\|_{M,\dual} \leq 1 \right\}\right)
\end{align*}
necessarily satisfy~$|b|_K=1$ and~$\|\mu\|_{M,\dual} = 1$ respectively. 

Now, we first show that every measure of the form~$\mu=  b \delta_t$,~$b\in \Ext(K)$,~$t \in \Omega = (0,T)$, is an extremal point.
For this purpose, define the following set of scalar-valued measures:
\begin{align*}
    \mathcal{P} \coloneqq \left\{\,\nu \in M(\Omega)\,\middle\vert\,\nu \geq 0,~\nu(\Omega)\leq 1\,\right\}
\end{align*}
where~$ \nu \geq 0$ is understood in the canonical sense, i.e., by testing against nonnegative functions. 
Following the arguments of~\cite[Proposition~4.1]{BreCar20}, we see that~$\Ext ({\mathcal{P}})=\{\,\delta_t\,\vert\,t \in \Omega\,\} \cup \{0\}$.
By definition, further note that
\begin{align*}
   \|\mu\|_{M,\dual}= |\mu|_K(\Omega)=|b|_K = 1
\end{align*}
since~$b \in \Ext(K)$.

We now argue by contradiction: If~$\mu=  b \delta_t$,~$b\in \Ext(K)$,~$t \in \Omega = (0,T)$ is not extremal, then there are~$\mu_1,\mu_2 \in M(\Omega;\dual)$,~$\mu_1\neq \mu_2$, as well as~$\lambda\in(0,1)$ with
\begin{align*}
 \mu=(1-\lambda)\mu_1+\lambda \mu_2, \quad \|\mu_1\|_{M,\dual} \leq 1,~\|\mu_2\|_{M,\dual} \leq 1.
\end{align*}
Since $|\mu|_K(\Omega)=1$, we may also assume  that~$|\mu_1|_K(\Omega)=|\mu_2|_K(\Omega)=1$. This implies
\begin{align} \label{eq:probabilityhelp}
    0 \leq |\mu|_K(E)=|(1-\lambda)\mu_1+\lambda \mu_2|_K(E) \leq (1-\lambda)|\mu_1|_K(E)+\lambda |\mu_2|_K(E) \leq 1.
\end{align}
for all Borel sets~$E \subset \Omega$. Inserting~$E=\Omega$ and noting that~$|\mu|_K(\Omega)=1$, we conclude that~$\gamma \coloneqq (1-\lambda)|\mu_1|_K+\lambda |\mu_2|_K$  satisfies~$\gamma \geq 0$ and~$\gamma(\Omega)=1$. Due to~$\delta_t(E) \leq \gamma(E)$ for all Borel sets $E \subset \Omega$, we then get that $\gamma=\delta_t$.
Since~$\delta_t$ is an extremal point of~$\mathcal{P}$ and~$|\mu_1|_K,|\mu_2|_K \in \mathcal{P}$, this yields~$|\mu_1|_K=|\mu_2|_K=\delta_t$. Thus, there are~$b_1,b_2 \in K$ with
\begin{align*}
    \mu_1=  b_1 \delta_t, \quad \mu_2= b_2 \delta_t \quad \text{as well as} \quad \mu= ((1-\lambda)b_1+\lambda b_2)\delta_t.
\end{align*}
Since~$b \in \operatorname{Ext}(K)$, we finally conclude~$b_1=b_2=b$ and thus also~$\mu_1=\mu_2=\mu$, yielding a contradiction.

Next, we show that there are not more extremal points besides the mentioned Dirac delta functionals. For this purpose, first assume that~$\mu$ is extremal but its support is not a singleton. Then we have~$|\mu|_K(\Omega)=1$ and there is a Borel set~$E$ with~$0<|\mu|_K(E)<1$. Setting
\begin{align*}
    \lambda= |\mu|_K (\Omega \setminus E) \in (0,1), \quad \mu_1= \frac{1}{1-\lambda} \mu \mres E \quad \text{as well as} \quad  \mu_2= \frac{1}{\lambda} \mu \mres (\Omega \setminus E), 
\end{align*}
we note that~$|\mu_1|_K=|\mu_2|_K=1$,~$\mu_1 \neq \mu_2$ and
\begin{align*}
    \mu=  \mu \mres E+ \mu \mres (\Omega \setminus E)= (1-\lambda) \mu_1 +\lambda \mu_2 .
    \end{align*}
This contradicts the extremality of~$\mu$. Hence, every extremal point is of the form~$\mu=b \delta_t$ for some~$b\in K$. Finally, if~$b \in K \setminus \Ext(K)$, there are~$b_1,b_2 \in K$,~$b_1\neq b_2 $, and~$\lambda\in (0,1)$ with~$b=(1-\lambda)b_1+\lambda b_2$. Consequently,~$\mu$ is again not extremal.
\end{proof}

\begin{thm} \label{thm:charinone}
Let~$d=1$ and $\Omega=(0,T)$. Then there holds
\begin{align*}
    \Ext \left( \mathcal{B}_K \right)=\left\{\,\left\lbrack b\1_{(t,T)} \right\rbrack\;\middle\vert\;t \in (0,T),~b \in \operatorname{Ext}(K)\,\right\}.
\end{align*}
\end{thm}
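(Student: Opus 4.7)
The plan is to chain together the two preparatory results already established in this subsection, namely the fact that the map $L$ from \eqref{defL} is a continuous linear bijection sending $\mathcal{B}_K$ onto the closed unit ball of $M(\Omega;\dual)$, and the characterization of the extremal points of the latter in Lemma \ref{lem:charvecmeas}. Since $L^{-1}$ is an affine bijection between two unit balls, the same application of \cite[Lemma~3.2]{BreCar20} that has already been invoked above gives
\[ \Ext(\mathcal{B}_K) \;=\; L^{-1}\Bigl( \Ext\bigl\{\mu \in M(\Omega;\dual) \,\bigst\, \|\mu\|_{M,\dual} \leq 1\bigr\}\Bigr) \;=\; L^{-1}\bigl(\{\,b\delta_t \st t \in \Omega,\ b \in \Ext(K)\,\}\bigr),\]
so that the theorem reduces to identifying $L^{-1}(b\delta_t)$ for each $t \in (0,T)$ and $b \in \Ext(K)$.

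For this final step, I would proceed by verifying directly that the candidate representative $u(s) := b\,\1_{(t,T)}(s)$ satisfies $Du = b\delta_t$. Indeed, for every test function $\phi \in C_c^1\bigl((0,T);\R^n\bigr)$ integration in the second variable gives
\[ -\int_0^T u(s)\cdot \phi'(s)\dd s \;=\; -\int_t^T b\cdot \phi'(s)\dd s \;=\; b\cdot \phi(t) \;=\; \langle b\delta_t, \phi\rangle, \]
where the middle equality uses the fact that $\phi$ vanishes at the endpoint $T$. Hence $L([b\1_{(t,T)}]) = b\delta_t$, which combined with the display above yields precisely the claimed identity for $\Ext(\mathcal{B}_K)$.

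I do not anticipate any genuine obstacle, since the bulk of the argument has been absorbed into Lemma \ref{lem:charvecmeas}; the only point deserving a remark is that the value of $L$ on $[u]$ is independent of the representative chosen within the equivalence class $\R^n\1_\Omega + u$, as constant functions have vanishing distributional derivative, so the identification $L^{-1}(b\delta_t) = [b\1_{(t,T)}]$ is unambiguous at the level of the quotient space.
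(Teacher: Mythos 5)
Your proposal is correct and follows essentially the same route the paper intends: the theorem is presented as a direct consequence of the bijectivity of $L$, the transfer of extremality via \cite[Lemma~3.2]{BreCar20}, and Lemma~\ref{lem:charvecmeas}, with the only remaining step being to identify $L^{-1}(b\delta_t)=\bigl[b\1_{(t,T)}\bigr]$, which you verify correctly. The remark about well-definedness on the quotient is a nice touch but is already covered by the earlier lemma establishing injectivity of $L$ on equivalence classes.
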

\begin{rem}
It is worth pointing out that Lemma~\ref{lem:charvecmeas} also holds in higher-dimensional domains~$\Omega$. However, for~$d \geq 1$,~$L$ is not bijective and thus a direct transfer of the proposed strategy to domains in multiple dimensions is not possible.  
\end{rem}

\subsection{The scalar-valued case} \label{subsec:scalarcase}
Next, we consider the case of scalar-valued functions of bounded variation on domains in multiple dimensions, that is $n=1$ and $d>1$. For this purpose, we first recall the isotropic case, i.e.~$K$ is given by the Euclidean unit ball in~$\R^d$. In this context, it is well known, see e.g.~\cite{BreCar20}, that the set of extremal points is constituted by the equivalence classes of (scaled) characteristic functions of simple sets. The proof of this characterization inherently relies on the equivalence of isotropic total variation of an indicator function of a set and the perimeter of the latter. Surprisingly, the following results shows that the structure of~$\Ext(\mathcal{B}_K)$ is, mutatis mutandis, retained in the anisotropic case of~\eqref{eq:tvdef}, i.e.,
    \begin{align*}
        \Ext(\mathcal{B}_K)= \left\{\, \left\lbrack\frac{\1_E}{\TV_K(\1_E)}\right\rbrack\;\middle\vert\;E~\text{simple}, \ |E| \in \big(0,|\Omega|\big)\right\}.
    \end{align*}    
    In its proof, we crucially rely on representing the anisotropic total variation functional~$\TV_K$ in a ``primal'' form
\begin{equation}\label{eq:TVKfromnormals}\TV_K(u) = \begin{cases}\displaystyle{\int_\Omega \left|\frac{Du}{|Du|}(x)\right|_K \dd|Du|(x)} &\text{ if } u \in \BV(\Omega)\\ +\infty &\text{ otherwise},\end{cases}\end{equation}
rather than by its original definition by duality in~\eqref{eq:tvdef}. Here, $Du/|Du|$ is the polar with respect to the usual (Euclidean) definition of $|Du|$. In the scalar case and allowing for space dependence of $K$ which we do not take into account here, this equivalence is proved in \cite[Thm.~5.1]{AmaBel94} (see also Lemma \ref{lem:TVKprimal} below). Moreover, $\TV_K$ also satisfies a coarea formula, namely
\begin{equation}\label{eq:TVKcoarea}\int_\Omega \left|\frac{Du}{|Du|}(x)\right|_K \dd|Du|(x) = \int_{-\infty}^{+\infty} \int_\Omega \left|\frac{D\1_{\{u > t\}}}{|D\1_{\{u > t\}}|}(x)\right|_K \dd|D\1_{\{u > t\}}|(x) \dd t,\end{equation}
as noticed in \cite[Rem.~4.4]{AmaBel94} and proved in \cite[Prop.~2.3.7]{Jal12}. Furthermore, note that for the integrand in~\eqref{eq:TVKfromnormals} we have the identity
\begin{align*}
    |Du|_K(O)= \int_O \left|\frac{Du}{|Du|}(x)\right|_K \dd|Du|(x) \quad \text{for all} \quad O \in \mathfrak{B}(\Omega).
\end{align*}
According to \cite[Prop.~1.2]{DemTem84} and \cite[Sec.~II.5.1]{Tem83}, the mapping $u \mapsto |Du|_K(O)$ is convex for every~$O \in \mathfrak{B}(\Omega)$ since it can be expressed as a Fenchel conjugate. Finally, we point out that decomposability of a set~$E$ can be directly formulated in terms of operations on the normal $\nu_E = D\1_E/|D\1_E|$ on $\pa E$, see e.g. \cite[Thm.~16.3]{Mag12}.

\begin{thm} \label{thm:extremalsscalar}
Let~$n=1$. Then there holds
    \begin{align*}
        \Ext(\mathcal{B}_K)= \left\{\, \left\lbrack \frac{\1_E}{\TV_K(\1_E)} \right \rbrack\;\middle\vert\; E\subset \Omega \  \text{simple}, \ |E| \in \big(0,|\Omega|\big)\right\}.
    \end{align*}
\end{thm}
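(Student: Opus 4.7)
The plan is to prove the two inclusions separately. For $\Ext(\mathcal{B}_K) \subseteq \{[\1_E/\TV_K(\1_E)] \st E \text{ simple}, |E| \in (0,|\Omega|)\}$, I would start from $[u] \in \Ext(\mathcal{B}_K)$ with $\TV_K(u) = 1$ and use, for each $s \in \R$, the splitting $u = (u \wedge s) + (u-s)_+$. The coarea formula \eqref{eq:TVKcoarea} yields the additivity $\TV_K(u) = \TV_K(u \wedge s) + \TV_K((u-s)_+)$, so after normalizing
\[[u] = \TV_K(u\wedge s)\left[\frac{u\wedge s}{\TV_K(u\wedge s)}\right] + \TV_K((u-s)_+)\left[\frac{(u-s)_+}{\TV_K((u-s)_+)}\right]\]
is a nontrivial convex combination in $\mathcal{B}_K$ whenever $s$ lies strictly between the essential infimum and essential supremum of $u$. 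Extremality forces each normalized summand to equal $[u]$ in the quotient; given the $s$-capped form of $u \wedge s$, this pins $u$ down to take at most two essential values, so after a shift and rescaling $u = \1_E/\TV_K(\1_E)$ for some $E \subset \Omega$ with $|E| \in (0,|\Omega|)$. If $E$ were decomposable, $E = E_1 \cup E_2$ with $\Per(E,\Omega) = \Per(E_1,\Omega) + \Per(E_2,\Omega)$, then $\pa E = \pa E_1 \cup \pa E_2$ up to $\Ha^{d-1}$-null with shared measure-theoretic inner normal on each piece, and the primal formula \eqref{eq:TVKfromnormals} would transfer the additivity to $\TV_K(\1_E) = \TV_K(\1_{E_1}) + \TV_K(\1_{E_2})$; normalizing would produce a nontrivial convex combination of two distinct elements of $\mathcal{B}_K$ equal to $[u]$, contradicting extremality. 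The case of $\Omega \setminus E$ decomposable is handled analogously, using $[\1_E] = -[\1_{\Omega\setminus E}]$ in the quotient together with $\TV_K(\1_F) = \TV_K(\1_{\Omega\setminus F})$ to bring the pieces back into $\mathcal{B}_K$.

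For the reverse inclusion, I would assume $E$ simple with $|E| \in (0,|\Omega|)$ and $[\1_E/\TV_K(\1_E)] = (1-\lambda)[u_1] + \lambda[u_2]$ for $[u_i] \in \mathcal{B}_K$, $\lambda \in (0,1)$. Picking representatives so that $\1_E/\TV_K(\1_E) = (1-\lambda)u_1 + \lambda u_2$, the Borel-set convexity of $u \mapsto |Du|_K(O)$ recalled before the theorem yields
\[|Du|_K(O) \leq (1-\lambda)|Du_1|_K(O) + \lambda|Du_2|_K(O) \quad \text{for every } O \in \mathfrak{B}(\Omega),\]
which is an equality at $O = \Omega$; additivity of finite measures then promotes it to an equality for every Borel set. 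Hence $|Du_i|_K$ is absolutely continuous with respect to $|D\1_E|$, and by equivalence of norms on $\R^d$ so is $|Du_i|$; in particular $|Du_i|(E^{(1)}) = |Du_i|(E^{(0)}) = 0$. Combined with indecomposability this should force each $u_i$ to be a.e.\ constant on $E^{(1)}$ and on $E^{(0)}$, so $[u_i] = c_i[\1_E]$ in the quotient. Pairing $\TV_K(u_i) = |c_i|\TV_K(\1_E) \leq 1$ with the identity $(1-\lambda)c_1 + \lambda c_2 = 1/\TV_K(\1_E)$ then forces $c_1 = c_2 = 1/\TV_K(\1_E)$, so $[u_1] = [u_2] = [u]$.

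The main obstacle I anticipate is this last step: turning the concentration of $|Du_i|$ on $\pa E$ into honest constancy on the measure-theoretic interiors. I plan to handle it via coarea applied to $u_i$: with $F_t := \{u_i > t\}$, the vanishing of $|Du_i|$ on $E^{(1)}$ yields $\Ha^{d-1}(\pa F_t \cap E^{(1)}) = 0$ for a.e.\ $t$, and the standard splitting $\pa(F_t \cap E) = (\pa F_t \cap E^{(1)}) \cup (\pa E \cap F_t^{(1)})$ up to $\Ha^{d-1}$-null then produces
\[\Per(E,\Omega) = \Per(F_t \cap E,\Omega) + \Per(E \setminus F_t,\Omega) \quad\text{for a.e.\ } t \in \R.\]
Indecomposability of $E$ forces $|F_t \cap E| \in \{0, |E|\}$ for a.e.\ $t$, so $u_i$ is essentially constant on $E$, and the symmetric argument on the indecomposable set $\Omega \setminus E$ concludes. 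This is the $\BV$ analogue of ``connected plus vanishing gradient implies constant'', alternatively available from the structure results of \cite[Thm.~16.3]{Mag12}.
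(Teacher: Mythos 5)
Your proof is correct and follows essentially the same route as the paper's: the anisotropic coarea formula \eqref{eq:TVKcoarea} to show that extremals are two-valued, the indecomposability argument via the primal formula \eqref{eq:TVKfromnormals} to show $E$ is simple, and the Borel-set convexity of $u \mapsto |Du|_K(\cdot)$ together with a constancy-on-indecomposable-components step in the other direction. The only cosmetic deviations are that the paper balances the cut level at $\bar{s}$ with $G(\bar{s})=1/2$ whereas you allow an arbitrary $s$ between the essential extrema, and that the paper invokes \cite[Lemma~4.6]{BreCar20} for the constancy step you rederive directly via coarea.
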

\begin{proof}
The proof is similar to the isotropic case in~\cite[Theorem~4.7]{BreCar20} but makes crucial use of the representation in~\eqref{eq:TVKfromnormals} as well as the associated coarea formula.
    We first show that
    \begin{align*}
     \left\{\,\left\lbrack\frac{\1_E}{\TV_K(\1_E)}\right \rbrack\;\middle\vert\; E\subset \Omega \  \text{simple}, \ |E| \in \big(0,|\Omega|\big)\right\} \subset   \Ext(\mathcal{B}_K).
    \end{align*}
    For this purpose, let~$E \subset \Omega$ be simple and denote by~$E^\du$ and~$E^\dz$ its measure theoretic interior and exterior, respectively. Now, let~$\lambda \in (0,1)$ as well as~$u_1, u_2 \in \BV(\Omega)$ with~$\TV_K(u_1)\leq 1, \TV_K(u_2)\leq 1$ be such that there is~$c \in \R$ with
    \begin{align} \label{eq:convcomb}
        \frac{\1_E}{\TV_K(\1_E)}+c=\lambda u_1+ (1-\lambda) u_2 \quad \text{and thus} \quad \frac{D\1_E}{\TV_K(\1_E)}=\lambda Du_1+ (1-\lambda) Du_2. 
    \end{align}
    Recall that the mapping~$u \mapsto |Du|_K (O) $ is convex, i.e., we have 
    \begin{align} \label{eq:convaux}
                \frac{|D\1_E|_K(O)}{\TV_K(\1_E)} \leq\lambda |Du_1|_K(O)+ (1-\lambda) |Du_2|_K(O) \quad \text{for all} \quad O \in \mathfrak{B}(\Omega).
    \end{align}
    Now, we want to conclude that in fact there holds
    \begin{align*}
         \frac{|D\1_E|_K(O)}{\TV_K(\1_E)} =\lambda |Du_1|_K(O)+ (1-\lambda) |Du_2|_K(O) \quad \text{for all} \quad O \in \mathfrak{B}(\Omega).
    \end{align*}
Indeed, assume that there is~$O \in \mathfrak{B}(\Omega) $ such that the inequality in~\eqref{eq:convaux} is strict. We estimate
\begin{align*}
    1=  \frac{|D\1_E|_K(\Omega)}{\TV_K(\1_E)}&=\frac{|D\1_E|_K(O)}{\TV_K(\1_E)}+\frac{|D\1_E|_K(\Omega \setminus O)}{\TV_K(\1_E)} \\&\leq \frac{|D\1_E|_K(O)}{\TV_K(\1_E)}+ \lambda |Du_1|_K(\Omega \setminus O)
    +(1- \lambda) |Du_2|_K(\Omega \setminus O) \\
    &< \lambda |Du_1|_K(\Omega)
    +(1- \lambda) |Du_2|_K(\Omega)\leq 1,
\end{align*}
yielding a contradiction. In particular, we have~$\TV_K(u_i)=|Du_i|_K(\Omega)=1, i=1,2$. Moreover, using \eqref{eq:D1E-Ha} we conclude
\begin{align*}
0&=|D\1_E|_K(E^\du)=|D\1_E|_K(E^\dz)\\&=|Du_1|_K(E^\du)=|Du_2|_K(E^\dz)=|Du_1|_K(E^\du)=|Du_2|_K(E^\dz).
\end{align*}
Note that there is~$c_K>0$ with
\begin{align} \label{eq:equivnorms}
    c_K |Du_i|(O) \leq |Du_i|_K (O) \quad \text{for all} \quad O \in \mathfrak{B}(\Omega) \quad \text{and} \quad   i=1,2.
\end{align}
Hence, invoking~\cite[Lemma 4.6]{BreCar20}, we conclude
\begin{align*}
    \lbrack u_1 \rbrack = \lbrack d_1 \1_E \rbrack, \quad  u_2= \lbrack d_2 \1_E \rbrack \quad \text{where} \quad |d_1|=|d_2|=\TV_K(\1_E)^{-1}.
\end{align*}
Note that we necessarily have~$d_1=d_2=\TV_K(\1_E)^{-1}$. Indeed,~$d_1$ and~$d_2$ cannot be both negative, due to~\eqref{eq:convcomb}, and mixed signs are not possible due to~$|2\lambda-1|<1$. As a consequence, we conclude
\begin{align*}
    \lbrack u_1 \rbrack= \lbrack u_2 \rbrack= \left\lbrack \frac{\1_E}{\TV_K(\1_E)}\right \rbrack,
\end{align*}
which yields the extremality of~$\lbrack \1_E/\TV_K(\1_E) \rbrack $.
For the converse inclusion, let~$\lbrack u \rbrack$ be extremal and define the function
\begin{align*}
    G(s)= \int_{-\infty}^{s} \int_\Omega \left|\frac{D\1_{\{u > t\}}}{|D\1_{\{u > t\}}|}(x)\right|_K \mathrm{d}|D\1_{\{u > t\}}|(x) \mathrm{d} t \quad \text{with} \quad \lim_{s \rightarrow -\infty} G(s)=0, \quad \lim_{s \rightarrow +\infty} G(s)=1.
\end{align*}
Since~$G$ is continuous, there is~$\Bar{s} \in \R$ with~$G(\Bar{s})=1/2$. Consequently, we have
\begin{align*}
    \lbrack u \rbrack =\frac{1}{2} \lbrack u_1 \rbrack+ \frac{1}{2} \lbrack u_2 \rbrack \quad \text{where} \quad u_1= 2 \min\{u, \bar s\}, \quad u_2= 2 \max\{u-\bar s,0\}.
\end{align*}
Since~$\lbrack u \rbrack$ is extremal, this implies that~$u=u_1+c_1=u_2+c_2$ for some~$c_1,c_2 \in \R$. Thus, for a.e.~$x \in \Omega$, we conclude~$u(x)=2\bar s+c_1$ if~$u(x) \geq \bar s$ and~$u(x)=c_2$ else. Since~$\TV_K(u)=1$, i.e.~$Du \neq 0$, this implies that $u$ achieves exactly two values almost everywhere on~$\Omega$. W.l.o.g, possibly by a change of representative, we can assume~$u(x)\in \{0,a\}$ for a.e.~$x\in\Omega$ and some~$a>0$. Again noting that~$\TV_K(u)=1$, we arrive at
\begin{align*}
    \lbrack u \rbrack = \left\lbrack \frac{\1_E}{\TV_K(\1_E)} \right \rbrack  \quad \text{where} \quad E \coloneqq \left\{\,x \in \Omega\;\middle\vert\;u(x)=a\right\}.
\end{align*}
Note that the latter is uniquely defined up to sets of Lebesgue-measure zero.
It remains to show that~$E$ is simple, i.e. both $E$ and~$\Omega \setminus E$ are indecomposable. For this purpose, first assume that~$E$ is decomposable, i.e., there are $A, B \subset \Omega$ with~$E=A \cup B$,~$|A|,|B|>0$, $|A \cap B|=0$ and~$\Per(E,\Omega)=\Per(A,\Omega)+\Per(B,\Omega)$. We note that this implies~$\Ha^{d-1}(\partial^* A \cap \partial^* B )=0$ as well as~$D\1_E=D \1_A+D \1_B$. This can be derived in the following way:
Using Federer's theorem \cite[Thm.~16.2]{Mag12}, De Giorgi's structure theorem \cite[Thm.~15.9]{Mag12} and~\cite[Theorem 16.3, (16.12)]{Mag12}, we get
\begin{align*}
\Per(A, B^\du)+\Per(A, B^\dz)&+ \Per(B, A^\du)+\Per(B, A^\dz)+ 2 \Ha^{d-1}(\partial^* A \cap \partial^* B )\\&=\Per(E,\Omega)=\Per(A, \Omega)+\Per(B, \Omega),
\end{align*}
as well as
\begin{align*}
    \Per(E,\Omega) &= \Per(A,B^\dz)+\Per(B,A^\dz)+ \Ha^{d-1} \left( \left\{x \in \partial^* A \cap \partial^* B \;\middle\vert\; \nu_A(x)=\nu_B(x)\,\right\} \right)
    \\ & \leq
    \Per(A,B^\dz)+\Per(B,A^\dz)+ \Ha^{d-1} \left(  \partial^* A \cap \partial^* B \right).
\end{align*}
due to~$E=A \cup B$. Plugging the second estimate into the first one and rearranging yields
\begin{align*}
    0 \leq \Per(A, B^\du)+ \Per(B, A^\du)+  \Ha^{d-1}(\partial^* A \cap \partial^* B ) \leq 0
\end{align*}
and, consequently,
\begin{align*}
 \Ha^{d-1}(\partial^* A \cap \partial^* B )=\Per(A,B^\du)=\Per(B,A^\du)=0.
\end{align*}
Finally,~\cite[Thm. 16.3, (16.6)]{Mag12} yields~$Du=D \1_A \mres B^\dz+D \1_B \mres A^\dz=D \1_A+D \1_B$. In summary, we thus have
\begin{align*}
    \TV_K(\1_E) &= \int_{\partial^* E} \left| \nu_E (x)\right|_K~\mathrm{d} \Ha^{d-1}(x)=\int_{\partial^* A} \left| \nu_A (x)\right|_K~\mathrm{d} \Ha^{d-1}(x)+\int_{\partial^* B} \left| \nu_B (x)\right|_K~\mathrm{d} \Ha^{d-1}(x) \\
    &= \TV_K(\1_A)+\TV_K(\1_B),
\end{align*}
where the second equality makes use of~$\Ha^{d-1}(\partial^* A \cap \partial^* B )=0$. Since~$|A|, |B|>0$ but~$|A \cap B|=0$, we have that~$\1_A$ and~$\1_B$ are not constant. This implies $\TV_K(\1_A),\TV_K(\1_B) >0$, so we can write
\begin{align*}
    u_1= \frac{\1_A}{\TV_K(\1_A)}, \quad u_2= \frac{\1_B}{\TV_K(\1_B)} \quad \text{noting that} \quad u= \frac{\TV_K(\1_A)}{\TV_K(\1_E)} u_1+ \frac{\TV_K(\1_B)}{\TV_K(\1_E)} u_2.
\end{align*}
Hence, this yields a nontrivial convex combination of~$[u]$ contradicting its extremality. As a consequence,~$E$ needs to be indecomposable. Similar arguments as well as noting that~$\TV_K(\1_E)=\TV_K(\1_{\Omega \setminus E})$ yield the indecomposability of~$\Omega \setminus E$ and, finally, that~$E$ is simple.
\end{proof}

\section{Additive matrix norms} \label{sec:additive}
We now turn to the general case of vector-valued functions of total variation on multidimensional domains (i.e.~with $d \geq 2$) together with specific choices of the matrix norm. As a first example, we consider a norm of the form \eqref{eq:KsKvNorm} where $|\cdot|_{K_v}=|\cdot|_{1}$ and $K_s$ is arbitrary. In this case, we observe that
\begin{align} \label{eq:groupnorm}
    \TV_K(u)=\sum^n_{j=1}\TV_{K_s}(u_j) \quad \text{for all} \quad u=(u_1,\dots, u_n) \in \BV(\Omega; \R^n). 
\end{align}
Hence, due to the following abstract lemma, the characterization of~$\Ext(\mathcal{B}_K)$ can be traced back to the scalar case. 
\begin{lemma}\label{lem:additivenorms}
Let $V$ be a vector space that decomposes in a direct sum as 
\[V = V_1 \oplus \ldots \oplus V_n,\]
and $f: V \to \R \cup \{+\infty\}$ a positively one-homogeneous convex functional such that $f(v)=0$ only if $v=0$, and
\begin{equation}\label{eq:additivity}f(v)=\sum_{i=1}^n f(v_i) \ \text{ whenever }\ v=\sum_{i=1}^n v_i \ \text{ with }\ v_i \in V_i.\end{equation}
Then there holds
\begin{align} \label{eq:extofsum}
v=\sum_{i=1}^n v_i \in \Ext\big(\{ w \in V \,|\, f(w) \leq 1\}\big)
\end{align}
if and only if there is an index $i_0 \in \{1,\ldots,n\}$ as well as~$v_{i_0}$ with
\begin{align} \label{eq:extofpart}
v=v_{i_0} \quad \text{as well as} \quad v_{i_0} \in \Ext\big(\{ w \in V_{i_0} \,|\, f(w) \leq 1\}\big).
\end{align}
\end{lemma}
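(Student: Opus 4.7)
The plan is to prove both implications directly, making essential use of the uniqueness of the direct-sum decomposition together with the additivity \eqref{eq:additivity}. I set $B := \{w \in V \mid f(w) \leq 1\}$. As preliminaries I use that $f \geq 0$ (automatic in the paper's applications, where $f = \TV_K$ is a seminorm) and that every extremal point of $B$ or of $B \cap V_i$ satisfies $f(\cdot) = 1$, which is the standard consequence of $f$ being the gauge of a convex set with nontrivial interior.

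For the implication \eqref{eq:extofpart}$\Rightarrow$\eqref{eq:extofsum}, I take $v = v_{i_0} \in \Ext(B \cap V_{i_0})$ and suppose $v = \lambda a + (1-\lambda) b$ with $a, b \in B$ and $\lambda \in (0,1)$. Decomposing $a = \sum_i a_i$ and $b = \sum_i b_i$ along the direct sum, uniqueness yields $\lambda a_i + (1-\lambda) b_i = 0$ for $i \neq i_0$ and $\lambda a_{i_0} + (1-\lambda) b_{i_0} = v_{i_0}$. Nonnegativity and additivity of $f$ give $f(a_{i_0}), f(b_{i_0}) \leq 1$, so extremality of $v_{i_0}$ in $V_{i_0}$ forces $a_{i_0} = b_{i_0} = v_{i_0}$. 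The additive identity $1 = \sum_i f(a_i)$ with $f(a_{i_0}) = 1$ then forces $f(a_i) = 0$, hence $a_i = 0$, for all $i \neq i_0$ by definiteness, and likewise for $b$; thus $a = b = v$.

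For the converse, I start from $v = \sum_i v_i \in \Ext(B)$ and claim that exactly one component can be nonzero. Suppose for contradiction that $f(v_{i_0}), f(v_{i_1}) > 0$ for distinct $i_0 \neq i_1$; setting $\alpha := f(v_{i_0}) \in (0,1)$ and using one-homogeneity and additivity, the vectors $a := v_{i_0}/\alpha \in V_{i_0}$ and $b := (v - v_{i_0})/(1-\alpha) \in \bigoplus_{i \neq i_0} V_i$ both have $f$-value $1$ and satisfy $v = \alpha a + (1-\alpha) b$. Since $a$ and $b$ lie in summands meeting only at $0$ and are both nonzero, $a \neq b$, contradicting extremality. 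Hence $v = v_{i_0}$ for a unique index, and extremality of $v_{i_0}$ in $B \cap V_{i_0}$ then follows immediately since any decomposition there is a fortiori one in $B$. I expect no substantial obstacle: the argument is essentially bookkeeping for the direct sum against \eqref{eq:additivity}, with the one delicate point being the verification that the convex decomposition produced in the converse is genuinely nontrivial, which is secured by the disjointness of the summands combined with $f(a) = f(b) = 1$.
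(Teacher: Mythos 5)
Your proposal is correct and follows essentially the same route as the paper: the necessity direction peels off one component and rescales the two pieces into a nontrivial convex combination of elements with $f$-value $1$ (the paper's two-group partition, specialized), and the sufficiency direction decomposes the two competitors along the direct sum, kills the off-$i_0$ components via additivity and definiteness, and invokes extremality in $B \cap V_{i_0}$. The only differences are cosmetic: you assume $f \geq 0$ outright (the paper instead uses $0 = f(0) \leq \lambda f(u_i) + (1-\lambda) f(w_i)$ from convexity, though nonnegativity is in fact forced by the hypotheses outside degenerate cases), and your ``$1 = \sum_i f(a_i)$'' should be read as $\sum_i f(a_i) \leq 1$ with $f(a_{i_0}) = 1$, which is what your argument actually uses.
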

\begin{proof}
We start by proving~$\eqref{eq:extofsum} \Rightarrow \eqref{eq:extofpart}$. For this purpose, we consider the two cases~$v=0$ and~$v \neq 0$ separately. Assume that~$v=0$ is an extremal point of $\{ w \in V \,|\, f(w) \leq 1\}$ but~\eqref{eq:extofpart} does not hold. Since~$V$ is a direct sum, this also implies~$v_i=0 \in V_i$ for all $i=1,\dots, n$. Select an arbitrary index~$\bar{\imath}$. Since  \eqref{eq:extofpart} does not hold and~$v=v_{\bar{\imath}}=0$, we conclude that~$0 \in V_{\bar{\imath}}$ is not an extremal point of~$\{ w \in V_{\bar{\imath}} \,|\, f(w) \leq 1\}$. Consequently, there are $u,w \in V_{\bar{\imath}},~u \neq w$ and~$\lambda \in (0,1)$ with 
\begin{align*}
  f(u), f(w) \in (0,1] \quad \text{as well as} \quad   0=(1-\lambda)u+ \lambda w \in V_{\bar{\imath}}.
\end{align*}
Since $V_{\bar{\imath}} \subset V$, this yields a contradiction to the extremality of~$0$ in $\{ w \in V \,|\, f(w) \leq 1\}$. We assume now that $v \neq 0$ is extremal in $\{ w \in V \,|\, f(w) \leq 1\}$, but there is more than one index $i$ with $f(v_i) \neq 0$. Then, by partitioning $\{1,\ldots,n\}$ into two sets so that each contains one such index, and grouping the $v_i$ accordingly, we can write 
\begin{equation}\begin{aligned}v&=u+w \quad\text{with }u \text{ and }w\text{ linearly independent,}\\f(v)&=f(u)+f(w)=1 \text{ and }f(u),f(w) \in (0,1).\end{aligned}\end{equation}
But then,
\[v = f(u) \frac{u}{f(u)} + f(w) \frac{w}{f(w)}\]
is a nontrivial convex combination for $v$ with elements of $\{ z \in V \,|\, f(z) = 1\}$, contradicting the assumed extremality of $v$. Since
\begin{align*}
    f(v) = \sum^n_{i=1} f(v_i)
\end{align*}
and~$f$ only vanishes at zero, this contradiction and the assumption $v \neq 0$ shows that there is precisely one index $i_0$ for which $f(v_{i_0}) \neq 0$ and we have~$v=v_{i_0}$. Moreover, if we had $v=v_{i_0} = \lambda u_{i_0} + (1-\lambda) w_{i_0}$ with $u_{i_0}, w_{i_0} \in \{v \in V_{i_0} \,\vert\, f(v) \leq 1\}$, we can use the extremality of $v$ in $\{ w \in V \,|\, f(w) \leq 1\}$ to infer that $u_{i_0}=w_{i_0}=v_{i_0}$, concluding that $v_{i_0}$ must be extremal in $\{v \in V_{i_0} \,\vert\, f(v) \leq 1\}$.

For the converse, if $v=v_{i_0} \in \Ext(\{ z \in V_{i_0} \,|\, f(z) \leq 1\})$, we first note that~$f(v)=f(v_{i_0})=1$ since~$v_{i_0} \neq 0$ is an extremal point and~$f$ only vanishes at zero. Now assume that we can write 
\begin{gather*}v_{i_0} = \lambda u + (1-\lambda) w \quad \text{with } u,w \in \{ z \in V \,|\, f(z) \leq 1\}, \text{ }\lambda \in (0,1), \text{ and}\\u=\sum_{i=1}^n u_i,\quad w=\sum_{i=1}^n w_i \quad\text{with }u_i, w_i \in V_i.\end{gather*}
Since~$V$ decomposes into the direct sum of the~$V_i$ this implies
\begin{align*}
    v_{i_0}=\lambda u_{i_0} + (1-\lambda) w_{i_0} \quad \text{as well as } \ 0= \lambda u_{i} + (1-\lambda) w_{i} \ \text{ for all }i \neq i_0.
\end{align*}
We estimate
\begin{align*}
    1=f(v_{i_0}) \leq \lambda f(u_{i_0}) + (1-\lambda) f(w_{i_0}) \leq  \sum^n_{i=1} \left\lbrack \lambda f(u_{i}) + (1-\lambda) f(w_{i}) \right \rbrack= \lambda f(u)+ (1-\lambda) f(w) \leq 1. 
\end{align*}
Here, the second inequality follows from
\begin{align*}
    0=f(0)=f(\lambda u_{i} + (1-\lambda) w_{i}) \leq \lambda f( u_{i}) + (1-\lambda) f( w_{i})  \quad \text{for all} \quad i \neq i_0
\end{align*}
while the final equality follows from the linearity of~$f$ w.r.t the direct sum decomposition. As a consequence, we conclude~$f(u_i)=f(w_i)=0$ and, since~$f$ only vanishes at zero,~$u_i=w_i=0$ for all~$i \neq i_0$. Consequently, we have~$u=u_{i_0} \in V_{i_0}$,~$w=w_{i_0} \in V_{i_0}$ as well as~$f(u_{i_0}) \leq 1,~f(w_{i_0}) \leq 1$. Extremality of~$v_{i_0}$ in~$\Ext(\{ z \in V_{i_0} \,|\, f(z) \leq 1\})$ finally yields~$u=w=v_{i_0}$, i.e.~$v=v_{i_0}$ is an extremal point of~$\Ext(\{ z \in V \,|\, f(z) \leq 1\})$.
\end{proof}

\begin{cor}\label{cor:TV12ext}
Let~$|\cdot|_K$ be given by~\eqref{eq:groupnorm}. Then there holds
\begin{align*}
    \Ext(\mathcal{B}_K)= \left\{\,\left\lbrack e_j\frac{\1_E}{\TV_{K_{s}}(\1_E)} \right\rbrack\;\middle\vert\; E \subset \Omega~\text{simple}, \ |E| \in \big(0,|\Omega|\big),~j=1,\dots,n \,\right\},
\end{align*}
where~$\{e_i\}^n_{i=1}$ denotes the canonical basis of~$\R^n$. 
\end{cor}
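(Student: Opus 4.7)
The plan is to realize this corollary as a direct application of Lemma~\ref{lem:additivenorms} combined with the scalar-valued characterization of Theorem~\ref{thm:extremalsscalar}. The ambient space is $V = \BV(\Omega; \R^n)/(\R^n \1_\Omega)$, and I would propose the decomposition $V = V_1 \oplus \dots \oplus V_n$, where
\[V_j := \left\{\,[e_j u]\;\middle\vert\; u \in \BV(\Omega)\,\right\}.\]
The fact that this is a direct sum decomposition comes from the linear independence of $\{e_j\}_{j=1}^n$: if $\sum_j e_j u_j \equiv c\1_\Omega$ for some $c \in \R^n$, then component-by-component $u_j \equiv c_j \1_\Omega$, so each $[e_j u_j]$ is zero in $V_j$. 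Every $[u] \in V$ decomposes as $\sum_j [e_j u_j]$ using the components of $u$.

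Next, I would verify the hypotheses of Lemma~\ref{lem:additivenorms} with $f = \TV_K$. Positive one-homogeneity and convexity are immediate from the definition \eqref{eq:tvdef}, and on the quotient space $\TV_K$ vanishes only at $[0]$ since $\TV_K(u) \geq c_K \TV(u)$ for some $c_K > 0$, so $\TV_K(u) = 0$ forces $u$ to be constant. The crucial additivity property \eqref{eq:additivity} is exactly \eqref{eq:groupnorm}: for $v = \sum_j [e_j u_j]$ with $[e_j u_j] \in V_j$, the matrix $Du$ has only the $j$-th row nonzero in each summand, so
\[\TV_K(v) = \sum_{j=1}^n \TV_{K_s}(u_j) = \sum_{j=1}^n \TV_K([e_j u_j]).\]

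With these ingredients, Lemma~\ref{lem:additivenorms} reduces the characterization of $\Ext(\mathcal{B}_K)$ to the determination, for each $j_0 \in \{1,\dots,n\}$, of the extremal points of the unit ball inside $V_{j_0}$, i.e., of $\{[e_{j_0} u] \in V_{j_0} \st \TV_{K_s}(u) \leq 1\}$. The obvious identification $[u] \in \BV(\Omega)/(\R\1_\Omega) \mapsto [e_{j_0} u] \in V_{j_0}$ is a linear isomorphism that sends $\TV_{K_s}$ to the restriction of $\TV_K$, hence it sends extremals to extremals. Invoking Theorem~\ref{thm:extremalsscalar} applied to the scalar norm $|\cdot|_{K_s}$ on $\R^d$, the extremals of $\{[u] \st \TV_{K_s}(u) \leq 1\}$ are exactly the classes $[\1_E/\TV_{K_s}(\1_E)]$ for simple sets $E \subset \Omega$ with $|E| \in (0, |\Omega|)$.

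Pulling this back through the isomorphism and noting that $\TV_K(e_{j_0} \1_E) = \TV_{K_s}(\1_E)$ (which is what the corollary's notation $\TV_K(\1_E)$ refers to) yields the desired characterization. There is no essential obstacle here since all the work is already packaged in Lemma~\ref{lem:additivenorms} and Theorem~\ref{thm:extremalsscalar}; the only point deserving care is the verification that $\TV_K$ vanishes only at the zero class of $V$, which is needed as a hypothesis of the abstract lemma and which I would justify via the norm equivalence \eqref{eq:equivnorms} together with connectedness of $\Omega$.
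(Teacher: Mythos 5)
Your proposal is correct and follows essentially the same route as the paper: apply Lemma~\ref{lem:additivenorms} with the componentwise decomposition $V_j=\{[e_j u]\,|\,u\in\BV(\Omega)\}$, the additivity \eqref{eq:groupnorm}, and then Theorem~\ref{thm:extremalsscalar} for the scalar norm $|\cdot|_{K_s}$. The extra verifications you supply (direct sum structure, $\TV_K$ vanishing only at the zero class via \eqref{eq:equivnorms}, and the isomorphism $[u]\mapsto[e_{j_0}u]$ preserving extremality) are exactly the details the paper leaves implicit, so there is no gap.
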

\begin{proof}
   The claimed statement follows from Lemma~\ref{lem:additivenorms} together with Theorem~\ref{thm:extremalsscalar}, setting
   \begin{align*}
       V= \BV(\Omega;\R^n)/(\R^n \1_\Omega), \quad V_i= \big\{ \, [(u \cdot e_j)e_j]\;|\; u \in \BV(\Omega;\R^n) \big\}
   \end{align*}
   as well as~$f([u])= \TV_K(u)$ which satisfies
   \[f([u])=\sum^n_{j=1} \TV_{K_s}(u \cdot e_j)=\sum^n_{j=1} \TV_{K}\big((u \cdot e_j)e_j\big)= \sum^n_{j=1} f\big(\big[(u \cdot e_j)e_j\big]\big).\qedhere\]
\end{proof}
\begin{rem}
The work \cite{ParNow22} explores some extensions to deep networks of the Radon transform based approach to shallow ReLU networks developed in \cite{OngWilSouSre20, ParNow21}. The latter is centered on the seminorm
\[|f|_{\mathscr{R} \BV^2(\R^d)} = \mathscr{R} \TV^2 (f) = c_d \|\partial_t^2 \Lambda^{d-1} \mathscr{R} f\|_{\M(\S^{d-1} \times \R)},\]
where $c_d=2(2\pi)^{d-1}$, $\mathscr{R}:\R^d \to \S^{d-1} \times \R$ is the Radon transform, and $\Lambda^{d-1}$ is the ramp filter occurring in its inversion formula by backprojection, given by
\[\Lambda^{d-1} g(\theta, t) = \begin{cases}\partial_t^{d-1} g(\theta, t) &\text{ if } d-1 \text{ even,} \\ \mathscr{H}_t \partial_t^{d-1} g(\theta, t) &\text{ if } d-1 \text{ odd,}\end{cases}\] 
with $\mathscr{H}_t$ the Hilbert transform in the shift variable $t$.

The natural extension to deep networks considered in \cite{ParNow22} consists in the learning problem
\begin{equation}\label{eq:deeplearning}\min_{\substack{  f^{(1)},\ldots,f^{(L)}\\ f^{(\ell)} \in \mathscr{R} \BV^2(\R^{d_{\ell - 1}}; \R^{d_\ell}) \\ f = f^{(L)} \circ \ldots \circ f^{(1)}}} \sum_{n=1}^N \mathscr{L}(y_n, f(x_n)) + \alpha \sum_{\ell = 1}^L \left| f^{(\ell)} \right|_{\mathscr{R} \BV^2(\R^{d_{\ell - 1}}; \R^{d_\ell})}\end{equation}
where $\mathscr{R} \BV^2(\R^{d_{\ell - 1}}; \R^{d_\ell})$ is a Cartesian product of $d_\ell$ copies of $\mathscr{R} \BV^2(\R^{d_{\ell-1}})$ with norm
\[\left\| g \right\|_{\mathscr{R} \BV^2(\R^{d_{\ell - 1}}; \R^{d_\ell})} = \sum_{m=1}^{d_\ell} \left\| g_m \right\|_{\mathscr{R} \BV^2(\R^{d_{\ell - 1}})}\]
and $\mathscr{L}:\R^{d_L} \times \R^{d_0} \to \R$ is some loss function. For this problem, in \cite[Thm.~3.2]{ParNow22} the authors claim a representer theorem by a recursion argument layer by layer, leading to solutions corresponding to an architecture with at most $N^L (d_1 \cdot \ldots \cdot d_L)$ units.

Given the Cartesian product structure and additiveness of the regularizer over components and layers, we can apply Lemma \ref{lem:additivenorms} to find that the extremal points of the unit ball induced by the seminorm
\[\big(f^{(1)},\ldots, f^{(L)}\big) \mapsto \sum_{\ell = 1}^L \left| f^{(\ell)} \right|_{\mathscr{R} \BV^2(\R^{d_{\ell - 1}}; \R^{d_\ell})}\]
consist on the equivalence classes a single ReLU unit with an affine skip connection (arising from the kernel of $\mathscr{R} \TV^2$, see \cite[Thm.~1 and Rem.~2]{ParNow21}) in one layer and an affine function on all the other ones. However, from this fact we cannot immediately infer anything about \eqref{eq:deeplearning}, because the appearance of the composition $f = f^{(L)} \circ \ldots \circ f^{(1)}$ in the data fitting term makes the problem strongly nonconvex.
\end{rem}

\section{Large families of extremals for matrix norms with symmetries} \label{sec:gennorm}
 In this section, we consider more general matrix norms in which~$|\cdot|_K$ does not have additivity properties allowing to decompose~$\BV(\Omega;\R^n)$ into a direct sum of subspaces, so that Lemma~\ref{lem:additivenorms} and Corollary~\ref{cor:TV12ext} are not applicable. We divide our focus between extremal points for the unit ball $\mathcal{B}_K$ associated to $\TV_K$ as in \eqref{eq:defBK}, and the analogous ball for $\TD_K$ in $\BD(\Omega)$, that is
 \[\mathcal{D}_K := \big\{ [u] \in \BD(\Omega)/\mathcal{A} \;\big\vert\; \TD_K(u) \leq 1 \big\}.\]
 For $\TV_K$ and in the interest of covering commonly used norms, this section puts the focus on classes of matrix norms which are invariant w.r.t to certain orthogonal transformations. More in detail, we call~$|\cdot|_K$~\textit{left orthogonally invariant} if~$|QA|_K=|A|_K$ for all~$Q \in O(n)$ and~$A \in \R^{n \times d}$ while~$|\cdot|_K$ is~\textit{isotropic} if $|AR|_K = |A|_K$ for all rotations $R \in \SO(d)$. Note that Schatten~$p$ norms with $p \in [1,\infty]$, e.g. the nuclear, Frobenius and the spectral norm, as well as the Ky Fan norms are, both, isotropic and left orthogonally invariant.
 
 We provide three main results: 
 First, piecewise constant functions supported on simple sets still represent extremal points of~$\Ext(\mathcal{B}_K)$, assuming $|\cdot|_K$ satisfies either 
 \begin{equation}
 \label{eq:clunkyround}\begin{gathered}|\cdot|_K \text{ is left orthogonally invariant, and } \\
 |e_1 \cdot b||e_1 \otimes \nu|_K = |(e_1 \otimes e_1)(b \otimes \nu)|_K < |b \otimes \nu|_K \\
 \text{ for all }b \in \R^n,\nu \in \R^d \text{ with }|b|=1, |\nu|=1 \text{ and } b \neq e_1,\end{gathered}\end{equation}
 or 
 \begin{equation}
 \label{eq:rankoneiso}|b \otimes \nu|_K = |b|\ \ \text{ for all }b \in \R^n\text{ and }\nu \in \R^d\text{ with }|\nu| = 1,
 \end{equation}
 where $e_1$ denotes the first canonical basis vector in $\R^n$. 

Second, these characteristic functions \emph{do not} yield a full characterization of~$\Ext(\mathcal{B}_K)$ under the second assumption. In this case, we give an infinite family of extremal points which attain more than two values on~$\Omega$. We remark that, on the one hand, norms of the form \eqref{eq:KvKsNorm} with $|\cdot|_{K_v} = |\cdot|$ and $K_s$ arbitrary satisfy \eqref{eq:clunkyround} but not  \eqref{eq:rankoneiso} unless $|\cdot|_{K_s} = |\cdot|$. On the other hand, all Schatten and Ky Fan norms satisfy \eqref{eq:rankoneiso}, but for certain nonsmooth cases like the spectral norm and the $1$-Schatten norm, \eqref{eq:clunkyround} fails. A further remark concerning \eqref{eq:clunkyround} is that orthogonal invariance implies $|(e_1 \otimes e_1)A|_K \leq |A|_K$, since any contraction (and in particular $e_1 \otimes e_1$) is a convex combination of matrices in $O(n)$, see \cite{SauParWil15}. However, the strict inequality is a genuine additional assumption.

Third, we prove that equivalence classes $[w\1_E]$ of piecewise infinitesimally rigid vector fields with two regions are also generically extremal in $\mathcal{D}_K$. We require strict convexity of $|\cdot|_K$ with respect to certain rank-two variations, and the boundary of $E$ to not be completely flat, with this second condition shown to be sharp with a family of counterexamples.
 
Similar to the scalar-valued case, see \eqref{eq:TVKfromnormals}, it will be useful to have a primal expression for~$\TV_K(u)$ and $\TD_K(u)$:
\begin{lemma}\label{lem:TVKprimal}
We have
\begin{equation}\label{eq:TVKprimal}\TV_K(u) = \int_\Omega \left|\frac{Du}{|Du|_F}\right|_K \dd|Du|_F.\end{equation}
\end{lemma}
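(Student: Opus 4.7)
The plan is to prove the two inequalities separately, using the pointwise Fenchel duality
\[
\tr(A^\top B) \leq |A|_{K^\circ}\,|B|_K \quad \text{for all } A,B \in \R^{n\times d}
\]
as the common engine, together with the integration-by-parts formula for $\BV$ functions, namely $\int_\Omega u\cdot\div\Phi \dd x = -\int_\Omega \Phi:dDu$ for $\Phi \in C^1_c(\Omega;\R^{n\times d})$.

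For the easy direction $\TV_K(u) \leq \int_\Omega \bigl|Du/|Du|_F\bigr|_K \dd|Du|_F$, I would take any admissible $\Phi$ with $|\Phi(x)|_{K^\circ}\leq 1$, rewrite the integration-by-parts identity via the Radon--Nikod\'ym decomposition of $Du$ with respect to $|Du|_F$ as
\[
\int_\Omega u \cdot \div\Phi \dd x = -\int_\Omega \tr\!\Big(\Phi(x)^\top \tfrac{Du}{|Du|_F}(x)\Big) \dd|Du|_F(x),
\]
and bound the integrand pointwise by $|\Phi|_{K^\circ}\,|Du/|Du|_F|_K \leq |Du/|Du|_F|_K$. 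Taking the supremum in \eqref{eq:tvdef} yields the inequality. If $u\notin\BV$, both sides are $+\infty$ by convention, so I may assume $u\in\BV$ henceforth.

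For the reverse direction, set $\sigma=Du/|Du|_F$, which is $|Du|_F$-measurable with $|\sigma(x)|_F=1$ for $|Du|_F$-a.e.\ $x$. The key step is to construct a near-optimal test field. I would first use a measurable selection argument applied to the closed-valued multifunction $x \mapsto \{B\in K^\circ : \tr(B^\top\sigma(x)) = |\sigma(x)|_K\}$, whose values are nonempty compact and convex and whose $|Du|_F$-measurability follows from measurability of $\sigma$ together with continuity of the duality pairing. This yields a bounded $|Du|_F$-measurable $\tau\colon\Omega\to K^\circ$ with $\tr(\tau(x)^\top\sigma(x)) = |\sigma(x)|_K$ for $|Du|_F$-a.e.\ $x$.

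Next, I would turn $\tau$ into admissible test fields $\Phi_\e \in C^1_c(\Omega;\R^{n\times d})$ by a two-step approximation. Since $|Du|_F$ is a finite Radon measure, Lusin's theorem furnishes for each $\e>0$ a continuous $\widetilde\tau_\e$ with compact support in $\Omega$ agreeing with $\tau$ off a set of $|Du|_F$-measure at most $\e$, and post-composing with the metric projection onto the convex set $K^\circ$ preserves continuity and forces $|\widetilde\tau_\e|_{K^\circ}\leq 1$ pointwise. Standard mollification with a nonnegative smooth kernel of unit mass then produces $\Phi_\e\in C^\infty_c(\Omega;\R^{n\times d})$, with the constraint surviving by convexity of $K^\circ$. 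Testing \eqref{eq:tvdef} with $-\Phi_\e$ and applying dominated convergence (all integrands are bounded by $|\sigma|_K \in L^1(|Du|_F)$) gives
\[
\TV_K(u) \geq \int_\Omega u\cdot\div(-\Phi_\e) \dd x = \int_\Omega \tr\bigl(\Phi_\e^\top\sigma\bigr) \dd|Du|_F \;\xrightarrow[\e\to 0]{}\; \int_\Omega |\sigma|_K \dd|Du|_F,
\]
which closes the proof. The main obstacle is the combined approximation step: $\tau$ must be made $C^1_c$ without ever violating the pointwise constraint $|\cdot|_{K^\circ}\leq 1$, which is handled by exploiting convexity of $K^\circ$ twice, first in the projection after Lusin and then in the mollification.
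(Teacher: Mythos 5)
Your proof is correct, and the broad scaffold matches the paper's (trivial direction by the pointwise polar inequality; the reverse by building a near-optimal test field via Lusin followed by mollification, using convexity of $K^\circ$ to preserve the constraint). The genuine difference is in how the dual near-maximizer is produced. You first invoke a measurable selection theorem to obtain an exact pointwise dual maximizer $\tau$ for the multifunction $x \mapsto \{B \in K^\circ : \tr(B^\top\sigma(x)) = |\sigma(x)|_K\}$, and only then apply Lusin and project back onto $K^\circ$. The paper avoids measurable selection: it applies Lusin directly to the $K$-normalized polar $M := (Du/|Du|_F)\,|Du/|Du|_F|_K^{-1}$, and on the resulting compact set of continuity it uses a finite cover by balls on which $M$ varies by less than $\e$, picking for each ball a single dual matrix $B \in K^\circ$ with $M\cdot B \geq 1-\e$ there; this gives a piecewise-constant $N_\e$ with the constraint satisfied exactly, which is then mollified. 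The two outcomes are the same; yours is more systematic but imports heavier machinery (a Kuratowski--Ryll-Nardzewski type selection), whereas the paper's construction is more elementary. One small point worth tightening in your write-up: in the final display you treat the Lusin parameter and the mollification parameter as a single $\e$; to invoke dominated convergence cleanly one should send the mollification parameter to zero with $\e$ fixed (uniform convergence to $\widetilde\tau_\e$), and only then send the Lusin $\e \to 0$, using the uniform bound $|\widetilde\tau_\e|_{K^\circ} \leq 1$ and convergence of $\widetilde\tau_\e$ to $\tau$ in $|Du|_F$-measure.
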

Analogously, we also have
\begin{equation}\label{eq:TDKprimal}\TD_K(u) = \int_\Omega \left|\frac{\E u}{|\E u|_F}\right|_K \dd|\E u|_F.\end{equation}
\begin{proof}
The proof of \eqref{eq:TVKprimal} follows completely analogously to \cite[Prop.~9]{AmbAziBreUns24}, whose main steps we indicate here for convenience. One can apply the Lusin theorem to the $\Rnd$-valued function
\[M := \frac{Du}{|Du|_F} \left| \frac{Du}{|Du|_F}\right|^{-1}_K,\]
obtaining for each $\e$ a set $\Omega_\e$ on which $M$ is continuous and the integral of the measure inside \eqref{eq:TVKprimal} on $\Omega \setminus \Omega_\e$ is less than $\e$. This continuity allows to find functions $N_\e$ satisfying $M \cdot N_\e \geq 1- \e$ on $\Omega_\e$ while taking only finitely many values and satisfying the constraint $|\cdot|_{\Kc} \leq 1$ pointwise, which are then mollified to make them admissible in the definition of $\TV_K$. The proof of \eqref{eq:TDKprimal} is completely analogous.
\end{proof}

The next step is to realize that if we can express a piecewise constant function~$u_0$ as a nontrivial convex combination of functions with the same value of $\TV_K$, then those functions need to be themselves piecewise constant:
\begin{lemma}\label{lem:pwccomb}
Let $u_0 \in \BV(\Omega; \R^n)$ be piecewise constant, and expressible as a nontrivial convex combination
\begin{equation}\label{eq:convcombgame}u_0 = \lambda v + \big(1 - \lambda\big)w, \quad \text{ with } \TV_K(v)=\TV_K(w)=\TV_K(u_0) \text{ and }\lambda \in (0,1).\end{equation}
Then $v$ and $w$ must also be piecewise constant and $J_v \cup J_w = J_{u_0}$ (mod $\Ha^{d-1}$).
\end{lemma}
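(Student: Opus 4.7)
The plan is to work with a common dominating measure and exploit the primal representation \eqref{eq:TVKprimal}, which by the Radon-Nikodym chain rule extends to any measure $\mu$ dominating $|Du|_F$ in the form $\TV_K(u) = \int |U|_K \dd \mu$, where $U$ denotes the matrix-valued density of $Du$ with respect to $\mu$. Setting $\mu := |Du_0|_F + |Dv|_F + |Dw|_F$ and writing $U_0, V, W$ for the densities of $Du_0, Dv, Dw$, the linearity of the distributional derivative gives $U_0 = \lambda V + (1-\lambda) W$ $\mu$-a.e.

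Next, I would decompose $\mu = \mu^j + \mu^d$ into its jump (concentrated on a $(d-1)$-rectifiable set) and diffuse (Lebesgue absolutely continuous plus Cantor) components. Since $u_0$ is piecewise constant, $Du_0$ is purely jump and hence $U_0 = 0$ holds $\mu^d$-a.e. Combining this with the hypothesis $\TV_K(u_0) = \lambda \TV_K(v) + (1-\lambda) \TV_K(w)$ and the pointwise triangle inequality $|U_0|_K \leq \lambda|V|_K + (1-\lambda)|W|_K$ yields
\[0 \leq \int \bigl(\lambda |V|_K + (1-\lambda)|W|_K\bigr) \dd \mu^d \leq 0,\]
which forces $V = W = 0$ $\mu^d$-a.e. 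Therefore $Dv$ and $Dw$ have no diffuse part, and by the standard characterization of piecewise constant $BV$ functions as those whose derivative is concentrated on their jump set, both $v$ and $w$ are piecewise constant.

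For the jump-set identity, the inclusion $J_{u_0} \subset J_v \cup J_w$ holds $\Ha^{d-1}$-a.e., since at every point outside $J_v \cup J_w$ the functions $v$ and $w$ are approximately continuous, so is $u_0 = \lambda v + (1-\lambda) w$. The reverse inclusion follows from equality $\mu^j$-a.e.\ in the triangle inequality, namely $|\lambda V + (1-\lambda) W|_K = \lambda|V|_K + (1-\lambda)|W|_K$, which one reads off from equality of integrals combined with pointwise $\leq$. On the set $A := (J_v \cup J_w) \setminus J_{u_0}$ one has $U_0 = \lambda V + (1-\lambda) W = 0$, whence $|V|_K = |W|_K = 0$ $\mu$-a.e.\ on $A$. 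On the other hand, $v$ and $w$ being piecewise constant gives $|Dv|_F = |[v]| \Ha^{d-1} \mres J_v$ and similarly for $w$, so the density $V$ is nonzero $\Ha^{d-1}$-a.e.\ on $J_v$ while $\mu$ dominates $|[v]| \Ha^{d-1} \mres J_v$; this forces $\Ha^{d-1}(A \cap J_v) = 0$. The symmetric argument for $J_w$ yields $\Ha^{d-1}(A) = 0$.

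The delicate point is the invariance of the primal expression \eqref{eq:TVKprimal} under a change of dominating measure, which allows $\TV_K$ to decompose additively across the mutually singular jump and diffuse parts of $\mu$ and localizes the convexity argument on each part. The remainder is a straightforward analysis of equality in the triangle inequality.
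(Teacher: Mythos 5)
Your proof is correct and takes essentially the same route as the paper: the mass/convexity bookkeeping based on the primal representation \eqref{eq:TVKprimal} kills the absolutely continuous and Cantor parts of $Dv$ and $Dw$ (and, via saturation of the triangle inequality, the jump parts off $J_{u_0}$), after which the characterization of piecewise constant functions and the approximate-continuity argument give the jump-set identity, with your common dominating measure $\mu$ being only a repackaging of the paper's splitting of $\TV_K(v)$ and $\TV_K(w)$ into the four pieces carried by $J_{u_0}$, $J_v\setminus J_{u_0}$ (resp.\ $J_w\setminus J_{u_0}$), the Lebesgue part and the Cantor part. The one point stated too casually is the ``standard characterization'' of piecewise constant functions: the relevant result \cite[Thm.~4.23]{AmbFusPal00} is formulated for scalar-valued bounded functions, so, as in the paper, it should be applied componentwise to truncations (cf.\ \cite[Lem.~5.1]{DalToa22}) before concluding that $v$ and $w$ are piecewise constant.
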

\begin{proof}
Without loss of generality we can take $\TV_K(u_0)=1$. We have the decomposition \cite[Sec.~3.9]{AmbFusPal00} into absolutely continuous, jump and Cantor parts for $u_0$:
\begin{equation}\label{eq:decompder}Du_0 = \nabla u_0 \mathcal{L}^d + D^j u_0 + D^c u_0 = \nabla u_0 \mathcal{L}^d +\big[(u_0^+ - u_0^-) \otimes \nu_{u_0}\big] \Ha^{d-1} \mres J_{u_0} + D^c u_0,\end{equation}
and similarly for $v$ and $w$. Now, by the definition of piecewise constant, $\nabla u_0$ and $D^c u_0$ vanish. In view of the expression \eqref{eq:convcombgame} as convex combination and by testing on sets of Hausdorff dimension $d-1$, on which the Lebesgue measure $\mathcal{L}^d$ and the Cantor parts $D^c v$ and $D^c w$ all vanish \cite[Prop.~3.92]{AmbFusPal00}, we must have
\begin{gather*}\big[(u_0^+ - u_0^-) \otimes \nu_{u_0}\big] \Ha^{d-1}\mres J_{u_0} = \lambda D^j v + (1-\lambda) D^j w \\ \text{with }\int_{J_{u_0}} \big|(u_0^+ - u_0^-) \otimes \nu_{u_0}\big|_K \dd \Ha^{d-1} = 1,\end{gather*}
which together with
\begin{align*}
    1 &= \int_\Omega |\nabla v|_K \dd x+ |D^c v|_K(\Omega) + |D^j v|_K(J_{u_0})+ |D^j v|_K(\Omega \setminus J_{u_0})  \\
    &= \int_\Omega |\nabla w|_K \dd x + |D^c w|_K(\Omega) + |D^j w|_K(J_{u_0})+ |D^j w|_K(\Omega \setminus J_{u_0})
\end{align*}
implies that we must have 
\[|D^j v|_K(J_{u_0})=|D^j w|_K(J_{u_0})=1,\] 
as well as
\begin{gather*}\nabla v = 0, \ D^c v = 0, \ D^j v \mres (J_v \setminus J_{u_0})=0, \\  \nabla w = 0, \ D^c w = 0 \text{ and } D^j w \mres (J_w \setminus J_{u_0})=0,\end{gather*}
since otherwise we would necessarily have $\TV_K(v) > 1$ and $\TV_K(w) >1$. This implies that $v$ and $w$ must be piecewise constant using \cite[Thm.~4.23]{AmbFusPal00}, since even though this result is stated for scalar-valued functions in $L^\infty$, we can just apply it to truncations of components $(C \wedge v^i ) \vee (-C), (C \wedge w^i ) \vee (-C)$ for all $i = 1,\ldots, n$ and all $C>0$ to obtain (as observed in the proof of \cite[Lem.~5.1]{DalToa22}) that $v^i,w^i$ must be piecewise constant, implying that $v,w$ themselves are also piecewise constant. Moreover, by the expression of $u_0$ as a convex combination we must have $J_{u_0} \subset J_v \cup J_w$ (mod $\Ha^{d-1}$). On the other hand, we deduce that we must also have $J_v \cup J_w \subset J_{u_0}$ (mod $\Ha^{d-1}$), since~$D^j v \mres (J_v \setminus J_{u_0})$ and~$D^j w \mres (J_w \setminus J_{u_0})$ vanish, respectively.

From this, we conclude $J_v \cup J_w = J_{u_0}$ (mod $\Ha^{d-1}$).
\end{proof}

\begin{lemma}\label{lem:pwccombBD}
Let $u_0 \in \BD(\Omega)$ be piecewise infinitesimally rigid and such that there is a nontrivial convex combination
\begin{equation}\label{eq:convcombBDgame}u_0 = \lambda v + \big(1 - \lambda\big)w, \quad \text{ with } \TD_K(v)=\TD_K(w)=\TD_K(u_0) \text{ and }\lambda \in (0,1).\end{equation}
Then $v$ and $w$ must be piecewise infinitesimally rigid and $J_v \cup J_w = J_{u_0}$ (mod $\Ha^{d-1}$).
\end{lemma}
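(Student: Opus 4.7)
The plan is to mirror the argument used in Lemma~\ref{lem:pwccomb}, replacing the decomposition of the full gradient measure $Du$ into absolutely continuous, jump, and Cantor parts with the analogous decomposition of the symmetrized gradient,
\[\E u = \mathcal{E} u \, \L^d + E^j u + E^c u = \mathcal{E} u \, \L^d + \big[(u^+ - u^-) \odot \nu_u\big] \Ha^{d-1}\mres J_u + E^c u,\]
valid for $u \in \BD(\Omega)$, together with the primal representation \eqref{eq:TDKprimal} of $\TD_K$ in place of \eqref{eq:TVKprimal}. Without loss of generality we may normalize $\TD_K(u_0) = 1$.

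First, from $u_0$ being piecewise infinitesimally rigid we get $\mathcal{E} u_0 = 0$ and $E^c u_0 = 0$, so that $\E u_0 = \big[(u_0^+ - u_0^-) \odot \nu_{u_0}\big]\Ha^{d-1}\mres J_{u_0}$. Writing $\E u_0 = \lambda \E v + (1-\lambda) \E w$ and testing against $\Ha^{d-1}$-dimensional sets, where the absolutely continuous and Cantor parts of $\E v$ and $\E w$ both vanish by the BD analog of \cite[Prop.~3.92]{AmbFusPal00}, forces
\[\big[(u_0^+ - u_0^-) \odot \nu_{u_0}\big]\Ha^{d-1}\mres J_{u_0} = \lambda E^j v + (1-\lambda) E^j w,\]
while the Lebesgue and Cantor parts of $\E v$ and $\E w$, as well as the jump contributions on $J_v \setminus J_{u_0}$ and $J_w \setminus J_{u_0}$, can only add to $\TD_K$.

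Second, using \eqref{eq:TDKprimal} and the constraint $\TD_K(v) = \TD_K(w) = 1$, the same chain of inequalities as in the proof of Lemma~\ref{lem:pwccomb}, now applied to $|\E\cdot|_K$-total variation measures, yields
\[\mathcal{E} v = 0,\ E^c v = 0,\ E^j v \mres (J_v \setminus J_{u_0}) = 0,\]
and the analogous equalities for $w$; any strict contribution on $\Omega \setminus J_{u_0}$ would push $\TD_K(v)$ or $\TD_K(w)$ strictly above $1$. Together with the convex combination, this also gives $J_{u_0} \subset J_v \cup J_w$ modulo $\Ha^{d-1}$, hence $J_v \cup J_w = J_{u_0}$ modulo $\Ha^{d-1}$.

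Third, it remains to deduce from $\mathcal{E} v = 0$, $E^c v = 0$, and $J_v \subset J_{u_0}$ (modulo $\Ha^{d-1}$) that $v$ is itself piecewise infinitesimally rigid (and analogously for $w$). Here the BV proof relied directly on \cite[Thm.~4.23]{AmbFusPal00}; the counterpart we need is the structural statement that a BD function whose symmetrized derivative is concentrated on the rectifiable boundary of a Caccioppoli partition must be piecewise infinitesimally rigid with respect to a refinement of that partition. This can be obtained either by invoking the structure theory for $\SBD$ functions with jump set of finite $\Ha^{d-1}$ measure, or more concretely by observing that on each connected component of the open set $\Omega \setminus \overline{J_{u_0}}$ (which is essentially the union of the pieces of $u_0$), the distributional symmetrized gradient of $v$ vanishes, so that by the classical Liouville rigidity $v$ coincides there with an element of $\mathcal{A}$; the Caccioppoli character of $\{E_i\}$ underlying $u_0$ then guarantees that these pieces assemble into a partition with $\sum_i \Per(E_i, \Omega) \leq \Ha^{d-1}(J_{u_0}) < +\infty$.

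The main obstacle is precisely this last step: unlike in the scalar case, the identification of "BD functions with purely jump symmetrized derivative supported on a rectifiable set" with "piecewise infinitesimally rigid functions" is not quite as packaged as \cite[Thm.~4.23]{AmbFusPal00}, and will have to be assembled from Korn/Liouville rigidity on each component of $\Omega \setminus \overline{J_{u_0}}$ together with the finite perimeter structure of $J_{u_0}$, which controls the resulting partition.
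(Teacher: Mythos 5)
Your Steps 1 and 2 are exactly the paper's argument: decompose $\E u$ into absolutely continuous, jump and Cantor parts as in \cite[Sec.~4]{AmbCosDal97}, use the primal representation \eqref{eq:TDKprimal}, and run the same comparison of total variation measures as in Lemma \ref{lem:pwccomb} to force $\mathcal{E}v=0$, $\E^c v=0$, $\E^j v \mres (J_v\setminus J_{u_0})=0$ (and likewise for $w$), whence $J_v\cup J_w=J_{u_0}$ (mod $\Ha^{d-1}$). For the final step, the ``packaged'' statement you are missing does exist and is precisely what the paper cites: the piecewise rigidity theorem \cite[Thm.~A.1]{ChaGiaPon07}, which says that a function $u\in\BD(\Omega)$ with $\E^c u=0$ and vanishing absolutely continuous part of $\E u$ (together with $\Ha^{d-1}(J_u)<+\infty$, which you have because $J_v\subset J_{u_0}$ mod $\Ha^{d-1}$ and the partition underlying $u_0$ is of Caccioppoli type) is piecewise infinitesimally rigid. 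So your first suggested route (the $\SBD$/structure-theoretic one) is the intended one, and with that citation your proof coincides with the paper's.

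Your fallback ``concrete'' argument, however, has a genuine gap and should not be relied on. The jump set $J_{u_0}$ of a piecewise infinitesimally rigid function is only countably $(d-1)$-rectifiable with finite $\Ha^{d-1}$ measure; its topological closure $\overline{J_{u_0}}$ can have positive Lebesgue measure and can even be all of $\overline{\Omega}$ (the reduced boundaries of a Caccioppoli partition may be dense, already for piecewise constant scalar functions with countably many pieces), so the open set $\Omega\setminus\overline{J_{u_0}}$ on which you want to apply Liouville rigidity may fail to cover $\Omega$ up to a null set, or may even be empty. Moreover, even when it is large, its connected components need not be sets of finite perimeter, so they do not directly assemble into the Caccioppoli partition required by the definition of piecewise infinitesimal rigidity; the bound $\sum_i\Per(E_i,\Omega)\leq \Ha^{d-1}(J_{u_0})$ you invoke is not automatic for such components. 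This is exactly why \cite[Thm.~A.1]{ChaGiaPon07} is a nontrivial theorem rather than a combination of Liouville's theorem with perimeter bookkeeping; quote it (as the paper does) instead of the component-wise argument.
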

\begin{proof}
For $u \in \BD(\Omega)$, one has \cite[Sec.~4]{AmbCosDal97} the decomposition analogous to \eqref{eq:decompder}:
\[\begin{aligned}\E u &= \frac12\big(\nabla u + (\nabla u)^\top\big) \mathcal{L}^d + \E^j u + \E^c u \\
&= \frac12\big(\nabla u + (\nabla u)^\top\big) \mathcal{L}^d +\big[(u^+ - u^-) \odot \nu_u\big] \Ha^{d-1} \mres J_{u} + \E^c u.\end{aligned}\]
Moreover, by \cite[Thm.~A.1]{ChaGiaPon07} functions $u \in \BD(\Omega)$ for which $\E^c u = 0$ and $Eu := \nabla u + (\nabla u)^\top = 0$ must be piecewise infinitesimally rigid. Using these results, one can follow an analogous proof as for Lemma \ref{lem:pwccomb}.
\end{proof}
 
\begin{thm}\label{thm:indicatrixextremals}
Let~$|\cdot|_K$ satisfy either \eqref{eq:clunkyround} or \eqref{eq:rankoneiso}, and define the vector norm
\begin{align*}
    |\nu|_k=|e_1 \otimes \nu |_K \quad \text{for all} \quad \nu \in \R^d, 
\end{align*}
where $e_1 \in \R^n$ is the first canonical basis vector. Then there holds
\begin{align}\label{eq:uEb}
    \left\{\,\left\lbrack \frac{1}{\TV_k(\1_E)} b\1_{E} \right \rbrack\;\middle\vert\;E \subset \Omega~\text{simple},\ |E| \in \big(0,|\Omega|\big),~b \in \R^n,~|b|_k=1\,\right\} \subset \Ext(\mathcal{B}_K).
\end{align}

\end{thm}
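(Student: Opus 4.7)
The plan is to combine the rank-one factorization of $|\cdot|_K$ granted by the hypotheses, the piecewise constant reduction in Lemma~\ref{lem:pwccomb}, and strict convexity of the Euclidean norm on $\R^n$ to rule out non-trivial convex combinations of $[u_0]$.

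The first step is to establish that, under either~\eqref{eq:clunkyround} or~\eqref{eq:rankoneiso}, the norm $|\cdot|_K$ factorizes on rank-one matrices as $|d\otimes\nu|_K=|d|\,|\nu|_k$ for all $d\in\R^n$ and $\nu\in\R^d$. Under~\eqref{eq:rankoneiso} this is direct and in particular forces $|\cdot|_k$ to coincide with the Euclidean norm on $\R^d$; under~\eqref{eq:clunkyround} one picks $Q\in\O(n)$ with $Qd=|d|e_1$ and invokes left orthogonal invariance. A direct consequence is the identity
\[
\TV_K(d\1_E)=\int_{\pa E}|d\otimes\nu_E|_K\dd\Ha^{d-1}=|d|\,\TV_k(\1_E)
\]
for all $d\in\R^n$, which yields in particular $\TV_K(u_0)=1$ (with $|b|_k$ read as the Euclidean norm of $b\in\R^n$, which is the unique normalization consistent with the factorization).

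Now assume $[u_0]=\lambda[v]+(1-\lambda)[w]$ with $\TV_K(v),\TV_K(w)\leq 1$ and $\lambda\in(0,1)$. Fixing representatives so that $u_0=\lambda v+(1-\lambda)w$ pointwise, convexity of $\TV_K$ forces $\TV_K(v)=\TV_K(w)=1$, so Lemma~\ref{lem:pwccomb} applies and yields that $v$ and $w$ are piecewise constant with $J_v\cup J_w\subset\pa E$ modulo $\Ha^{d-1}$. The main technical obstacle is then to upgrade this localization of the jump set to the two-region structure $v=d_v\1_E+c_v\1_\Omega$ (and analogously for $w$) for constants $d_v,c_v\in\R^n$. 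For this I plan to examine, for each value $c$ attained by the Caccioppoli partition of $v$, the level set $A_c=\{v=c\}$: since $\pa A_c\subset J_v\subset\pa E\subset E^{(1/2)}$, the intersection formula for reduced boundaries of sets of finite perimeter (e.g.\ Maggi, Proposition~16.4) gives $\pa(A_c\cap E)\cap E^{(1)}=\emptyset$. A short computation using Federer's theorem then yields the perimeter additivity
\[
\Per(E,\Omega)=\Per(A_c\cap E,\Omega)+\Per(E\setminus A_c,\Omega),
\]
so indecomposability of $E$ forces $|A_c\cap E|\in\{0,|E|\}$. Iterating over the countably many values of $v$, and repeating the argument on the indecomposable complement $\Omega\setminus E$, yields the desired two-region structure.

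Once this structure is in hand, the functions $\1_E$ and $\1_\Omega$ are linearly independent and comparing their coefficients in $u_0=\lambda v+(1-\lambda)w$ produces
\[
\frac{b}{\TV_k(\1_E)}=\lambda d_v+(1-\lambda)d_w.
\]
Step~1 gives $|d_v|=|d_w|=1/\TV_k(\1_E)$ from the normalization $\TV_K(v)=\TV_K(w)=1$, so equality holds in the triangle inequality for the Euclidean norm on $\R^n$. Strict convexity of that norm then forces $d_v=d_w=b/\TV_k(\1_E)$, whence $[v]=[w]=[u_0]$, proving extremality.
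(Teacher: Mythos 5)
Your argument is correct, and while it shares the paper's skeleton (normalize so that $\TV_K(u_{E,b})=1$, force $\TV_K(v)=\TV_K(w)=1$ by convexity, apply Lemma~\ref{lem:pwccomb}, upgrade the competitors to functions constant on each side of $\pa E$, then a strict-convexity argument on the jump coefficients), it departs from the paper's proof at two points. For the upgrade step the paper notes $|Du_i^j|(E^{(1)})=|Du_i^j|(E^{(0)})=0$ and cites a constancy theorem for scalar $\BV$ functions on indecomposable sets, whereas you argue by hand on the level sets $A_c$ of the Caccioppoli partition: $\pa A_c\subset J_v\subset\pa E\subset E^{(1/2)}$ modulo $\Ha^{d-1}$ kills the interface between $A_c\cap E$ and $E\setminus A_c$ inside $E^{(1)}$, giving $\Per(E,\Omega)=\Per(A_c\cap E,\Omega)+\Per(E\setminus A_c,\Omega)$ and hence, by indecomposability of $E$ and of $\Omega\setminus E$, constancy on both sides; this is a valid self-contained replacement for the cited constancy theorem (your ``$\pa(A_c\cap E)\cap E^{(1)}=\emptyset$'' should be read modulo $\Ha^{d-1}$, the relevant formulas being those of \cite[Thm.~16.3]{Mag12}, and the omitted perimeter computation is exactly of the type written out in the proof of Theorem~\ref{thm:extremalsscalar}). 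For the endgame the paper distinguishes the two hypotheses, concluding under \eqref{eq:rankoneiso} by strict convexity of $|\cdot|$ and under \eqref{eq:clunkyround} by a contradiction argument with the projection $\pi_b$ followed by an appeal to the scalar Theorem~\ref{thm:extremalsscalar}; you instead use that in both cases the norm factorizes on rank-one matrices, $|d\otimes\nu|_K=|d|\,|\nu|_k$ --- under \eqref{eq:clunkyround} this needs only the left orthogonal invariance, since $Q(d\otimes\nu)=(Qd)\otimes\nu$ --- so that $\TV_K(v)=\TV_K(w)=1$ gives $|d_v|=|d_w|=\TV_k(\1_E)^{-1}$ and Euclidean strict convexity closes both cases at once. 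Your route is shorter, avoids re-invoking Theorem~\ref{thm:extremalsscalar}, and shows in passing that the strict inequality in \eqref{eq:clunkyround} is not actually needed for this inclusion, only the invariance half; also, your reading of $|b|_k$ on $\R^n$ as the Euclidean norm agrees with the paper's implicit usage and is the only normalization placing $u_{E,b}$ on the unit sphere of $\TV_K$, as extremality requires.
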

\begin{proof}
If $|\cdot|_K$ satisfies \eqref{eq:rankoneiso} then we immediately deduce that $|\cdot|_k = |\cdot|$ and $|b \otimes \nu_E|_K=|b|=1$ for $\nu_E = D\1_E/|D\1_E|$, implying that for
\[u_{E,b} := \frac{1}{\TV_k(\1_E)} b\1_{E} \ \text{ with }\ Du_{E,b} = \frac{1}{\TV_k(\1_E)} \big(b \otimes \nu_E\big) \Ha^{d-1} \mres \pa E\]
and $E,b$ as in \eqref{eq:uEb}, we have by using \eqref{eq:TVKprimal} that
\[\TV_K(u_{E,b}) = \frac{1}{\TV_k(\1_E)} \int_{\pa E} |b \otimes \nu_E|_K \dd \Ha^{d-1} = \frac{1}{\TV_k(\1_E)}\Ha^{d-1}\big(\pa E\big) = \frac{\Per(E, \Omega)}{\Per(E,\Omega)} = 1.\]
Moreover, we claim that $\TV_K(u_{E,b})=1$ also holds if \eqref{eq:clunkyround} is assumed. To see this, first we show that the left orthogonal invariance of~$|\cdot|_K$ implies~$\TV_K(Qu)=\TV_K(u)$ for all~$u \in \BV(\Omega;\R^n)$,~$Q \in O(n)$ and where left multiplication has to be understood pointwise. Indeed, recalling~\eqref{eq:Kduality}, we get
\begin{align*}
    |QA|_{\Kc} &= \sup \left\{ \tr\big(A^\top Q^\top B\big)\,\middle\vert\, B \in \Rnd, \ |B|_K \ls 1\right\}\\&= \sup \left\{ \tr\big(A^\top Q^\top B\big)\,\middle\vert\, B \in \Rnd, \ |Q^\top B|_K \ls 1\right\} \\&=
    \sup \left\{ \tr\big(A^\top  \Tilde{B}\big)\,\middle\vert\, \Tilde{B} \in \Rnd, \ |\tilde B|_K \ls 1\right\}= |A|_{\Kc}
\end{align*}
for all~$A \in \Rnd$ and~$Q \in O(n)$. Hence,~$|\cdot|_{\Kc}$ is left orthogonally invariant as well. This implies
\begin{align*}
\TV_K(Qu) &= \sup \left\{ \int_\Omega (Qu) \cdot \div \Phi ~\mathrm{d}x \,\middle\vert\,  \Phi \in C^1_c(\Omega;\Rnd), \, |\Phi(x)|_{\Kc} \leq 1 \text{ for all } x\in \Omega \right\} \\
&= \sup \left\{ \int_\Omega u \cdot \div Q^\top \Phi~\mathrm{d}x \,\middle\vert\,  \Phi \in C^1_c(\Omega;\Rnd), \, |Q^\top \Phi(x)|_{\Kc} \leq 1 \text{ for all } x\in \Omega \right\}
\\
&= \sup \left\{ \int_\Omega u \cdot \div \tilde \Phi~\mathrm{d}x \,\middle\vert\,  \tilde \Phi \in C^1_c(\Omega;\Rnd), \, |\tilde \Phi(x)|_{\Kc} \leq 1 \text{ for all } x\in \Omega \right\}=\TV_K(u).
\end{align*}
Now, let~$u_{E,b} = b\1_E / \TV_k(\1_E)$ for $b \in \R^n$ with $|b|_k=1$. To verify that $\TV_K(u_{E,b})=1$, we denote by $R_{b}\in \SO(n)$ any rotation transforming $b$ to a vector parallel to $e_1$. We have then that~$\TV_K(u_{E,b})=\TV_K(R_b u_{E,b}) = \TV_K(u_{E,e_1})$, to the right hand side of which we can apply \eqref{eq:TVKprimal} and \eqref{eq:TVKfromnormals} to get
\[\TV_K(u_{E,e_1}) = \frac{1}{\TV_k(\1_E)} \int_{\pa E} |e_1 \otimes \nu_E|_K \dd \Ha^{d-1} = \int_{\pa E} |\nu_E|_k \dd \Ha^{d-1} = \frac{\TV_k(\1_E)}{\TV_k(\1_E)}=1.\]
To arrive at extremality of $u_{E,b}$, assume that we have 
\begin{equation}\label{eq:indcomb}u_{E,b} = \lambda u_1 + (1-\lambda) u_2 \quad\text{with }\TV_K(u_1)=\TV_K(u_2)=1 \text{ and }\lambda \in (0,1).\end{equation}
In this situation, we can apply Lemma \ref{lem:pwccomb} to conclude that $u_1,u_2$ are piecewise constant with $J_{u_1} \cup J_{u_2}=\pa E$ (mod $\Ha^{d-1}$). Now by the Federer theorem \cite[Thm.~16.2]{Mag12} we have $\Ha^{d-1}(\pe E \setminus \pa E)=0$ for $\pe E = \Omega \setminus (E^\du \cup E^\dz)$ and since the $u_i$ are piecewise constant, we have that $D u_i$ is absolutely continuous with respect to $\Ha^{d-1} \mres \pa E$ for $i=1,2$. In particular, this implies that for each component $u_i^j$ with $i=1,2$ and $j=1,\ldots,n$ we have $|Du_i^j|(E^\du)=0$ and $|Du_i^j|(E^\dz)=0$, and since these sets are indecomposable because $E$ is assumed simple, we can apply a constancy theorem for scalar $\BV$ functions (see \cite[Prop.~2.12]{DolMul95} or \cite[Rem.~2]{AmbCasMasMor01}) to conclude that
\[u_i = u_{E,b_i} + a_i \quad\text{with } a_i,b_i \in \R^n \quad\text{for }i=1,2,\]
for which by the invariance with respect to addition of constant functions (i.e., taking adequate representatives) we may assume $a_1 = a_2 = 0$. Moreover, for these we have
\[1=\TV_K(u_i) = \frac{|b_i|_k}{\TV_k(\1_E)} \int_{\partial^\ast E} \left| \frac{b_i}{|b_i|_k}\right|_K \dd \Ha^{d-1} = |b_i|_k,\]
in addition to
\begin{equation}\label{eq:TVintjumpconv}1=\TV_K(u_{E,b})=\frac{1}{\TV_k(\1_E)}\int_{\pa E} \big|\big( \lambda b_1 + (1-\lambda) b_2 \big) \otimes \nu_E(x) \big|_K \dd \Ha^{d-1}(x).\end{equation}
In case \eqref{eq:rankoneiso} is satisfied, the integral in the right hand side of \eqref{eq:TVintjumpconv} above reduces to 
\[|\lambda b_1 + (1-\lambda) b_2| \, \Ha^{d-1}(\pa E),\] and we conclude directly by noting that all unit vectors are extremal for the $\ell^2$ ball in $\R^n$, so that $b_1=b_2=b$. 

If instead we have \eqref{eq:clunkyround}, we argue by contradiction and assume that at least one of the $b_i$ is not parallel to $b$, which without loss of generality we can assume to be~$b_1$. Letting~$R_b$ as above denote a rotation that maps~$b$ to a vector parallel to~$e_1$, we then have~$R_b b_1 \not \in \Span(e_1)$. We compose $u_{E,b}$ on the left with the pointwise projection $\pi_{b}a=(a \cdot b) b / |b|^2$ onto the line $\R b \subset \R^n$, which leaves $u_{E,b}$ invariant. Noticing that 
\[\pi_{b} = R_b^\top \pi_{e_1} R_b = \frac{b}{|b|} \otimes \frac{b}{|b|},\] 
using the left orthogonal invariance, the triangle inequality, \eqref{eq:clunkyround} combined with $\Ha^{d-1}(\partial^\ast E) > 0$, and again left orthogonal invariance,
\[\begin{aligned}1&=\TV_K(u_{E,b})=\TV_K( \pi_{b} u_{E,b})=\TV_K( \pi_b u_{E,b})\\
&=\frac{1}{\TV_k(\1_E)}\int_{\pa E} \big|(e_1 \otimes e_1)\left[\big( \lambda R_b b_1 + (1-\lambda) R_b b_2 \big) \otimes \nu_E(x) \right] \big|_K \dd \Ha^{d-1}(x)\\
& \leq \frac{1}{\TV_k(\1_E)} \Bigg( \lambda \int_{\pa E} \big| (e_1 \otimes e_1) \big(R_b b_1 \otimes \nu_E(x)\big) \big|_K \dd \Ha^{d-1}(x) 
\\&\qquad\qquad\qquad + (1-\lambda)\int_{\pa E} \big| (e_1 \otimes e_1) \big(R_b b_2 \otimes \nu_E(x)\big) \big|_K \dd \Ha^{d-1}(x) \Bigg)\\
& < \frac{1}{\TV_k(\1_E)} \Bigg( \lambda \int_{\pa E} \big|\big(R_b b_1 \otimes \nu_E(x)\big) \big|_K \dd \Ha^{d-1}(x) 
\\&\qquad\qquad\qquad + (1-\lambda)\int_{\pa E} \big|\big(R_b b_2 \otimes \nu_E(x)\big) \big|_K \dd \Ha^{d-1}(x) \Bigg)\\
& = \lambda \TV_K(R_b u_{E,b_1}) + (1-\lambda) \TV_K(R_b u_{E,b_2}) \\
& = \lambda \TV_K(u_{E,b_1}) + (1-\lambda) \TV_K(u_{E,b_2})= 1,\end{aligned}\]
which is indeed a contradiction. We have obtained that $b_1$ and $b_2$ must both be parallel to $b$, so that we can now apply the scalar-valued anisotropic case of Theorem \ref{thm:extremalsscalar} on $\TV_k$ to conclude.
\end{proof}

We now turn our attention to vector fields in $\BD(\Omega)$, for which in particular $n=d$, and where additional difficulties appear owing to the symmetrized gradient having a larger kernel, the infinitesimal rigid motions appearing in \eqref{eq:infrigid}. Specifically, we prove that piecewise infinitesimally rigid functions with two regions (the analog of indicatrices in the $\TV$ case) arising from a simple finite perimeter set $E$ whose boundaries are not completely flat, also give rise to extremal points of sublevel sets of $\TD_K$ in $\BD(\Omega)/\mathcal{A}$. To profit from this assumption, we will use that by De Giorgi's structure theorem \cite[Thm.~3.59]{AmbFusPal00} $\pa E$ is countably $(d-1)-$rectifiable, which in turn implies \cite[Thm.~2.76]{AmbFusPal00} that there are countably many Lipschitz $(d-1)-$graphs $\Gamma_\ell$ such that
\begin{equation}\label{eq:rectifiability}\pa E = F \cup \bigcup_{\ell=1}^\infty \Gamma_\ell \quad \text{with }\Ha^{d-1}(F)=0, \ \text{ and }\Ha^{d-1}(\Gamma_\ell)>0 \ \text{ for all }\ell \in \N.\end{equation}

\begin{thm}\label{prop:indicatrixextremalsBD}
Assume that $d \geq 2$, and that for all $a,b \in \R^d$ linearly independent and $\nu \in \R^d$ with $|\nu|=1$, we have that the function
\begin{equation}\label{eq:symrankoneconv}
(0,1) \ni \lambda \mapsto \big|\big((1-\lambda)a + \lambda b\big) \odot \nu\big|_K
\end{equation}
is strictly convex. Let $E \subset \Omega$ be a simple set with $|E| \in \big(0,|\Omega|\big)$, and
\begin{equation}\label{eq:notinplane}
\Ha^{d-1}\big( \pa E \setminus \{ x \in \Omega \,\vert\, a \cdot x = c \}\big) >0 \quad\text{for all }\, a \in \R^d\setminus\{0\} \text{ and } c \in \R.\end{equation}
Then, the equivalence classes of functions of the form $u_{E,w} := w \1_E$ for $w \in \mathcal{A}$ are extremal in 
\[ \big\{ [u] \in \BD(\Omega)/\mathcal{A} \;\big\vert\; \TD_K(u) \leq \TD_K(u_{E,w})\big\}.\]
\end{thm}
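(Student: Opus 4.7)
The strategy mirrors the proof of Theorem \ref{thm:indicatrixextremals}, with Lemma \ref{lem:pwccombBD} replacing Lemma \ref{lem:pwccomb} and additional care taken to handle the larger kernel $\mathcal{A}$ and the symmetrized tensor structure of the jump part of $\E u$.

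Assume $[u_{E,w}] = t[v_1] + (1-t)[v_2]$ with $\TD_K(v_i) = \TD_K(u_{E,w})$ and $t \in (0,1)$. By Lemma \ref{lem:pwccombBD}, we may choose $v_1, v_2$ piecewise infinitesimally rigid with $J_{v_1}\cup J_{v_2} = J_{u_{E,w}} \subset \partial^*E$, so that $\E v_i$ vanishes on both $E^\du$ and $E^\dz$. The simplicity of $E$ yields indecomposability of $E$ and $\Omega\setminus E$, and a $\BD$ constancy argument (cf.\ \cite[Thm.~A.1]{ChaGiaPon07} and its proof) then forces $v_i$ to agree with an element of $\mathcal{A}$ on each of those two sets. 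Choosing suitable representatives modulo $\mathcal{A}$, we may write $v_i = w_i\1_E$ with $w_i\in \mathcal{A}$. Inserting these into $u_{E,w} \equiv tv_1 + (1-t)v_2$ (mod $\mathcal{A}$) gives $(w - tw_1 - (1-t)w_2)\1_E \in \mathcal{A}$; evaluating on $E^\dz$, which has positive Lebesgue measure thanks to $|E|<|\Omega|$, forces the associated affine function to vanish on a set of positive measure and thus identically. This produces the identity $w = tw_1 + (1-t)w_2$ of affine functions on $\R^d$.

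Next, using \eqref{eq:TDKprimal}, the equality of the three $\TD_K$ values and convexity of $|\cdot|_K$ upgrade the pointwise convex inequality to an equality
\[ |w(x)\odot \nu_E(x)|_K = t|w_1(x)\odot \nu_E(x)|_K + (1-t)|w_2(x)\odot \nu_E(x)|_K \]
for $\Ha^{d-1}$-a.e.\ $x \in \partial^*E$. The strict convexity hypothesis \eqref{eq:symrankoneconv} then forces $w_1(x)$ and $w_2(x)$ to be linearly dependent at $\Ha^{d-1}$-a.e.\ such $x$, and the equality case in the one-variable convex inequality further shows the corresponding scaling factor to be nonnegative wherever it is defined.

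The central obstacle is to promote this pointwise parallelism on $\partial^*E$ to global proportionality of $w_1, w_2$ as elements of $\mathcal{A}$. The bivector-valued polynomial $x\mapsto w_1(x)\wedge w_2(x)$ has degree at most two and vanishes $\Ha^{d-1}$-a.e.\ on $\partial^*E$. Using the rectifiability \eqref{eq:rectifiability} and continuity of the polynomial, it vanishes on each Lipschitz graph $\Gamma_\ell$, so that $\partial^*E$ is contained up to an $\Ha^{d-1}$-null set in the real algebraic variety $V = \{w_1 \wedge w_2 = 0\}$. A careful analysis of the leading quadratic term $(A_1 x)\wedge (A_2 x)$, whose structure is very restricted because $A_1, A_2$ are antisymmetric, shows that any $(d-1)$-dimensional irreducible component of $V$ must be a hyperplane, and that $V$ can contain at most one such component (the quadratic components of $w_1\wedge w_2$ cannot share a product of two distinct linear factors, nor a squared linear factor, without forcing several coordinate functions to be proportional to a single linear polynomial). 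The hypothesis \eqref{eq:notinplane} rules out that $\partial^*E$ lies in a single hyperplane, while lower-dimensional parts of $V$ have zero $\Ha^{d-1}$-measure and cannot accommodate the mass of $\partial^*E$. The only remaining possibility is $V=\R^d$, i.e.\ $w_1\wedge w_2 \equiv 0$, from which a direct case analysis using $w_1,w_2\in \mathcal{A}$ produces $w_1 = \gamma w_2$ as affine functions for some $\gamma \geq 0$. Inserting this into $\TD_K(v_1)=\TD_K(v_2)$ yields $\gamma=1$, so $w_1 = w_2 = w$ and $[v_1]=[v_2]=[u_{E,w}]$. The sharpness of \eqref{eq:notinplane} is confirmed by the counterexamples mentioned in the excerpt, arising precisely when $\partial^*E$ is allowed to lie in a hyperplane.
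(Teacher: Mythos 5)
Your first half runs parallel to the paper's proof: Lemma \ref{lem:pwccombBD}, rigidity of $v_1,v_2$ on $E^\du$ and $E^\dz$ via \cite[Thm.~A.1]{ChaGiaPon07}, reduction to $v_i=w_i\1_E$ with $w_i\in\mathcal{A}$, the identity $w=tw_1+(1-t)w_2$, and the upgrade of the integral identity for $\TD_K$ to a pointwise equality which, by \eqref{eq:symrankoneconv}, forces $w_1(x)$ and $w_2(x)$ to be linearly dependent for $\Ha^{d-1}$-a.e.\ $x\in\pa E$ (the paper argues the contrapositive at a single point of a Lipschitz graph in \eqref{eq:rectifiability}, but this is the same substance). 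Where you genuinely diverge is the passage from pointwise parallelism on $\pa E$ to proportionality of $w_1,w_2$ as functions: the paper selects $d$ points of $\pa E$ avoiding finitely many hyperplanes and the null sets $F$, $G$ (possible by \eqref{eq:notinplane}) and exploits the skew-symmetry identities $x^\top A_i x=0$ to show the ratio $c(\cdot)$ is constant, whereas you pass through the real algebraic variety $V=\{w_1\wedge w_2=0\}$.

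This is where your argument has a gap as written. The decisive claim --- that unless $w_1\wedge w_2\equiv 0$, the $(d-1)$-dimensional part of $V$ is contained in a \emph{single} hyperplane --- is exactly what makes \eqref{eq:notinplane} applicable (if $V$ could contain two distinct hyperplanes, or an irreducible quadric hypersurface, then $\pa E$ could sit inside $V$ without violating \eqref{eq:notinplane}), yet you only assert it via ``a careful analysis of the leading quadratic term,'' and the parenthetical about shared linear factors does not amount to a proof. The claim is in fact true and your route can be completed: if every component of $w_1\wedge w_2$ (degree at most $2$) were divisible by a quadratic $q$ (either irreducible or a product of two independent linear forms), then $w_1\wedge w_2=q\,\Omega_0$ with $\Omega_0$ a constant bivector, and comparing quadratic parts gives $(A_1x)\wedge(A_2x)=q_2(x)\,\Omega_0$; if $\Omega_0\neq 0$, then on the open set $\{q_2\neq 0\}$ both $A_1x$ and $A_2x$ lie in the fixed $2$-plane of $\Omega_0$, hence both ranges (and, by skew-symmetry, kernels) coincide with that plane, forcing $A_2=\gamma A_1$ and thus $(A_1x)\wedge(A_2x)\equiv 0$, a contradiction; so $\Omega_0=0$ and $w_1\wedge w_2\equiv 0$. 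Finally, your closing ``direct case analysis'' showing that $w_1\wedge w_2\equiv 0$ on $\R^d$ implies $w_1=\gamma w_2$ as functions (and then $\gamma=1$) is also left unproved; it requires essentially the computation the paper carries out in its Cases 1 and 2 (there only on points of $\pa E$, which is harder than your situation, but the work still has to be done). So: viable and genuinely different in its second half, but the two asserted steps above must be supplied for the proof to stand; note also that the paper's point-selection argument avoids the algebraic machinery entirely at comparable length.
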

\begin{proof}
By homogeneity we can assume that $\TD_K(u_{E,w})=1$, and as for Theorem \ref{thm:indicatrixextremals}, assume for a contradiction that we can write
\begin{equation}\label{eq:indcombBD}u_{E,w} = \lambda u_1 + (1-\lambda) u_2 \quad\text{with }u_1 \neq u_2,\ \TD_K(u_1)=\TD_K(u_2)=1 \text{ and }\lambda \in (0,1).\end{equation}
Then, using Lemma \ref{lem:pwccombBD} we have that $u_1, u_2$ must be piecewise infinitesimally rigid. In this setting, the constancy theorem is not applicable, since it's proved using the coarea formula, but we can take advantage of \cite[Thm.~A.1]{ChaGiaPon07} being formulated on arbitrary open sets to conclude that $u_1$ and $u_2$ are affine on both $E^\du$ and $E^\dz$. Therefore, we can write
\[u_i = u_{E,w_i} + v_i \quad \text{with }w_i,v_i \in \mathcal{A},\]
and as before pick representatives such that $v_i=0$, which in turn forces $w_i \neq 0$. We then have, using \eqref{eq:TDKprimal}, that
\begin{equation}\label{eq:TDintjumpconv}\begin{aligned}
1=\TD_K(u_{E,w})&=\int_{\pa E} |w(x) \odot \nu_E(x)|_K \dd \Ha^{d-1}(x)\\
&=\int_{\pa E} \big|\big( \lambda w_1(x) + (1-\lambda) w_2(x) \big) \odot \nu_E(x) \big|_K \dd \Ha^{d-1}(x)
\end{aligned}\end{equation}
in which, in contrast to the situation in \eqref{eq:TVintjumpconv}, the functions $w_i$ and $w$ are of course not necessarily constant. We can assume that the $w_i$ are linearly independent as functions, since otherwise we immediately reach $w_1=w_2$ and consequently, the contradiction $u_1=u_2$. 

Moreover, expressing the reduced boundary $\pa E$ as in \eqref{eq:rectifiability}, if we find any point $x_0 \in \pa E \setminus F$ for which the vectors $w_1(x_0), w_2(x_0)$ are linearly independent, then by continuity the functions $w_1,w_2$ also remain linearly independent in $B(x_0, r)$ for some $r>0$, while $x_0 \in \Gamma_\ell$ for some $\ell$ implies
\begin{equation}\label{eq:linindepoften}\Ha^{d-1}\big(\pa E \cap B(x_0, r)\big) > 0.\end{equation}
Further, for any point $x \in \pa E \cap B(x_0, r)$, using the strict convexity property in \eqref{eq:symrankoneconv} we can directly estimate
\begin{equation}\label{eq:integrandstrictconv}\begin{aligned}
|w(x) \odot \nu_E(x)|_K &= \big|\big( \lambda w_1(x) + (1-\lambda) w_2(x) \big) \odot \nu_E(x) \big|_K\\
&< \lambda |w_1(x) \odot \nu_E(x)|_K + (1-\lambda) |w_2(x) \odot \nu_E(x)|_K,
\end{aligned}\end{equation}
which when used back in \eqref{eq:TDintjumpconv} and taking into account \eqref{eq:linindepoften} gives 
\[1=\TD_K(u_{E,w}) < \lambda \TD_K(u_{E,w_1}) + (1-\lambda) \TD_K(u_{E,w_2}) = \lambda + (1-\lambda) = 1,\]
the desired contradiction.

It remains to show that $x \mapsto w_i(x) = A_i x + b_i \in \mathcal{A}$,~$w_i \neq 0$, being independent as functions implies the existence of such an $x_0$. For this, let us assume that for every point $x \in \pa E \setminus F$ the vectors $w_1(x), w_2(x)$ are linearly dependent. A nonzero skew-symmetric matrix must necessarily have rank at least $2$, since if we take $a,b \in \R^d$, skew-symmetry of $b \otimes a$ would imply that for every $i \in \{1, \ldots, d\}$ we have $a_i b_i = 0$, which then gives that all off-diagonal elements $b_i a_j = - b_j a_i$ with $i \neq j$ must vanish as well. This means, since it cannot happen that $A_2=0$ and $b_2 =0$ simultaneously, that defining
\[G := \big\{x \in \R^d \,\big\vert\, A_2 x + b_2 = 0\big\}, \text{ we have }\Ha^{d-1}(G)=0,\]
and by linear dependence we can find a function $c: \pa E \setminus (F \cup G) \to \R$ such that
\begin{equation}\label{eq:funnylindep}A_1 x + b_1 = c(x) \big[A_2 x + b_2\big].\end{equation}
Now, we notice that translating $\Omega$ and shifting the functions $w_i$ accordingly does not affect the linear structure of $\BD(\Omega)$, so extremality and linear independence are preserved (even if the particular values in $A_i$ and $b_i$ change). This allows us, taking into account that $\Ha^{d-1}(G)=0$ but $\Ha^{d-1}(\pa E \setminus F) > 0 $, to assume that $0 \in \pa E \setminus ( F \cup G )$ to simplify our computations. In particular we then have $b_1 = c(0) b_2$, so we can write
\begin{equation}\label{eq:funnylindepspec}A_1 x = c(x) A_2 x + \big[c(x)-c(0)\big]b_2.\end{equation}
To proceed further, we now make a case distinction:

\textbf{Case 1} Assume that~$b_2 \neq 0$. In this case,~\eqref{eq:notinplane} ensures that 
\[\Ha^{d-1}(\pa E \setminus ( F \cup G \cup \{\,x\;|\;x^\top b_2=0\,\} ))>0.\]
Now, we select linearly independent~$x_1,\dots,x_d \in \R^d $ with
\begin{align*}
    x_1 \in \pa E \setminus ( F \cup G \cup \{\,x\;|\;x^\top b_2=0\,\} )
\end{align*}
as well as
\begin{align*}
    x_{i+1} \in \pa E \setminus ( F \cup G \cup \{\,x\;|\;x^\top b_2=0\,\} \cup \Span\{x_1,\ldots,x_i\}  )
\end{align*}
for~$i=1,\dots, d-1$. This construction is again well-defined due to~\eqref{eq:notinplane}, since for $i=1,\dots,d-1$ we have that $\Span\{x_1,\ldots,x_i\}$ is contained in a hyperplane. Using the skew-symmetry of $A_1$, ~\eqref{eq:funnylindepspec}, and the skew-symmetry of~$A_2$ yields
\begin{align*}
    0= x^\top_i A_1 x_i= c(x_i) x^\top_i A_2 x_i+\big[c(x_i)-c(0)\big]\big(x_i^\top b_2\big)=\big[c(x_i)-c(0)\big]\big(x_i^\top b_2\big),   
\end{align*}
and~$c(x_i)=c(0)$,~$i=1,\dots,d$. Thus, evaluating~\eqref{eq:funnylindepspec} again for every~$x_i$, we get~$A_1 x_i= c(0)A_2 x_i$ for every~$i=1,\dots,d$ and, since~$\{x_1,\dots,x_d\}$ is a basis of~$\R^d$, finally~$A_1=c(0) A_2$. Because we already saw that~$b_1=c(0)b_2$, this means $w_1$ and~$w_2$ are linearly dependent as functions.

\textbf{Case 2} If~$b_1=b_2=0$, using~\eqref{eq:notinplane} and with similar arguments as in Case 1, we sequentially select a family $\{x_1,\dots,x_d\}$ with~$\Span\{x_1,\ldots,x_d\}=\R^d$ and~$x^\top_i A_2 x_{i+1} \neq 0$ for all~$i=1,\dots,d-1$. Using~\eqref{eq:funnylindepspec}, skew-symmetry of~$A_1$, again~\eqref{eq:funnylindepspec}, and finally skew-symmetry of~$A_2$, we get
\begin{align*}
  c(x_{i+1}) x^\top_{i} A_2 x_{i+1}= x^\top_{i} A_1 x_{i+1}=-x^\top_{i+1} A_1 x_{i}= -c(x_{i}) x^\top_{i+1} A_2 x_{i}=c(x_{i}) x^\top_{i} A_2 x_{i+1}      
\end{align*}
and thus by construction~$c(x_1)=\cdots=c(x_d)$. Denoting this common constant by~$c$, we get~$A_1 x_i= c A_2 x_i$ for~$i=1,\dots,d$ implying that~$A_1= cA_2$.
\end{proof}

\begin{example}
We demonstrate with an explicit counterexample that the condition \eqref{eq:notinplane}, expressing that the boundary of $E$ should not be completely flat, cannot be dropped. Let
\[\Omega = (-1,1)^2, \quad E = \{(x_1,x_2) \in \Omega \,\vert\, x_2 < 0\},\quad\text{ and }\ u_{E,e_2} = e_2 \1_E.\]
Further, let us define a function $z: \Omega \to \R^2$ as
\[z(x)=\begin{cases} A x &\text{ if }x_2 \geq 0\\ -A x &\text{ if }x_2 < 0,\end{cases} \quad \text{ where }\ A=\begin{pmatrix} 0 & 1 \\ -1 & 0 \end{pmatrix},\]
and finally
\[u_1 := u_{E,e_2} + \frac{1}{4}z, \quad u_2 := u_{E,e_2} - \frac{1}{4}z, \quad \text{ so that }\ u_{E,e_2} = \frac{1}{2}u_1 + \frac{1}{2}u_2.\]
Then, we can directly compute
\begin{equation}\begin{aligned}
\TD_K(u_{E,e_2}) &= \int_{(-1,1) \times \{0\}} |e_2 \otimes e_2|_K\dd \Ha^1(x),\\
\TD_K(u_1) &= \int_{(-1,1) \times \{0\}} \left(1+\frac{x_1}{2}\right) |e_2 \otimes e_2|_K\dd \Ha^1(x),\\
\TD_K(u_2) &= \int_{(-1,1) \times \{0\}} \left(1-\frac{x_1}{2}\right) |e_2 \otimes e_2|_K\dd \Ha^1(x)
\end{aligned}\end{equation}
and by symmetry of the domain of integration all of these are equal, from which we conclude that $u_{E,e_2}$ is not extremal. We remark that this does not depend on the particular value of $|e_2 \otimes e_2|_K$ or any other properties of $|\cdot|_K$. In particular it is straightforward to check, by applying adequate rotations and translations to extensions by zero of $u_1$ and $u_2$, that for any bounded domain $\Omega \subset \R^d$ with $d \geq 2$ and any halfspace $H = \{ x \in \R^d \, \vert \, x \cdot \nu < 0\}$, $\nu \in \S^{d-1}$ with $H \cap \Omega \neq \emptyset$, the function $u_{H,\nu}$ is not extremal.
\end{example}

\begin{rem}
It could be tempting to attempt to relax \eqref{eq:symrankoneconv} into strict convexity of both of the functions
 \begin{align}
 \label{eq:rankoneconv1} (0,1) \ni \lambda &\mapsto \big|\big((1-\lambda)a + \lambda b\big) \otimes \nu\big|_K\ \ \text{ and }\\
 \label{eq:rankoneconv2} (0,1) \ni \lambda &\mapsto \big|\nu \otimes \big((1-\lambda)a + \lambda b\big)\big|_K,
 \end{align}
but this would not allow us to conclude, since in that case we would have to use the triangle inequality, which would in turn prevent recovering $|w_i \odot \nu_E|$ in the right hand side of \eqref{eq:integrandstrictconv}. On the other hand, using the positive one-homogeneity of $|\cdot|_K$ it is straightforward to check that the strict convexity in \eqref{eq:rankoneconv1} is less restrictive than assuming \eqref{eq:rankoneiso}, and one could prove Theorem \ref{thm:indicatrixextremals} assuming the former. We have chosen to focus on \eqref{eq:clunkyround} and \eqref{eq:rankoneiso} because in those cases we can relate the total variation of a function with two values with the (possibly anisotropic) perimeter induced by the norm $|\cdot|_k$ on $\R^d$, which makes the similarities and differences with scalar-valued functions more transparent. In any case, let us note that \eqref{eq:clunkyround} and \eqref{eq:rankoneiso} are still formulated around rank-one matrices, so it does not seem to be possible to deduce \eqref{eq:symrankoneconv} from them either.
\end{rem}

Returning to the total variation, we know that for the scalar-valued case, sets that are not simple cannot give rise to extremal points by adjusting the function values on different indecomposable components to find nontrivial convex combinations. However, in the vectorial case for norms satisfying \eqref{eq:rankoneiso}, such a construction is not possible and extremals may have more than two values. As a (non-exhaustive) example of this phenomenon, we write out explicit assumptions to find extremals with three values in the next result.

\begin{thm}\label{thm:thehouseawayswins}
Let $|\cdot|_K$ satisfy \eqref{eq:rankoneiso}. Assume that $b_1, b_2 \in \R^n$ with $b_1,b_2 \neq 0$ are not collinear and $E_1, E_2$ are two simple disjoint sets with $|E_1|,|E_2| \in \big(0,|\Omega|\big)$,
\begin{equation}\label{eq:bdyparts}\begin{gathered}
\mu_1 := \Per\big(E_1, E_2^{(0)}\cap \Omega\big) >0, \quad \mu_2 := \Per\big(E_2, E_1^{(0)} \cap \Omega\big) >0\text{ and}
\\ \mu_- := \Ha^{d-1}\left(\big\{x \in \pa E_1 \cap \pa E_2 \,\big\vert\, \nu_{E_1}(x) = -\nu_{E_2}(x)\big\}\cap \Omega \right) >0.\end{gathered}\end{equation}
Then, the equivalence class of the function $u_0=b_1 \1_{E_1} + b_2 \1_{E_2}$ is extremal in \[\big[\TV_K(u_0)\big] \mathcal{B}_K = \big\{[u] \in \BV(\Omega; \R^n)/(\R^n \1_\Omega) \,\big|\, \TV_K(u) \ls \TV_K(u_0)\big\}.\]
\end{thm}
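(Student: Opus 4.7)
The plan is to follow the overall strategy of Theorem~\ref{thm:indicatrixextremals}, reducing to piecewise constant functions via Lemma~\ref{lem:pwccomb} and extracting rigidity from the equality case of the triangle inequality; the novel ingredient will be to exploit the linear independence of $b_1, b_2$ to match up the information obtained on the pieces of $J_{u_0}$ coming from $\pa E_1$ and from $\pa E_2$.

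First, using Lemma~\ref{lem:TVKprimal} together with~\eqref{eq:rankoneiso} and the fact that disjointness of $E_1, E_2$ forces $\nu_{E_1} = -\nu_{E_2}$ on their common reduced boundary, I would compute
\[\TV_K(u_0) = |b_1|\mu_1 + |b_2|\mu_2 + |b_1 - b_2| \mu_-.\]
Then assume $[u_0] = \lambda [v] + (1-\lambda)[w]$ with $\TV_K(v) = \TV_K(w) = \TV_K(u_0)$ and $\lambda \in (0,1)$, pick representatives with $u_0 = \lambda v + (1-\lambda) w$ pointwise, and apply Lemma~\ref{lem:pwccomb} to conclude that $v, w$ are piecewise constant with $J_v \cup J_w = J_{u_0}$ modulo $\Ha^{d-1}$. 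Since $J_v \cup J_w \subset \pa E_1 \cup \pa E_2$ and $E_1, E_2$ are simple (so $E_1^\du, E_2^\du$ are indecomposable), a constancy argument analogous to the one used in the proof of Theorem~\ref{thm:indicatrixextremals} yields constants $v_1, v_2 \in \R^n$ and $c_j \in \R^n$ such that $v = v_1$ on $E_1$, $v = v_2$ on $E_2$, and $v = c_j$ on each indecomposable component $R_j$ of $R := \Omega \setminus (E_1 \cup E_2)$ (and analogous values $w_1, w_2, d_j$ for $w$). The equality $\TV_K(u_0) = \lambda \TV_K(v) + (1-\lambda) \TV_K(w)$, together with the identity $u_0^+ - u_0^- = \lambda(v^+ - v^-) + (1-\lambda)(w^+ - w^-)$ and the pointwise triangle inequality in $\R^n$, forces $v^+ - v^-$ and $w^+ - w^-$ to be nonnegative scalar multiples of $u_0^+ - u_0^-$ at $\Ha^{d-1}$-a.e.\ point of $J_{u_0}$. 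Translating this on the three types of boundary piece, and using that $\mu_- > 0$, I obtain $v_1 - v_2 = \alpha(b_1 - b_2)$ with $\alpha \geq 0$, and $v_i - c_j = s_j^{(i)} b_i$ with $s_j^{(i)} \geq 0$ whenever $\Ha^{d-1}(\pa R_j \cap \pa E_i) > 0$ (and analogous relations for $w$).

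I expect the main obstacle to be the claim that simpleness of $E_1$ and $E_2$ forces every indecomposable component $R_j$ of $R$ with $|R_j| > 0$ to satisfy $\Ha^{d-1}(\pa R_j \cap \pa E_i) > 0$ for both $i = 1, 2$. To prove it, suppose for contradiction that $\Ha^{d-1}(\pa R_{j_0} \cap \pa E_2) = 0$ for some $j_0$. Since distinct indecomposable components of $R$ cannot share reduced boundary (which would violate perimeter additivity on $R$), the splitting
\[\Omega \setminus E_1 = R_{j_0} \sqcup \Big(E_2 \cup \bigcup_{j \neq j_0} R_j\Big)\]
would have additive perimeters, while both pieces would have positive Lebesgue measure (the second because $|E_2| > 0$); this contradicts the indecomposability of $\Omega \setminus E_1$ that follows from $E_1$ being simple. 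A symmetric argument with the roles of $E_1, E_2$ exchanged rules out $\Ha^{d-1}(\pa R_j \cap \pa E_1) = 0$. This step is where all the hypotheses interact nontrivially and where the failure analogous to counterexamples for non-simple $E_i$ is avoided.

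Given that every $R_j$ touches both $E_1$ and $E_2$, for each $j$ both equations $v_1 - c_j = s_j b_1$ and $v_2 - c_j = s'_j b_2$ hold; subtracting gives $s_j b_1 - s'_j b_2 = v_1 - v_2 = \alpha(b_1 - b_2)$, and linear independence of $b_1, b_2$ forces $s_j = s'_j = \alpha$, so $c_j = v_1 - \alpha b_1$ is independent of $j$. Consequently $v = \alpha u_0 + (v_1 - \alpha b_1) \1_\Omega$, so $[v] = \alpha [u_0]$, and then $\TV_K(v) = \TV_K(u_0) > 0$ together with $\alpha \geq 0$ forces $\alpha = 1$, yielding $[v] = [u_0]$. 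The identical argument applied to $w$ gives $[w] = [u_0]$, establishing extremality.
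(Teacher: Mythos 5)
Your proposal is correct, and it follows the paper's skeleton — reduce to piecewise constant competitors via Lemma \ref{lem:pwccomb}, use constancy on indecomposable pieces, then exploit strict convexity of the Euclidean norm (available through \eqref{eq:rankoneiso}) together with the non-collinearity of $b_1,b_2$ — but the execution differs in two genuine ways. First, where the paper works with the integrated identity \eqref{eq:totallength} and applies strict convexity to the three aggregate convex combinations \eqref{eq:vwconvcomb}--\eqref{eq:vwconvcombnorms} (forcing $b_i^v = r b_i$, $b_i^w$ proportional as well, then $r=s$ and finally $r=1$ from the normalization), you localize: equality of the total variations upgrades the pointwise triangle inequality to an equality $\Ha^{d-1}$-a.e.\ on $J_{u_0}$, so the jumps of $v$ and $w$ are nonnegative multiples of those of $u_0$ on each interface, and the scalars are then matched using linear independence. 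Both routes use the same convexity input; yours trades the bookkeeping with $\mu_1,\mu_2,\mu_-$ for a pointwise argument. Second, and more substantively, you decompose the background region $R=\Omega\setminus(E_1\cup E_2)$ into indecomposable components and prove — using simplicity of \emph{both} $E_1$ and $E_2$, the fact that distinct components share no reduced boundary, and the measure-theoretic formula for the perimeter of a disjoint union — that every component is adjacent (in positive $\Ha^{d-1}$-measure) to both $\pa E_1$ and $\pa E_2$; this is what lets you force all the constants $c_j$ to coincide and conclude $v=\alpha u_0+\mathrm{const}$. The paper passes over this point, writing the competitors directly as $b_1^v\1_{E_1}+b_2^v\1_{E_2}$, i.e.\ implicitly taking them constant on all of $R$, so your component-adjacency lemma supplies a justification for that step rather than merely replicating it. The final normalization ($[v]=\alpha[u_0]$, $\alpha\ge 0$, $\TV_K(v)=\TV_K(u_0)>0$ forcing $\alpha=1$) is a clean substitute for the paper's computation with $r$ and $(1-\lambda r)/(1-\lambda)$.
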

\begin{proof}
Assuming a nontrivial decomposition $u_0 = \lambda v + (1-\lambda)w$ as in \eqref{eq:convcombgame}, we can apply Lemma \ref{lem:pwccomb} to see that both $v$ and $w$ are piecewise constant and \[J_v \cup J_w = J_{u_0} = \pa E_1 \cup \pa E_2  \quad (\text{mod }\Ha^{d-1}).\]
Therefore, as in the proof of Theorem \ref{thm:indicatrixextremals}, by taking adequate representatives for the quotient we can write
\[v = b^v_1 \1_{E_1} + b^v_2 \1_{E_2} \text{ and } w = b^w_1 \1_{E_1} + b^w_2 \1_{E_2},\]
with 
\begin{equation}\label{eq:totallength}\begin{aligned}
&\mu_1 |b_1| + \mu_2 |b_2| + \mu_{-} |b_1-b_2| \\&\quad= \mu_1 |b^v_1| + \mu_2 |b^v_2| + \mu_{-} |b^v_1-b^v_2| = \mu_1 |b^w_1| + \mu_2 |b^w_2| + \mu_{-} |b^w_1-b^w_2| = 1
\end{aligned}\end{equation}
for $\mu_1,\mu_2,\mu_-$ as in \eqref{eq:bdyparts}, and where we have used \cite[Thm.~16.3]{Mag12} for expressing the different boundary terms, the assumptions on $E_1, E_2$, and the property \eqref{eq:rankoneiso} assumed for the norm, so that 
\[|(b_1 - b_2) \otimes \nu_{E_1}|_K = |b_1 - b_2|\] 
independently of the direction of $\nu_{E_1}$, and similarly for the other two terms.

Moreover, by the expression as convex combination \eqref{eq:convcombgame} we know that
\begin{equation}\label{eq:vwconvcomb}\begin{gathered}b_1 = \lambda b^v_1 + (1-\lambda) b_1^w, \quad b_2 = \lambda b^v_2 + (1-\lambda) b_2^w \text{ and }\\ (b_1 - b_2) = \lambda (b^v_1 - b^v_2) + (1-\lambda) (b_1^w - b_2^w),\end{gathered}\end{equation}
implying
\begin{equation}\label{eq:vwconvcombnorms}\begin{gathered}|b_1| \ls \lambda |b^v_1| + (1-\lambda) |b^w_1|, \quad |b_2| \ls \lambda |b^v_2| + (1-\lambda) |b^w_2| \text{ and }\\ |b_1 - b_2| \ls \lambda |b^v_1 - b^v_2| + (1-\lambda) |b_1^w - b_2^w|,\end{gathered}\end{equation}
and by strict convexity of $|\cdot|$, these inequalities are strict unless the two vectors in the corresponding convex combination (and in consequence, the result) are multiples of each other. But if any of them were not an equality we would contradict \eqref{eq:totallength}, so we have for some $r, s \in \R$ that 
that $b^v_1 =  r b_1$ and $b^v_2 = s b_2$, which using the first two equalities of \eqref{eq:vwconvcomb} gives rise to
\[b_1^w = \frac{1 - \lambda r}{1- \lambda}b_1, \quad \text{and} \quad b_2^w = \frac{1 - \lambda s}{1- \lambda}b_2,\]
so that the third equality of \eqref{eq:vwconvcomb} becomes
\[b_1 - b_ 2 = \lambda \big( rb_1 - sb_2) + (1-\lambda) \left(  \frac{1 - \lambda r}{1- \lambda}b_1 - \frac{1 - \lambda s}{1- \lambda}b_2\right).\]
But for the second line of \eqref{eq:vwconvcombnorms} to hold with equality, again to avoid a contradiction with \eqref{eq:totallength}, we must have that the three points appearing above must also be multiples of each other. In particular $rb_1 - sb_2$ must be parallel to $b_1-b_2$, which since $b_1$ and $b_2$ are not collinear, forces $r=s$. But then, plugging in the representations for $b_1^v,b_2^v,b_1^w,b_2^w$ into \eqref{eq:totallength}, we see that we must have simultaneously
\[|r|=1, \ \text{ and }\ \left|\frac{1-\lambda r}{1-\lambda}\right|,\]
which forces $r=1$.
\end{proof}

\begin{rem}
We note that \cite{AmbAziBreUns24} is also centered on Schatten norms, and their Proposition 23 has a similar flavor, in that continuous piecewise affine functions (in their second derivative setting) are extremal for $\TV^2$ if and only if they are with respect to variations of the coefficients while keeping the jump set of the first derivative fixed. Here we make the observation that for the total variation of vector valued functions we end up with more extremals than what one would at first expect. This stands in contrast to the scalar case, the separable case of Lemma \ref{lem:additivenorms}, or the situation considered in \cite{AmbAziBreUns24}. In this last case, continuity of the piecewise affine functions plays a crucial role, since it imposes that the difference of the gradients on either side of an interface must be proportional to the normal to it.
\end{rem}

\begin{rem}\label{rem:polyhedralvalues}
In Theorem \ref{thm:thehouseawayswins} we have assumed that for rank-one matrices the matrix norm used satisfies $|b \otimes \nu|_k = |b|$. In contrast, for a norm leading to the sum of the componentwise variations, see Corollary \ref{cor:TV12ext}, one does not observe the same phenomenon in which functions with more than two values could be extremal. It is reasonable to ask what the boundary is between these two situations, and specifically whether these more complicated extremals can appear when using a norm of the type \eqref{eq:KsKvNorm} with $K_s$ the Euclidean ball in $\R^d$ and $K_v$ a polyhedron in $\R^n$. To see that they can, notice that the proof of Theorem \ref{thm:thehouseawayswins} only used strict convexity of the vector norm at the vectors $b_1, b_2$ and $b_1 - b_2$, so it is also applicable as long as those three vectors are extremal points of a rescaling of $K_v$. A straightforward example is to consider $n=2$ and $K_v$ the regular octagon with vertices 
\[y_j := \left(\cos\left(\frac{2\pi j}{8}\right), \sin\left(\frac{2\pi j}{8}\right)\right)^\top\] 
for $j = 0,\ldots,7$. In this case, for any $j$ the difference $y_{j} - y_{j-2}$ (with the indexing counted modulo $8$) is proportional to a vertex of the octagon, and hence extremal in a ball of $|\cdot|_{K_v}$.
\end{rem}

One might also wonder whether results analogous to Theorem \ref{thm:thehouseawayswins} also hold for the case of $\TD_K$. The presence of the symmetrized gradient makes the situation more involved because of possible cancellations and because it precludes the simplifications afforded by the condition \eqref{eq:rankoneiso}, so the given proof is not applicable to that situation. We think that the existence of piecewise infinitesimally rigid extremals with more than two regions is nevertheless quite plausible but do not pursue this further, due to the increased complexity of the ensuing computations coupled with the more limited interest for such a result.

\section{Unit radial vector fields}\label{sec:hedgehog}

In \cite{AmbBreCon23} it is proved that a function whose graph is a spherically symmetric cone is an extremal for the Schatten $\TV^2$ ball. Below we show that its derivative (a `hedgehog' vector field) is also extremal for the total variation defined using the Frobenius norm, when $n=d=2$. Since we only treat this case, in this section we denote it just by $\TV$.

\begin{prop}\label{prop:hedgehogTV}
Let $\Omega = B(0,1) \subset \R^2$, $n=2$, and $\TV$ be defined with the Frobenius norm. Then the equivalence class of
\begin{align*}
    u_H(x):=\begin{cases}\frac{x}{|x|} & \text{ if } x \neq (0,0) \\ (0,0 ) & \text{ if } x = (0,0)\end{cases}
\end{align*}
is extremal in \[\big\{ [u] \in \BV(\Omega; \R^2)/(\R^2 \1_{\Omega}) \,|\, \TV(u) \ls \TV(u_H)\big\}.\]
\end{prop}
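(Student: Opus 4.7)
My plan is to show that any convex decomposition $[u_H]=\lambda[v]+(1-\lambda)[w]$ with $\TV(v),\TV(w)\leq\TV(u_H)$ forces $[v]=[w]=[u_H]$.

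First I compute $\TV(u_H)$. In polar coordinates $(r,\theta)$ with $e_r=(\cos\theta,\sin\theta)$, $e_\theta=(-\sin\theta,\cos\theta)$, a direct calculation gives $\nabla u_H(x)=|x|^{-1}\,e_\theta\otimes e_\theta$, a rank-one matrix with $|\nabla u_H(x)|_F=1/|x|$, which is integrable on $B(0,1)\subset\R^2$. Therefore $u_H\in W^{1,1}(\Omega;\R^2)\subset\BV$ and by Lemma \ref{lem:TVKprimal},
\[\TV(u_H)=\int_{B(0,1)}|x|^{-1}\,\dd x=2\pi.\]

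Next I translate the decomposition into pointwise information. Convexity of $\TV$ together with $\TV(v),\TV(w)\leq\TV(u_H)$ forces the equalities $\TV(v)=\TV(w)=\TV(u_H)$ and the stronger measure identity $|Du_H|_F=\lambda|Dv|_F+(1-\lambda)|Dw|_F$ (since the subadditive inequality $|Du_H|_F\leq\lambda|Dv|_F+(1-\lambda)|Dw|_F$ cannot be strict on any set of positive measure without contradicting the total variations). Because $|Du_H|_F$ is absolutely continuous with respect to $\L^2$, so are $|Dv|_F,|Dw|_F$, placing $v,w\in W^{1,1}$. Applying the strict convexity of the Frobenius norm at the polars gives measurable densities $a,b\geq 0$ with $\nabla v=a\,\nabla u_H$, $\nabla w=b\,\nabla u_H$ and $\lambda a+(1-\lambda)b=1$ a.e.

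Third, I exploit the rank-one structure. Since $\nabla v(x)=(a/|x|)\,e_\theta\otimes e_\theta$ annihilates $e_r$, one has $\partial_r v\equiv 0$, hence $v(x)=V(\theta(x))$ depends only on the polar angle; analogously $w(x)=W(\theta(x))$. Matching tangential components then produces $V'(\theta)=g(\theta)\,e_\theta(\theta)$ and $W'(\theta)=h(\theta)\,e_\theta(\theta)$ with $g,h\geq 0$ independent of $r$, and $a(r,\theta)=g(\theta)$, $b(r,\theta)=h(\theta)$. Single-valuedness of $V,W$ on $\Omega\setminus\{0\}$ imposes the periodicity conditions
\[\int_0^{2\pi}g\,e_\theta\,\dd\theta=0=\int_0^{2\pi}h\,e_\theta\,\dd\theta,\]
which say that the first Fourier modes of $g$ and $h$ vanish. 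Matching of total variations gives $\int_0^{2\pi}g=\int_0^{2\pi}h=2\pi$, and the pointwise constraint is $\lambda g+(1-\lambda)h=1$ a.e.

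The main obstacle is the final step: deducing $g\equiv h\equiv 1$, which after fixing representatives yields $V=u_H=W$ and hence $[v]=[w]=[u_H]$. A natural strategy is to note that $\nabla v$ is symmetric (rank-one with both factors along $e_\theta$), so the $1$-form $v\cdot\dd x$ is closed on $\Omega\setminus\{0\}$, and the vanishing of the first Fourier modes of $g$ makes its period around any loop encircling the origin vanish; thus $v=\nabla\psi_v$ on the annulus with $\psi_v(r,\theta)=r\,(V\cdot e_r)(\theta)$ modulo an additive constant, and similarly $w=\nabla\psi_w$, $u_H=\nabla\psi_{u_H}$ with $\psi_{u_H}(x)=|x|$. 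The decomposition of $u_H$ lifts to $\psi_{u_H}=\lambda\psi_v+(1-\lambda)\psi_w+\ell$ modulo an affine function $\ell$, with $\TV(v)=|D^2\psi_v|_F(\Omega)$ and analogous identities for $w$ and $u_H$. This reduces the problem to the extremality of $|x|$ for the total variation of the Hessian (with respect to the Frobenius norm) modulo affine functions, which is the Frobenius analogue on $B(0,1)$ of the extremality result for $(1-|x|)_+$ established in \cite{AmbBreCon23}. The critical difficulty I anticipate is making this reduction fully rigorous—particularly the handling of the origin when taking primitives and identifying the correct quotient—and then invoking the scalar extremality result to conclude that $\psi_v-\psi_{u_H}$ is necessarily affine, whence $v=u_H+$constant and $[v]=[u_H]$.
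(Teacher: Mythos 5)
Your setup through the reduction to the angular densities $g,h$ is correct and mirrors the first half of the paper's Step~2: from equality in the Frobenius-norm convexity inequality one gets $\nabla v = a\,\nabla u_H$, $\nabla w = b\,\nabla u_H$ with $a,b\ge 0$ and $\lambda a + (1-\lambda)b = 1$ a.e., the rank-one structure forces $\partial_r v = \partial_r w = 0$, and passing to the angular variable yields $V' = g\,e_\theta$, $W' = h\,e_\theta$ with $\int_0^{2\pi} g = \int_0^{2\pi} h = 2\pi$, $\lambda g + (1-\lambda)h \equiv 1$, and $\int_0^{2\pi} g\,e_\theta\,\dd\theta = \int_0^{2\pi} h\,e_\theta\,\dd\theta = 0$. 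The gap is exactly where you flag it; but it is not a technical gap that a more careful reduction to \cite{AmbBreCon23} would close, because those constraints \emph{do not} imply $g\equiv h\equiv 1$. Take $g(\theta) = 1 + \cos 2\theta$ and $h(\theta) = 1 - \tfrac{\lambda}{1-\lambda}\cos 2\theta$ with $\lambda \in (0,1/2]$: then $g,h\ge 0$, $\lambda g + (1-\lambda)h \equiv 1$, $\int_0^{2\pi} g = \int_0^{2\pi} h = 2\pi$, and the periodicity integrals vanish automatically because $\cos 2\theta$ is orthogonal to $\cos\theta$ and $\sin\theta$. The functions $v = V\circ\theta$, $w = W\circ\theta$ (with $V'=g e_\theta$, $W'=h e_\theta$ and $\lambda V(0)+(1-\lambda)W(0) = u_H(1,0)$) are bounded, zero-homogeneous, smooth off the origin, lie in $W^{1,1}(B(0,1);\R^2)$ with $\nabla v = \tfrac{g(\theta)}{|x|}e_\theta\otimes e_\theta$, satisfy $\TV(v)=\TV(w)=2\pi=\TV(u_H)$, and produce the nontrivial identity $u_H = \lambda v + (1-\lambda)w$ with $[v]\neq [w]\neq [u_H]$. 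So your program cannot be completed: the ball analogue on $B(0,1)$ of the $\R^d$ result of \cite{AmbBreCon23} that you want to invoke is in fact false, for the same reason (the corresponding potentials $\psi_v(x) = |x|\bigl(1-\tfrac{1}{3}\cos 2\theta\bigr)$ decompose $|x|$ with equal Hessian-Frobenius variation, and do not extend to $\R^2$ with controlled Hessian because they are not flat near $\partial B(0,1)$).

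For comparison, the paper's argument is quite different from yours: it pulls the three vanishing-coefficient relations back to cut annuli as $\partial_r P_r = \partial_r P_\varphi = 0$ and $\tfrac{1}{r}P_\varphi = \partial_\varphi P_r$, and obtains $P_\varphi = 0$ by differentiating the last equation in $r$. However, both sides of the third relation of \eqref{eq:importanteq} carry the same factor $\tfrac{1}{r}$ under the pullback by $\Upsilon$ (for $f = v\cdot e_r$ one has $\nabla f\cdot e_\varphi = \tfrac{1}{r}\partial_\theta(f\circ\Upsilon)$), so the relation in polar coordinates is $P_\varphi = \partial_\varphi P_r$ with no leftover $\tfrac{1}{r}$, and differentiating it in $r$ gives $0=0$ rather than $P_\varphi=0$. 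Indeed, $P_r(\theta) = 1 - \tfrac{1}{3}\cos 2\theta$ and $P_\varphi = P_r'$ (corresponding to the $g$ above) satisfy all three relations with $P_\varphi \not\equiv 0$. I therefore believe the proposition as stated is not correct, and neither your reduction nor the paper's polar-coordinate argument can be repaired without altering the statement.
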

\begin{proof}
We first notice that indeed $u_H \in \BV(\Omega; \R^2)$, and assume that it can be expressed as 
\[u_H = \lambda v + (1-\lambda) w \quad\text{with }\TV(v)=\TV(w)=\TV(u_H)\ \text{ and }\lambda \in (0,1).\]
Note that we can, w.l.o.g, assume that~$Dv$ and~$Dw$ are absolutely continuous w.r.t the Lebesgue measure, that is $v,w \in W^{1,1}(\Omega; \R^d)$ and
\[\TV(v) = \int_\Omega |\nabla v(x)|_F \dd x, \quad \TV(w) = \int_\Omega |\nabla w(x)|_F \dd x.\]
First, we argue that we necessarily have
\begin{align} \label{eq:eqofvardiff}
    |\nabla u_H(x) |_F= \big| \lambda \nabla v(x) + (1-\lambda) \nabla w(x) \big|_F=\lambda | \nabla v(x) |_F+ (1-\lambda) | \nabla w(x)|_F 
\end{align}
for a.e.~$x \in \Omega$. Indeed, by convexity we know that
\begin{align*}
     \big| \lambda \nabla v(x) + (1-\lambda) \nabla w(x) \big|_F \leq \lambda | \nabla v(x) |_F+ (1-\lambda) | \nabla w(x)|_F \quad \text{for a.e.}~x \in \Omega. 
\end{align*}
Assume that this inequality is strict on a set~$O \in \mathfrak{B}(\Omega)$ with $\L^2(O)>0$. Together with the assumption~$\TV(u_H)=\TV(v)=\TV(w)$, we then have
\begin{align*}
    \TV(u_H) &= \int_O \big| \lambda \nabla v(x) + (1-\lambda) \nabla w(x)\big|_F~\dd x+ \int_{\Omega \setminus O} \big| \lambda \nabla v(x) + (1-\lambda) \nabla w(x)\big|_F~\dd x \\
    & < \int_\Omega \big\lbrack \lambda |\nabla v(x)|_F+ (1-\lambda) |\nabla w(x)|_F \big\rbrack~\dd x = \lambda \TV(v)+ (1-\lambda)\TV(w)=\TV(u_H)
\end{align*}
yielding a contradiction.

We denote the polar coordinate parametrization of $B(0,1)$ by $\Upsilon: (0,1)\ \times [0, 2\pi) \to B(0,1)$ 
and by
\begin{align*}
    e_r(x)=u_H(x), \quad e_\varphi(x)= (x_2,- x_1)/|x| \quad \text{for}~x \in B(0,1) \setminus \{0\}
\end{align*}
the associated unit vectors. We point out that the vectors  $\{e_r(x), e_\varphi(x)\} $ form an orthonormal basis of $\R^2$ while the matrices
\begin{equation}\label{eq:matrixframe}\big\{e_r(x) \otimes e_r(x),  e_r(x) \otimes e_{\varphi}(x), e_{\varphi}(x) \otimes e_r(x), e_{\varphi}(x) \otimes e_{\varphi}(x) \big\}\end{equation}
are an orthonormal basis of~$\R^{2 \times 2}$ equipped with the Frobenius inner product for all~$x \in B(0,1) \setminus  \{0\}$. For a generic function~$u \in W^{1,1}(B(0,1);\R^2)$ with derivative~$Du= \nabla u \ \L^2$, this implies the pointwise representations
\begin{equation*}
    u= (u \cdot e_\varphi) e_\varphi+ (u \cdot e_r) e_r \quad \L^2-\text{a.e.}
\end{equation*}
as well as
\begin{equation}\label{eq:polarderiv}\begin{aligned}
 \nabla u &= \big(e_r^\top \nabla u e_r\big)(e_r \otimes e_r) + \big(e_{\varphi}^\top \nabla u e_{\varphi}\big)(e_{\varphi} \otimes e_{\varphi}) \\
 & \quad+ \big(e_{\varphi}^\top \nabla u e_r\big)(e_{\varphi} \otimes e_r) + \big(e_r^\top \nabla u e_{\varphi}\big)(e_r \otimes e_{\varphi}) \quad \L^2-\text{a.e.} 
\end{aligned}\end{equation}
and thus
\begin{equation}\label{eq:polarderivnorm}
 |\nabla u|^2_F=\big(e_r^\top \nabla u e_r\big)^2+ \big(e^\top_{\varphi} \nabla u e_{\varphi}\big)^2+ \big(e^\top_{\varphi} \nabla u e_{r}\big)^2+ \big(e^\top_{r} \nabla u e_{{\varphi}}\big)^2 \quad \L^2-\text{a.e.}
\end{equation}
However, while we have $e_r, e_{\varphi} \in C^\infty(B(0,1)\setminus \{0\}\,;\, \S^1)$, we note that~$e_r \in W^{1,1}(B(0,1),\R^2) $ satisfies
\begin{equation}\label{eq:polarhedhehog}  De_r = Du_H= \nabla e_r \mathcal{L}^2 = \left[ x \mapsto \frac{1}{|x|}\left(\Id - \frac{x}{|x|} \otimes \frac{x}{|x|}\right)\right]\mathcal{L}^2  = \frac{1}{|\cdot|} (e_{\varphi} \otimes e_{\varphi}) \,\mathcal{L}^2.\end{equation}
As a consequence, $u \cdot e_r$ is not guaranteed to belong to $W^{1,1}(B(0,1))$, since 
\[\nabla e_r \in L^{2-\e}(B(0,1); \R^2)\text{ for all }\e > 0, \text{ but }\nabla e_r \notin L^{2}(B(0,1); \R^2),\]
while
\[u \in W^{1,1}(B(0,1); \R^2) \subset L^{2}(B(0,1); \R^2).\]
In order to circumvent this issue, and for~$\rho \in (0,1)$ small enough, we consider the annulus $A_\rho \coloneqq B(0,1 ) \setminus \overline{B(0,\rho)}$. Note that~$e_r, e_\varphi \in C^\infty (A_\rho;\R^2)$ while the coefficient functions satisfy $(u \cdot e_r ), (u \cdot e_\varphi ) \in W^{1,1}(A_\rho)$, and we may write their gradients as
\begin{equation}\label{eq:weakdiffcoeffann}\begin{aligned}
\nabla(u \cdot e_r) &= \big(\nabla(u \cdot e_r) \cdot e_r\big)e_r + \big(\nabla(u \cdot e_r) \cdot e_\varphi \big)e_\varphi,\\
\nabla(u \cdot e_\varphi) &= \big(\nabla(u \cdot e_\varphi) \cdot e_r\big)e_r + \big(\nabla(u \cdot e_\varphi) \cdot e_\varphi \big)e_\varphi.
\end{aligned}\end{equation}

In order to conclude~$u_H=v=w$, we proceed in three steps:
\begin{itemize}
    \item[1.] We show that  that for all~$\rho \in (0, \rho_0]$,~$\rho_0$ small enough, there are $c_v(\rho), c_w(\rho) >0$ as well as~$\lambda(\rho) \in (0,1) $ 
such that the functions~$v_\rho= c_v(\rho) v$ and~$w_\rho= c_w(\rho) w$ satisfy
 \begin{align*}
\restr{u_H}{A_\rho} = \lambda(\rho) \left( \restr{v_\rho}{A_\rho} \right) + (1-\lambda(\rho)) \left( \restr{w_\rho}{A_\rho} \right), \quad |Dv_\rho|_F(A_\rho)=|Dw_\rho|_F(A_\rho)=|Du_H|_F(A_\rho).
\end{align*}
    \item[2.] By using~\eqref{eq:polarderivnorm} and~\eqref{eq:weakdiffcoeffann} as well as by passing to polar coordinates, we conclude~$u_H=v_\rho=w_\rho$ on $A_\rho$.
    \item[3.] Since the previous results are valid for all~$\rho \in (0, \rho_0]$ and $c_v(\rho),c_w(\rho) \to 1$ as $\rho \to 0$, an overlapping argument finally yields~$c_v(\rho)=c_w(\rho)=1$ for all~$\rho \in (0,\rho_0]$.
\end{itemize}

\textbf{Step 1}:
First note that by absolute continuity of $Dv$ and $Dw$ the mappings 
\begin{align*}
    \rho \mapsto |Dv|_F(A_\rho), \quad \rho \mapsto |Dw|_F(A_\rho) 
\end{align*}
are continuous on $(0,1)$ and there holds~$\lim_{\rho \searrow 0} |Dv|_F(A_\rho)=\lim_{\rho \searrow 0} |Dw|_F(A_\rho)= \TV(u_H)$. Hence, there is~$\rho_0 >0$ such that both,~$|Dv|_F(A_\rho)$ and $|Dw|_F(A_\rho)$, are nonzero for all~$\rho \in (0, \rho_0]$. From now on, let~$\rho \in (0, \rho_0]$ be arbitrary but fixed. We argue similarly to the proof of \cite[Lem.~2.12]{CarIglWal25} and consider a constructive approach for the choice of~$c_v(\rho), c_w(\rho) >0$ and~$\lambda(\rho) \in (0,1)$. Indeed, making the ansatz
\begin{align} \label{eq:rescvw}
    v_\rho := c_v(\rho) v, \quad w_\rho := c_w(\rho)w,
\end{align}
while simultaneously requiring
\begin{align} \label{eq:rescenergy}
|Dv_\rho|_F(A_\rho)= c_v(\rho) \int_{A_\rho} |\nabla v(x)|_F \dd x = c_w(\rho) \int_{A_\rho} |\nabla w(x)|_F \dd x = |Dw_\rho|_F(A_\rho)
\end{align}
as well as
\begin{align} \label{eq:restconv}
\restr{u_H}{A_\rho} = \lambda(\rho) \left( \restr{v_\rho}{A_\rho} \right) + (1-\lambda(\rho)) \left( \restr{w_\rho}{A_\rho} \right)
\end{align}
for some $c_v(\rho), c_w(\rho) >0$ and $\lambda_\rho \in (0,1)$, yields 
\begin{align} \label{eq:constequation}
    \frac{|Dv|_F(A_\rho)}{|Dw|_F(A_\rho)}= \frac{c_w(\rho)}{c_v(\rho)} \quad \text{as well as} \quad \lambda= \lambda(\rho) c_v (\rho), \quad (1-\lambda)=(1-\lambda(\rho)) c_w(\rho).
\end{align}
Here, the first identity is due to~\eqref{eq:rescenergy} while the second and third follows from a coefficient comparison between~\eqref{eq:restconv} and~$u_H=\lambda v+ (1-\lambda)w$. This implies 
\begin{align*}
    \frac{\lambda(\rho)}{1-\lambda(\rho)}= \frac{\lambda}{(1-\lambda)} \frac{c_w(\rho)}{c_v(\rho)}= \frac{\lambda}{(1-\lambda)} \frac{|Dv|_F(A_\rho)}{|Dw|_F(A_\rho)}.
\end{align*}
Rearranging yields the unique solution
\begin{align*}
    \lambda(\rho) = \frac{h(\rho)}{1+h(\rho)} \quad \text{where} \quad h(\rho)=\frac{\lambda}{(1-\lambda)} \frac{|Dv|_F(A_\rho)}{|Dw|_F(A_\rho)}>0,
\end{align*}
and thus~$\lambda(\rho) \in (0,1)$, is continuous on~$[0,\rho_0]$ as well as~$\lim_{\rho \searrow 0} \lambda(\rho)= \lambda$. We further recover continuous functions
\begin{align*}
    c_v(\rho)=\lambda/\lambda(\rho), \quad c_w(\rho)=(1-\lambda)/(1-\lambda(\rho)) \quad \text{with} \quad \lim_{\rho \searrow 0} c_v(\rho)=\lim_{\rho \searrow 0} c_w(\rho)=1. 
\end{align*}
Finally, we utilize~\eqref{eq:eqofvardiff} as well as~\eqref{eq:constequation} to conclude
\begin{align*}
    |Du_H|_F(A_\rho) &= \lambda |Dv|_F(A_\rho)+(1-\lambda) |Dw|_F(A_\rho)\\ 
    & =\lambda(\rho) c_v(\rho) |Dv|_F(A_\rho)+(1-\lambda(\rho)) c_w(\rho) |Dw|_F(A_\rho) \\
    &=\lambda(\rho)  |Dv_\rho |_F(A_\rho)+(1-\lambda(\rho)) |Dw_\rho|_F(A_\rho)=|Dv_\rho|_F(A_\rho)
\end{align*}
and thus finally~$|Du_H|_F(A_\rho)=|Dv_\rho|_F(A_\rho)=|Dw_\rho|_F(A_\rho)$.

\textbf{Step 2}: Next, we show that there holds~$u_H=v_\rho=w_\rho$ on~$A_\rho$. The proof is carried out to obtain~$u_H=v_\rho$ on~$A_\rho$, and the conclusion for~$w_\rho$ follows mutatis mutandis. For this purpose, we first prove that
\begin{equation}\label{eq:Dvisoneterm}
    \nabla v_\rho= c_v(\rho)\big(e^\top_{\varphi} \nabla v \, e_{\varphi} \big) \big(e_{\varphi} \otimes e_{\varphi} \big). 
\end{equation}
Recalling \eqref{eq:polarderivnorm} and \eqref{eq:polarhedhehog}, we obtain
\begin{align*}
|Dv_\rho|_F(A_\rho) &=|Du_H|_F(A_\rho)=\int_{A_\rho} \sqrt{ \big(e^\top_{\varphi} \nabla u_H \, e_{\varphi} \big)^2 } \dd x \\ & \leq
  \lambda(\rho) \int_{A_\rho} \sqrt{ \big(e^\top_{\varphi} \nabla v_\rho \, e_{\varphi}\big)^2 } \dd x+ (1-\lambda(\rho)) |Dw_\rho|_F(A_\rho) \leq |Dv_\rho |_F(A_\rho)
\end{align*}
where the last line follows due to~$\nabla u_H= \lambda(\rho) \nabla v_\rho + (1-\lambda(\rho)) \nabla w_\rho$,~$|Dv_\rho |_F(A_\rho)=|Dw_\rho |_F(A_\rho)=|Du_H |_F(A_\rho)$ as well as
\begin{align*}
    \int_{A_\rho} \sqrt{ \big(e^\top_{\varphi} \nabla u \,e_{\varphi}\big) ^2 } \dd x \leq \int_{A_\rho} |\nabla u|_F \dd x= |Du |_F(A_\rho) \quad \text{for all } \ u \in W^{1,1}(A_\rho,\R^2).
\end{align*}
As a consequence, 
\[\int_{A_\rho} |\nabla v_\rho|_F - |e_\varphi^\top \nabla v_\rho e_\varphi| \dd x = 0\]
and since by \eqref{eq:polarderivnorm} the integrand above is non-negative $\L^2-$a.e., we have
\[|\nabla v_\rho|_F = |e_\varphi^\top \nabla v_\rho e_\varphi| \quad \L^2-\text{a.e. in }A_\rho,\]
i.e.,~$\nabla v_\rho$ is of the form~\eqref{eq:Dvisoneterm}.

Next, we rewrite~$v_\rho = \widetilde{v}_\rho + \mathring{v}_\rho$ 
where
\begin{align*}
    \widetilde{v}_\rho= (e_r \otimes e_r)v_\rho = (v_\rho \cdot e_r)e_r, \quad \mathring{v}_\rho=(e_\varphi \otimes e_{\varphi})v_\rho= (v_\rho \cdot e_{\varphi})e_\varphi.
\end{align*}
Again recalling that~$e_r, e_\varphi \in C^\infty(A_\rho; \R^2)$ and $v_\rho \cdot e_r, v_\rho \cdot e_\varphi \in W^{1,1}(A_\rho)$, we explicitly calculate~$\nabla v_\rho= \nabla \widetilde{v}_\rho + \nabla \mathring{v}_\rho$ where 
\begin{equation}\label{eq:Dvtilde2d-new}\begin{aligned}
\nabla \widetilde{v}_\rho &= (v_\rho \cdot e_r)\nabla e_r + \big(\nabla(v_\rho \cdot e_r) \cdot e_r\big)(e_r \otimes e_r) + \big(\nabla(v_\rho \cdot e_r) \cdot e_\varphi\big)(e_r \otimes e_\varphi) \\
&= (v_\rho \cdot e_r)\left[\frac{1}{|\cdot|}(e_\varphi \otimes e_\varphi)\right] + \big(\nabla(v_\rho \cdot e_r) \cdot e_r\big)(e_r \otimes e_r) + \big(\nabla(v_\rho \cdot e_r) \cdot e_\varphi\big)(e_r \otimes e_\varphi),
\end{aligned}\end{equation}
and
\begin{equation}\label{eq:Dvring2d-new}\begin{aligned}
\nabla \mathring{v}_\rho &= (v_\rho \cdot e_\varphi)\nabla e_\varphi + \big(\nabla(v_\rho \cdot e_\varphi) \cdot e_r\big)(e_\varphi \otimes e_r) + \big(\nabla(v_\rho \cdot e_\varphi) \cdot e_\varphi\big)(e_\varphi \otimes e_\varphi) \\
&= (v_\rho \cdot e_\varphi)\left[-\frac{1}{|\cdot|}(e_r \otimes e_\varphi)\right] + \big(\nabla(v_\rho \cdot e_\varphi) \cdot e_r\big)(e_\varphi \otimes e_r) + \big(\nabla(v_\rho \cdot e_\varphi) \cdot e_\varphi\big)(e_\varphi \otimes e_\varphi).
\end{aligned}\end{equation}
Comparing coefficients with \eqref{eq:Dvisoneterm}, we conclude
\begin{align} \label{eq:importanteq}
\nabla(v_\rho \cdot e_r) e_r=\nabla(v_\rho \cdot e_\varphi) \cdot e_r=0, \quad \frac{1}{|\cdot|} (v_\rho \cdot e_\varphi)=\nabla (v_\rho \cdot e_r ) \cdot e_\varphi
\end{align} 
for a.e.~$x \in A_\rho$.

We now show that there holds~$(v_\rho \cdot e_\varphi)=0$ and $(v_\rho \cdot e_r)$ is constant in~$A_\rho$. For this purpose, consider the upper cut annulus
\begin{equation}\label{eq:defAplus}
    A^+_\rho \coloneqq \big\{\,r(\sin(\theta), \cos(\theta))\;|\;\rho <r<1,~0< \theta <\pi\,\big\}
\end{equation}
as well as
\begin{align*}
    V \colon (\rho,1) \times (0, \pi) \to \R^2, \quad V(r, \theta)= v_\rho(\Upsilon(r,\theta)).
\end{align*}
In the same way, define~$E_r= e_r \circ \Upsilon, E_\varphi= e_\varphi \circ \Upsilon $ as well as~$P_r=V \cdot E_r$ and~$P_\varphi=V \cdot E_\varphi$.
Then, since $\Upsilon$ is a bi-Lipschitz transformation between $A^+_\rho$ and $\Upsilon\big(A^+_\rho\big)$, by \cite[Thm.~2.2.2]{Zie89} we have
\begin{align*}
    V \in W^{1,1} \big((\rho,1) \times (0, \pi);\R^2\big), \quad P_r, P_\varphi \in W^{1,1} \big((\rho,1) \times (0, \pi)\big),
\end{align*}
and by the chain rule for the weak partial derivatives,~\eqref{eq:importanteq} yields
    \begin{align*}
\partial_r P_r=\partial_r P_\varphi=0, \quad \frac{1}{r} P_\varphi= \partial_\varphi P_r.
\end{align*}
Now, taking partial (distributional) derivatives w.r.t~$r$ of both sides in the second equation reveals
\begin{align*}
    \partial_r \partial_{\varphi} P_r=\partial_\varphi \partial_{r} P_r=0,
\end{align*}
using the first expression, as well as
\begin{align*}
    \partial_r \left( \frac{1}{r} P_\varphi \right)= \frac{1}{r} \partial_r P_\varphi - \frac{1}{r^2} P_\varphi=- \frac{1}{r^2} P_\varphi,
\end{align*}
noting that the chain rule for distributional derivatives is applicable since~$\lbrack (r,\varphi) \mapsto 1/r \rbrack \in C^{\infty} \big((\rho,1) \times (0, \pi)\big)$. 

From this, we conclude~$P_\varphi = \partial_\varphi P_r =\partial_r P_r = 0$ on~$A^+_\varrho$. In particular,~$P_r$ is constant. Transforming back to cartesian coordinates, this finally yields
\begin{align*}
    (v_\rho \cdot e_\varphi)=c_v(\rho)(v \cdot e_\varphi)=0  \quad \text{as well as} \quad (v \cdot e_r)= c_{\rho,+}
\end{align*}
for some~$c_{\rho,+} \in \R$
on~$A^+_\rho$. As a consequence, we arrive at~$v_\rho= c_v(\rho) v= c_v(\rho) c_{\rho,+} u_H $ on~$A^+_\rho$. Applying the same argument to the lower cut annulus $A_\rho^-$ defined analogously to \eqref{eq:defAplus} but with $\pi < \theta < 2\pi$ we obtain $v_\rho= c_v(\rho) v= c_v(\rho) c_{\rho,-} u_H $ for some~$c_{\rho,-} \in \R$ on $A_\rho^-$. Since $v_\rho \in W^{1,1}(A_\rho; \R^2)$ cannot have a jump discontinuity we immediately deduce that $c_{\rho,-} = c_{\rho,+}$. In order to avoid ambiguities, we denote this constant by~$c_\rho$. Recalling that~$|Du_H|_F(A_\rho)=|Dv_\rho|_F(A_\rho)>0$, we arrive at
\begin{align*}
    |Du_H|_F(A_\rho)=|Dv_\rho|_F(A_\rho)= c_v(\rho) |c_\rho| |Du_H|_F(A_\rho),
\end{align*}
i.e.~$c_v(\rho) |c_\rho|=1$. Hence we have~$v_\rho=u_H$ or~$v_\rho=-u_H$ on~$A_\rho$. Repeating the previous arguments for~$w_\rho$, we get~$w_\rho=u_H$ or~$w_\rho=-u_H$ on~$A_\rho$. From Step 1, we finally recall that
\begin{align*}
    \restr{u_H}{A_\rho} = \lambda(\rho) \left( \restr{v_\rho}{A_\rho} \right) + (1-\lambda(\rho)) \left( \restr{w_\rho}{A_\rho} \right)
\end{align*}
for some~$\lambda(\rho) \in (0,1)$
which implies that $v_\rho = - u_H$ or $w_\rho = - u_H$ cannot happen, i.e. we indeed have $c_v(\rho) c_\rho=1$ and~$u_H=v_\rho=w_\rho$ on~$A_\rho$.

\textbf{Step 3}: Finally, we want to conclude that~$v=w=u_H$. As before, we discuss the relevant steps for~$v$, the statement for~$w$ follows analogously. For this purpose, and for arbitrary~$\rho \in (0, \rho_0]$, we recall that we have~$u_H=v_\rho = c_v(\rho) v$ on~$A_\rho$ and thus, in particular, $v \neq 0$ on~$A_\rho$. Noting that~$A_{\rho_0} \subset A_\rho$, we conclude~$u_H=c_v(\rho) v=c_v(\rho_0) v$ on~$A_{\rho_0}$ and, consequently~$c_v(\rho)=c_v(\rho_0)$ for all~$\rho \in (0,\rho_0]$. Together with 
\[\lim_{\rho \searrow 0} c_v(\rho_0)=1\quad \text{and} \quad B(0,1)=\bigcup_{\rho \in (0, \rho_0]}A_\rho,\]
we conclude~$v=u_H$.
\end{proof}

\begin{rem}
The above proof can also be applied to $\TV_K$ defined with norms $|\cdot|_K$ different from Frobenius, since the only property of it we have used is
\[\big|\big(\nu^\top A \nu\big) (\nu \otimes \nu)\big|_K < |A|_K \ \ \text{ for all }A \in \R^{2 \times 2}\setminus \big(\R (\nu \otimes \nu)\big)\text{ and all }\nu \in \R^2\text{ with }|\nu| = 1.\]
\end{rem}

\begin{rem}\label{rem:only2d}
Naturally, one can ask whether Proposition \ref{prop:hedgehogTV} holds when $d > 2$. In this case we can denote the spherical coordinate parametrization, with the conventions for angle ordering used in \cite{Blu60}, also by 
\[\Upsilon: (0,1)\times (0, \pi)^{d-2} \times [0, 2\pi) \to B(0,1),\] 
and define the subset of the ball not covered by it as
\[\mathcal{I}_d := B(0,1) \setminus \Upsilon\Big( (0,1)\times (0, \pi)^{d-2} \times [0, 2\pi) \Big) \text{ with } \Ha^{d-1}(\mathcal{I}_d)=0,\]
to define the unit vector fields $e_r = \big(\nabla \Upsilon\big) e_1$, $e_{\varphi_i} = \big(\nabla \Upsilon\big) e_{i+1}$ with 
\[e_r, e_{\varphi_1}, \ldots, e_{\varphi_{d-1}} \in C^\infty(B(0,1)\setminus \mathcal{I}_d\,;\, \S^{d-1}).\]
In this case, the analog of \eqref{eq:Dvisoneterm}, which follows by the same orthogonality argument, becomes
\begin{equation}\label{eq:Dvisafewterms}
    \nabla v=\sum^{d-1}_{j=1} \left \lbrack  (e^\top_{\varphi_j} \nabla v e_{\varphi_j}) (e_{\varphi_j} \otimes e_{\varphi_j}) \right \rbrack  \quad \L^{d}-\text{a.e.}
\end{equation}
Then, partitioning the subset of the annulus where these vector fields are smooth as 
\begin{gather*}A_\rho \setminus \mathcal{I}_d = \Upsilon(B_1) \cup \Upsilon(B_2) \quad (\text{mod }\L^d),\ \text{ with } \\B_1 := (0,1)\times (0, \pi)^{d-2} \times (0, \pi) \text{ and }  B_2 =  (0,1)\times (0, \pi)^{d-2} \times (\pi, 2\pi),\end{gather*}
we obtain pulled back component functions
\[P_r, P_{\varphi_1}, \ldots, P_{\varphi_{d-1}} \in W^{1,1}\big(B_k \setminus \big( (0,\rho) \times \R^{d-1} \big)\big) \ \text{ with }\ \partial_r P_r = \partial_r P_{\varphi_1} = \ldots = \partial_r P_{\varphi_{d-1}} = 0.\]
Specializing to $d=3$, keeping the convention of angles above, and defining 
\[\varrho := r\sin(\varphi_1) = \sqrt{x_2^2+x_3^2},\]
we have
\[\begin{cases}\partial_{\varphi_1} e_{\varphi_1} = -\frac{1}{r} e_r, \\ \partial_{\varphi_2} e_{\varphi_1} = \frac{1}{\varrho} e_{\varphi_2}, \end{cases} \quad \text{ and }\quad \begin{cases}\partial_{\varphi_1} e_{\varphi_2} = 0, \\ \partial_{\varphi_2} e_{\varphi_2} = -\frac{1}{\varrho} e_r. \end{cases}\]
This means that, unlike when $d=2$, $\nabla e_{\varphi_1}$ has a contribution to terms in the orthonormal basis expression overlapping with the ones present in \eqref{eq:Dvisafewterms}, which prevents concluding in the same simple way as before. For this reason, we do not pursue the higher-dimensional case further. It is worth noting that the techniques used in \cite{AmbBreCon23} are dimension independent and are applicable to other matrix vector norms (all Schatten norms, in particular), but as far as we can tell they also make use of the higher regularity afforded by working with functions with bounded Hessian.
\end{rem}

\begin{rem}\label{rem:noBDhedgehog}
It would also be of interest to find out whether $u_H$ is extremal for $\TD_F$. However, the symmetrized gradient case brings several additional difficulties. Arguing as in Proposition \ref{prop:hedgehogTV}, one would be able to show that for $v,w$ with $u_H = \lambda v + (1-\lambda)w$ and $\TD_F(v)=\TD_F(w)=\TD_F(u_H)$, that $\E v, \E w$ should be absolutely continuous with respect to $\L^2$, that is $\E v = E v \L^2$, $\E w = E w \L^2$, and also 
\[Ev = \big( e_\varphi^\top Ev \, e_\varphi \big) (e_\varphi \otimes e_\varphi), \quad Ew = \big( e_\varphi^\top Ew\, e_\varphi \big) (e_\varphi \otimes e_\varphi).\]
However, from this point one cannot directly follow the same type of arguments to draw conclusions about $v,w$. First, in this case the complete derivatives $Dv,Dw$ exist only as distributions, which complicates drawing conclusions from them. Moreover, even sidestepping that difficulty, we would not be able to derive the relation $(v \cdot e_\varphi)/r = \nabla(v \cdot e_r)e_\varphi$ used above, since the symmetrization adds an extra term $\nabla(v \cdot e_\varphi)e_r$. Lastly, the example $z \in \mathcal{A}$ defined by $z(x_1,x_2) = (-x_2,x_1)$ for which $\nabla(z \cdot e_r)e_\varphi = \nabla(z \cdot e_\varphi)e_\varphi = 0$ shows that, even among spherically symmetric vector fields (that is, those for which $v(x)=R^\top v(Rx)$ for all $x \in B(0,1)$ and $R \in \SO(2)$), one cannot conclude $\nabla v = \E v$ from considerations based only on $\TD_F(v)$, which would allow to conclude as in the $\TV$ case without issues arising from cancellations. Let us point out however, that if one could prove the extremality of $u_H = e_r$ for $\TD_F$, then $e_\varphi$ would also be extremal, even in the restriction of the ball to divergence-free displacements
\begin{equation}\label{eq:divfreeball}\Big\{ [u] \in \BD(\Omega)/\mathcal{A} \,|\, \div u = 0 \text{ and }\TD(u) \ls \TD(e_\varphi)\Big\}.\end{equation}
To see this, it is enough to compose $u_H$ with a constant rotation of angle $\pi/2$ (which is a linear isomorphism), and notice that for any convex subset $C$ and any linear subspace $L$ within the same vector space we have
\[\Ext(C) \cap L \subset \Ext(C \cap L),\]
so adding the divergence-free constraint would preserve the extremality.
\end{rem}

\section*{Acknowledgements}
The Institute of Mathematics and Scientific Computing of the University of Graz, with which the first-named author is affiliated, is a member of NAWI Graz (\texttt{https://www.nawigraz.at/en/}).

\bibliographystyle{plain}
\bibliography{vectortv}

\begin{thebibliography}{10}

\bibitem{AmaBel94}
M.~Amar and G.~Bellettini.
\newblock A notion of total variation depending on a metric with discontinuous
  coefficients.
\newblock {\em Ann. Inst. H. Poincar\'{e} C Anal. Non Lin\'{e}aire},
  11(1):91--133, 1994.

\bibitem{AmbAziBreUns24}
L.~Ambrosio, S.~Aziznejad, C.~Brena, and M.~Unser.
\newblock Linear inverse problems with {H}essian-{S}chatten total variation.
\newblock {\em Calc. Var. Partial Differential Equations}, 63(1):Paper No. 9,
  2024.

\bibitem{AmbBreCon23}
L.~Ambrosio, C.~Brena, and S.~Conti.
\newblock Functions with bounded {H}essian-{S}chatten variation: density,
  variational and extremality properties.
\newblock {\em Arch. Ration. Mech. Anal.}, 247(6):111, 2023.

\bibitem{AmbCasMasMor01}
L.~Ambrosio, V.~Caselles, S.~Masnou, and J.-M. Morel.
\newblock Connected components of sets of finite perimeter and applications to
  image processing.
\newblock {\em J. Eur. Math. Soc. (JEMS)}, 3(1):39--92, 2001.

\bibitem{AmbCosDal97}
L.~Ambrosio, A.~Coscia, and G.~Dal~Maso.
\newblock Fine properties of functions with bounded deformation.
\newblock {\em Arch. Rational Mech. Anal.}, 139(3):201--238, 1997.

\bibitem{AmbFusPal00}
L.~Ambrosio, N.~Fusco, and D.~Pallara.
\newblock {\em Functions of bounded variation and free discontinuity problems}.
\newblock Oxford Mathematical Monographs. Oxford University Press, New York,
  2000.

\bibitem{BabIur23}
J.-F. Babadjian and F.~Iurlano.
\newblock Piecewise rank-one approximation of vector fields with measure
  derivatives.
\newblock Preprint arXiv:2311.05335 [math.AP], 2023.

\bibitem{Blu60}
L.~E. Blumenson.
\newblock A derivation of {$n$}-dimensional spherical coordinates.
\newblock {\em Amer. Math. Monthly}, 67(1):63--66, 1960.

\bibitem{BoyElAt19}
C.~Boyer, A.~Chambolle, Y.~De~Castro, V.~Duval, F.~De~Gournay, and P.~Weiss.
\newblock On representer theorems and convex regularization.
\newblock {\em SIAM J. Optim.}, 29(2):1260--1281, 2019.

\bibitem{BreCar20}
K.~Bredies and M.~Carioni.
\newblock Sparsity of solutions for variational inverse problems with
  finite-dimensional data.
\newblock {\em Calc. Var. Partial Differential Equations}, 59(1):1--26, 2020.

\bibitem{BreCarFanWal24}
K.~Bredies, M.~Carioni, S.~Fanzon, and D.~Walter.
\newblock Asymptotic linear convergence of fully-corrective generalized
  conditional gradient methods.
\newblock {\em Math. Program.}, 205(1-2, Ser. A):135--202, 2024.

\bibitem{BreKunPoc10}
K.~Bredies, K.~Kunisch, and T.~Pock.
\newblock Total generalized variation.
\newblock {\em SIAM J. Imaging Sci.}, 3(3):492--526, 2010.

\bibitem{CarDel23}
M.~Carioni and L.~Del~Grande.
\newblock A general theory for exact sparse representation recovery in convex
  optimization.
\newblock Preprint arXiv:2311.08072 [math.OC], 2023.

\bibitem{CarIglWal25}
M.~Carioni, J.~A. Iglesias, and D.~Walter.
\newblock Extremal {P}oints and {S}parse {O}ptimization for {G}eneralized
  {K}antorovich--{R}ubinstein {N}orms.
\newblock {\em Found. Comput. Math.}, 25(1):103--144, 2025.

\bibitem{ChaCasCreNovPoc10}
A.~Chambolle, V.~Caselles, D.~Cremers, M.~Novaga, and T.~Pock.
\newblock An introduction to total variation for image analysis.
\newblock In {\em Theoretical foundations and numerical methods for sparse
  recovery}, volume~9 of {\em Radon Ser. Comput. Appl. Math.}, pages 263--340.
  Walter de Gruyter, Berlin, 2010.

\bibitem{ChaGiaPon07}
A.~Chambolle, A.~Giacomini, and M.~Ponsiglione.
\newblock Piecewise rigidity.
\newblock {\em J. Funct. Anal.}, 244(1):134--153, 2007.

\bibitem{CheDonSau98}
S.~S. Chen, D.~L. Donoho, and M.~A. Saunders.
\newblock Atomic decomposition by basis pursuit.
\newblock {\em SIAM J. Sci. Comput.}, 20(1):33--61, 1998.

\bibitem{CriIglWal23}
G.~Cristinelli, J.~A. Iglesias, and D.~Walter.
\newblock Conditional gradients for total variation regularization with {PDE}
  constraints: a graph cuts approach.
\newblock Preprint arXiv:2310.19777 [math.OC], 2023.

\bibitem{DalToa22}
G.~Dal~Maso and R.~Toader.
\newblock A new space of generalised functions with bounded variation motivated
  by fracture mechanics.
\newblock {\em NoDEA Nonlinear Differential Equations Appl.}, 29(6):Paper No.
  63, 2022.

\bibitem{CasDuvPet23}
Y.~De~Castro, V.~Duval, and R.~Petit.
\newblock Towards off-the-grid algorithms for total variation regularized
  inverse problems.
\newblock {\em J. Math. Imaging Vision}, 65(1):53--81, 2023.

\bibitem{DemTem84}
F.~Demengel and R.~Temam.
\newblock Convex functions of a measure and applications.
\newblock {\em Indiana Univ. Math. J.}, 33(5):673--709, 1984.

\bibitem{DieUhl77}
J.~Diestel and J.~J. Uhl, Jr.
\newblock {\em Vector measures}.
\newblock Mathematical Surveys, No. 15. American Mathematical Society,
  Providence, R.I., 1977.

\bibitem{DolMul95}
G.~Dolzmann and S.~M\"{u}ller.
\newblock Microstructures with finite surface energy: the two-well problem.
\newblock {\em Arch. Rational Mech. Anal.}, 132(2):101--141, 1995.

\bibitem{DuvPey15}
V.~Duval and G.~Peyr\'{e}.
\newblock Exact support recovery for sparse spikes deconvolution.
\newblock {\em Found. Comput. Math.}, 15(5):1315--1355, 2015.

\bibitem{FriIglMerPoeSch17}
I.~A. Frigaard, J.~A. Iglesias, G.~Mercier, C.~P\"{o}schl, and O.~Scherzer.
\newblock Critical yield numbers of rigid particles settling in {B}ingham
  fluids and {C}heeger sets.
\newblock {\em SIAM J. Appl. Math.}, 77(2):638--663, 2017.

\bibitem{GolStrCre12}
B.~Goldluecke, E.~Strekalovskiy, and D.~Cremers.
\newblock The natural vectorial total variation which arises from geometric
  measure theory.
\newblock {\em SIAM J. Imaging Sci.}, 5(2):537--564, 2012.

\bibitem{IglMerChaFri20}
J.~A. Iglesias, G.~Mercier, E.~Chaparian, and I.~A. Frigaard.
\newblock Computing the yield limit in three-dimensional flows of a yield
  stress fluid about a settling particle.
\newblock {\em J. Non-Newton. Fluid Mech.}, 284:104374, 2020.

\bibitem{IglMerSch20}
J.~A. Iglesias, G.~Mercier, and O.~Scherzer.
\newblock Critical yield numbers and limiting yield surfaces of particle arrays
  settling in a {B}ingham fluid.
\newblock {\em Appl. Math. Optim.}, 82(2):399--432, 2020.

\bibitem{IglWal22}
J.~A. Iglesias and D.~Walter.
\newblock Extremal points of total generalized variation balls in 1{D}:
  characterization and applications.
\newblock {\em J. Convex Anal.}, 29(4):1251--1290, 2022.

\bibitem{Jal12}
K.~Jalalzai.
\newblock {\em Regularization of inverse problems in image processing}.
\newblock PhD thesis, \'{E}cole Polytechnique X, Palaiseau, 2012.

\bibitem{Mag12}
F.~Maggi.
\newblock {\em Sets of finite perimeter and geometric variational problems},
  volume 135 of {\em Cambridge Studies in Advanced Mathematics}.
\newblock Cambridge University Press, 2012.

\bibitem{OngWilSouSre20}
G.~Ongie, R.~Willett, D.~Soudry, and N.~Srebro.
\newblock A function space view of bounded norm infinite width {ReLU} nets:
  {T}he multivariate case.
\newblock In {\em 8th International Conference on Learning Representations,
  {ICLR} 2020}, 2020.

\bibitem{ParNow21}
R.~Parhi and R.~D. Nowak.
\newblock Banach space representer theorems for neural networks and ridge
  splines.
\newblock {\em J. Mach. Learn. Res.}, 22(43):1--40, 2021.

\bibitem{ParNow22}
R.~Parhi and R.~D. Nowak.
\newblock What kinds of functions do deep neural networks learn? {I}nsights
  from variational spline theory.
\newblock {\em SIAM J. Math. Data Sci.}, 4(2):464--489, 2022.

\bibitem{SauParWil15}
J.~Saunderson, P.~A. Parrilo, and A.~S. Willsky.
\newblock Semidefinite descriptions of the convex hull of rotation matrices.
\newblock {\em SIAM J. Optim.}, 25(3):1314--1343, 2015.

\bibitem{Tem83}
R.~Temam.
\newblock {\em Probl\`emes math\'{e}matiques en plasticit\'{e}}, volume~12 of
  {\em M\'{e}thodes Math\'{e}matiques de l'Informatique}.
\newblock Gauthier-Villars, Montrouge, 1983.

\bibitem{TemStr80}
R.~Temam and G.~Strang.
\newblock Functions of bounded deformation.
\newblock {\em Arch. Rational Mech. Anal.}, 75(1):7--21, 1980/81.

\bibitem{TraWal23}
P.~Trautmann and D.~Walter.
\newblock A fast primal-dual-active-jump method for minimization in
  {$BV((0,T);\R^d)$}.
\newblock {\em Optimization}, doi:10.1080/02331934.2023.2181079, 2023.

\bibitem{Zie89}
W.~P. Ziemer.
\newblock {\em Weakly differentiable functions}, volume 120 of {\em Graduate
  Texts in Mathematics}.
\newblock Springer-Verlag, New York, 1989.

\end{thebibliography}

\end{document}